\documentclass[10pt]{amsart}

\usepackage{hyperref}

\oddsidemargin .15in
\evensidemargin .15in
\textwidth 6in

\usepackage{amssymb}
\usepackage{amsmath}
\usepackage{graphicx}
\usepackage{tikz}
\usepackage{amsthm}

\bibliographystyle{amsalpha}
\baselineskip=24pt

\setcounter{secnumdepth}{4}

\let\newpf\proof \let\proof\relax

\def\DC{{\mathrm{DC}}}

\def\bm{\begin{matrix}}
\def\em{\end{matrix}}

\newcommand{\bt}{\begin{thm}}
\newcommand{\et}{\end{thm}}

\newcommand{\bl}{\begin{lemma}}
\newcommand{\el}{\end{lemma}}

\newcommand{\beq}{\begin{eqnarray}}
\newcommand{\eeq}{\end{eqnarray}}

\newtheorem{thm}{Theorem}[section]

\newtheorem{lemma}[thm]{Lemma}

\theoremstyle{remark}
\newtheorem{rem}{Remark}[section]

\numberwithin{equation}{section}

\def \bn {\hfill \\ \smallskip\noindent}

\theoremstyle{definition}

\newtheorem{definition}{Definition}[section]
\def\proof{\bn {\bf Proof.} }

\def\note#1
{\marginpar

{\tiny $\leftarrow$
\par
\hfuzz=20pt \hbadness=9000 \hyphenpenalty=-100 \exhyphenpenalty=-100
\pretolerance=-1 \tolerance=9999 \doublehyphendemerits=-100000
\finalhyphendemerits=-100000 \baselineskip=6pt
#1}\hfuzz=1pt}

\renewcommand{\mod}{\operatorname{mod}}

\newcommand{\C}{{\mathbb C}}

\newcommand{\N}{{\mathbb N}}
\newcommand{\Q}{{\mathbb Q}}
\newcommand{\R}{{\mathbb R}}
\newcommand{\T}{{\mathbb T}}

\newcommand{\Z}{{\mathbb Z}}

\def\B0{{\bold{0}}}

\catcode`\@=12

\def\Empty{}
\newcommand\oplabel[1]{
  \def\OpArg{#1} \ifx \OpArg\Empty {} \else
  	\label{#1}
  \fi}

\newcommand{\comm}[1]{}
\newcommand{\comment}[1]{}

\begin{document}

\title{Dry Ten Martini problem for the non-self-dual extended Harper's model}
\begin{abstract}
In this paper we prove the dry version of the Ten Martini problem: Cantor spectrum with all gaps open, for the extended Harper's model in the non self-dual region for Diophantine frequencies.
\end{abstract}
\author{Rui Han}
\address{Department of Mathematics, University of California, Irvine, CA, 92697-3875, United States
of America}
\email{rhan2@uci.edu}
\setcounter{tocdepth}{1}

\maketitle

\section{Introduction}

The study of independent electrons on a two-dimensional lattice exposed to a perpendicular magnetic field and periodic potentials can be reduced via an appropriate choice of gauge field to the study of discrete one-dimensional quasiperiodic Jacobi matrices.
The most extensively studied case is the almost Mathieu operator (AMO) acting on $l^2(\Z)$ defined by
\begin{align*}
(H_{\lambda, \alpha, \theta}u)_n=u_{n+1}+u_{n-1}+2\lambda \cos{2\pi (\theta+n\alpha)} u_n.
\end{align*}
This is a one-dimensional tight-binding model with anisotropic nearest neighbor couplings in general.
A more general model, called the extended Harper's model (EHM), is the operator acting on $l^2(\Z)$ defined by:
\begin{align*}
(H_{\lambda, \alpha,\theta}u)_n=c(\theta+n\alpha)u_{n+1}+\tilde{c}(\theta+(n-1)\alpha)u_{n-1}+2\cos{2\pi(\theta+n\alpha)}u_{n}.
\end{align*}
where $c(\theta)=\lambda_1 e^{-2\pi i(\theta+\frac{\alpha}{2})}+\lambda_2+\lambda_3 e^{2\pi i(\theta+\frac{\alpha}{2})}$ and
$\tilde{c}(\theta)=\lambda_1 e^{2\pi i (\theta+\frac{\alpha}{2})}+\lambda_2+\lambda_3 e^{-2\pi i (\theta+\frac{\alpha}{2})}$.
It is obtained when both the nearest neighbor coupling (expressed through $\lambda_2$) and the next-nearest couplings (expressed through $\lambda_1$ and $\lambda_3$) are included.
This model includes AMO as a special case (when $\lambda_1=\lambda_3=0$). 

For the AMO, it was proved in \cite{AJ1} that the spectrum is a Cantor set for any $\alpha\in \R\setminus \Q$ and $\lambda\neq 0$.
This is the {\it Ten Martini Problem} dubbed by Barry Simon, after an offer of Mark Kac.
A much more difficult problem, known as the dry version of the Ten Martini Problem, is to prove that the spectrum is not only a Cantor set, but that all gaps predicted by the Gap-Labelling theorem \cite{BLT}, \cite{JM} are open.
The first result was obtained for Liouvillean $\alpha$ \cite{CEY}, and later it was proved for a set of $(\lambda, \alpha)$ of positive Lebesgue measure \cite{Puig}.
The most recent result is \cite{AJ2}, in which they were able to deal with all Diophantine frequencies and $\lambda\neq 1$.
A solution for all irrational frequencies and $\lambda\neq 1$ was also recently announced in \cite{AYZ}.

Recently, there have been several important advances on the spectral theory of the EHM: purely point spectrum for Diophantine $\alpha$ and a.e.$\theta$ in the positive Lyapunov exponent region \cite{JKS}; the exact formula for Lyapunov exponent for all coupling constants \cite{Jm}; the spectral decomposition for a.e.$\alpha$ \cite{AJM}. However the results that study the spectrum as a set have not been obtained for the EHM.

For EHM, depending on the values of the parameters $\lambda_1, \lambda_2, \lambda_3$, we could divide the parameter space into three regions as shown in the picture below:
\begin{center}
\begin{tikzpicture}[thick, scale=1]

    \draw[->] (-10,-1) -- (-3,-1) node[below] {$\lambda_2$};
    \draw[->] (-10,-1) -- (-10,6) node[right] {$\lambda_1+\lambda_3$};
    \draw [ ] plot [smooth] coordinates { (- 7, 2) (-4, 5) };
    \draw [ ] plot [smooth] coordinates { (- 7, 2) (-7,-1) };
    \draw [ ] plot [smooth] coordinates { (-10, 2) (-7, 2) };

    \draw(-  4,   5) node [above] {$\lambda_1+\lambda_3=\lambda_2$};
    \draw(-  7,  -1) node [below] {$1$};
    \draw(- 10,   2) node [left]  {$1$};
    \draw(-9.2, 0.5) node [color=blue][right] {region I};
    \draw(-  5,   1) node [color=blue][right] {region II};
    \draw(-9.2, 3.6) node [color=blue][right] {region III};

    \draw(-  7, 0.5) node [color=red][right] {$L_{II}$};
    \draw(-8.5,   2) node [color=red][above] {$L_{I}$};
    \draw(-5.5, 3.7) node [color=red][left] {$L_{III}$};

\end{tikzpicture}
\end{center}

\begin{align*}
&region\ I    : 0<\max{(\lambda_1+\lambda_3,\lambda_2)}<1,\\
&region\ II   : 0<\max{(\lambda_1+\lambda_3, 1)} < \lambda_2,\\
&region\ III  : 0<\max{(1, \lambda_2)} < \lambda_1+\lambda_3.
\end{align*}
According to the action of the duality transformation $\sigma: \lambda=(\lambda_1, \lambda_2, \lambda_3)\rightarrow \hat{\lambda}=(\frac{\lambda_3}{\lambda_2}, \frac{1}{\lambda_2}, \frac{\lambda_1}{\lambda_2})$, region I and region II are dual to each other and region III is a self-dual region.
Region I is the positive Lyapunov exponent region, which is a natural extension of the segment $\{\lambda_1+\lambda_3=0, 0<\lambda_2<1\}$ corresponding to the case $\lambda>1$ in the AMO. 
Region II is the subcritical region, which is an extension of the segment $\{\lambda_1+\lambda_3=0, 1<\lambda_2\}$ corresponding to the case $\lambda<1$ in the AMO.

In this paper we prove the dry version of the Ten Martini Problem in region I and region II under the Diophantine condition. 

Let ${p_n}/{q_n}$ be the continued fraction appoximants of $\alpha\in {\R}\setminus {\Q}$. Let
\begin{align*}
\beta(\alpha)=\limsup_{n\rightarrow\infty} \frac{\ln{q_{n+1}}}{q_n}.
\end{align*}
If $\beta(\alpha)=0$, we say $\alpha$ satisfies the Diophantine condition, denoted by $\alpha\in \mathrm{DC}$. It is easily seen that such $\alpha$ form a full measure subset of $\T$.

It is known that when $E$ is in the closure of a spectral gap, the integrated density of states (IDS) $N(E)\in \alpha \Z+\Z$ (refer to ($\ref{IDS}$) for the definition of IDS) \cite{BLT}, \cite{JM}. Here we prove the inverse is true.
\begin{thm}\label{dry}
If $\alpha\in \mathrm{DC}$ and $\lambda$ belongs to region I or region II, all possible spectral gaps are open.
\end{thm}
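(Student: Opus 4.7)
My plan is to adapt the Avila--Jitomirskaya approach to the dry Ten Martini problem \cite{AJ2} from the almost Mathieu operator to the Jacobi cocycle of EHM. The starting point is that the duality $\sigma$ interchanges Region~I and Region~II, sends spectrum to spectrum, and identifies the corresponding gap labels; it therefore suffices to prove the theorem in Region~I, where the results of \cite{JKS, Jm} provide the necessary input --- strictly positive Lyapunov exponent on the spectrum and, for $\alpha\in\mathrm{DC}$, Anderson localization for a.e.\ $\theta$ with exponentially decaying eigenfunctions.

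Fix a possible gap label $k\in\Z$, i.e.\ an integer with $N(E)=k\alpha\bmod 1$ for $E$ in the closure of that gap, and suppose for contradiction that the gap is collapsed at an energy $E_0$ in the spectrum. Pick a non-resonant phase $\theta_0$ at which Anderson localization holds for the Region~I operator, and let $\varphi$ be an exponentially decaying eigenfunction at $E_0$. Feeding $\varphi$ into Aubry duality produces a real-analytic quasiperiodic Bloch wave $\hat\varphi$ at the same energy for the dual Jacobi operator in Region~II. Following \cite{AJ2}, the key qualitative feature to extract is that $\hat\varphi$ factors as $e^{2\pi ik\theta}g(\theta)$ with $g$ analytic and nowhere vanishing; this is what encodes the gap label $k$ and drives the final contradiction.

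Using $\hat\varphi$ I would conjugate the Jacobi transfer cocycle at $E_0$ to a constant parabolic cocycle, and then carry out a Moser--P\"oschel second-order expansion of the trace of the $q_n$-periodic monodromy matrix in the variable $E-E_0$. The leading nonzero coefficient should reduce to a Chambers--Cauchy-type integral involving $\hat\varphi$, $c$ and $\tilde c$, and non-vanishing of $\hat\varphi$ should force this coefficient to be of a definite sign, which is incompatible with $E_0$ being a gap collapse --- so the gap must be open.

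The main technical obstacle will be running the Moser--P\"oschel calculation in the Jacobi setting of EHM: unlike the Schr\"odinger AMO case, the off-diagonal couplings $c(\theta)$ and $\tilde c(\theta)$ are non-constant and, near the boundary lines $L_I$ and $L_{II}$, $|c|$ can become small on the circle. Both the duality step (to guarantee that the dual Bloch wave is genuinely real-analytic and nowhere vanishing on $\T$) and the Chambers-style integral representation of the perturbation coefficient must be recast in a form tolerant of these degenerations intrinsic to EHM; a careful choice of the non-resonant phase $\theta_0$ will likewise be needed to prevent small denominators from spoiling the factorization of $\hat\varphi$.
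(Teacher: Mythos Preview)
Your proposal has a genuine gap at the step ``Pick a non-resonant phase $\theta_0$ at which Anderson localization holds \ldots\ and let $\varphi$ be an exponentially decaying eigenfunction at $E_0$.'' You are treating $\theta_0$ as a free parameter, but it is not: for the dual Bloch wave construction to hit the energy $E_0$ you need $E_0$ to actually be an eigenvalue of $H_{\hat\lambda,\theta_0}$, and the phases for which this can happen are tied to the rotation number by $2\theta_0 \equiv \pm 2\rho_\lambda(E_0)\bmod (\alpha\Z+\Z)$. When $E_0$ is a putative gap edge, $2\rho_\lambda(E_0)\in\alpha\Z+\Z$, which forces $2\theta_0\in\alpha\Z+\Z$ --- a countable, measure-zero set of phases. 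The a.e.\ localization result of \cite{JKS} says nothing about these particular phases, so you have no right to assume an exponentially decaying eigenfunction exists there. Equivalently: for a generic localized phase $\theta_0$, $E_0$ lies in the spectrum but is merely a limit of eigenvalues, not an eigenvalue, and the duality construction produces nothing.

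This is exactly the obstacle the paper (following \cite{AJ2}) is built to overcome. Instead of a.e.\ localization, it proves \emph{almost localization for every phase} (Theorem~\ref{al}/Lemma~\ref{alexplicit}); then, via a quantitative duality, almost reducibility in Region~II (Theorem~\ref{ar}); and finally Theorem~\ref{rhotheta} shows that if $2\rho_\lambda(E)\in\alpha\Z+\Z$ then the phase $\theta(E)$ supplied by Theorem~\ref{Etheta} is necessarily $\epsilon_0$-\emph{non}-resonant, so that almost localization upgrades to genuine exponential decay at precisely the phase you need. Only then does Theorem~\ref{reducible}(B) give analytic reducibility to a constant parabolic with $a\neq 0$, after which the Moser--P\"oschel perturbation (the paper's final computation, which is a direct trace calculation rather than a Chambers-type integral over $q_n$-periodic monodromies) shows $E_0$ is a genuine gap edge. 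Your outline skips the entire ``every~$\theta$'' machinery, and without it the argument does not start.
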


\begin{rem}
We note the Dry Ten Martini problem has not yet been solved for the self-dual AMO. In the self-dual region III, Cantor spectrum is known in the isotropic case (when $\lambda_1=\lambda_3$), see Fact $2.1$ in \cite{AJM}. In fact one could prove the operator has zero Lebesgue measure spectrum for all frequencies.
\end{rem}

\begin{rem}
In region I and II, for Liouvillean $\alpha$ (where $\beta(\alpha)$ is large), it is not clear whether even the Cantor spectrum holds. The proof may require a non-trivial adjustment of the proof for AMO in \cite{CEY}.
\end{rem}

We first establish almost localization (see section 3.1) in region I, then a quantitative version of Aubry duality to obtain almost reducibility (see section 3.2) in region II which enables us to deal with all energies whose rotation numbers are $\alpha$-rational.

Thus the strategy follows that of \cite{AJ2}, but we need to extend the almost localization and quantitative duality, as well as the final argument to our Jacobi setting, which is non-trivial on a technical level. At the same time unlike \cite{AJ2}, we only deal with a short-range dual operator, leading to a significant streamlining of some arguments of \cite{AJ2}.

We organize the paper as follows: in section 2 we present some preliminaries, in section 3 we state our main results about almost localization and almost reducibility, relying on which we provide a proof of Theorem $\ref{dry}$. In section 4 and 5 we prove the main results that we present in section 3.

\section{preliminaries}
\subsection{Cocycles}

Let $\alpha\in \R\setminus \Q$ and $A\in C^0(\T, M_2(\C))$ measurable with $\log{\|A(x)\|}\in L^1(\T)$. The quasi-periodic $cocycle$ $(\alpha, A)$ is the dynamical system on $\T\times \C^2$ defined by $(\alpha, A)(x, v)=(x+\alpha, A(x)v)$. The {\it Lyapunov exponent} is defined by
\begin{align*}
L(\alpha, A)=\lim_{n\rightarrow\infty}\frac{1}{n}\int_{\T}\log{\|A_n(x)\|}\mathrm{d}x=\inf_{n}\frac{1}{n}\int_{\T}\log{\|A_n(x)\|}\mathrm{d}x.
\end{align*}
where
\begin{align*}
\begin{cases}
A_n(x)= A(x+(n-1)\alpha)\cdots A(x) \ \ \mathrm{for}\ n\geq 0,\\
A_n(x)=A^{-1}(x+n\alpha)\cdots A^{-1}(x-\alpha) \ \ \mathrm{for}\ n<0.
\end{cases}
\end{align*}

\begin{lemma}\label{uniformupp}(e.g.\cite{AJ2})
Let $(\alpha, A)$ be a continous cocycle, then for any $\delta>0$ there exists $C_{\delta}>0$ such that for any $n\in \N$ and $\theta\in \T$ we have
\begin{align*}
\|A_n(\theta)\|\leq C_{\delta} e^{(L(\alpha, A)+\delta)n}.
\end{align*}
\end{lemma}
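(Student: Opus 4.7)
The plan is to combine the infimum formula $L(\alpha,A)=\inf_n \frac{1}{n}\int_{\T}\log\|A_n(\theta)\|\,d\theta$ with the unique ergodicity of irrational rotations on $\T$. Given $\delta>0$, I would first choose $N=N(\delta)$ so large that
\[
\frac{1}{N}\int_{\T}\log\|A_N(\theta)\|\,d\theta\leq L(\alpha,A)+\frac{\delta}{2}.
\]

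Next, I would establish the bound along the subsequence $n=kN$. Submultiplicativity gives
\[
\log\|A_{kN}(\theta)\|\leq \sum_{j=0}^{k-1}\log\|A_N(\theta+jN\alpha)\|,
\]
so I need to control these Birkhoff sums uniformly. Since $\alpha\in\R\setminus\Q$, the rotation $R_{N\alpha}:\theta\mapsto\theta+N\alpha$ is uniquely ergodic on $\T$, and I would apply a uniform Birkhoff-type theorem to $\phi:=\log\|A_N\|$: the time averages converge from above to $\int_{\T}\phi$ uniformly in $\theta$. This produces some $k_0=k_0(\delta)$ such that for all $k\geq k_0$ and all $\theta\in\T$,
\[
\sum_{j=0}^{k-1}\log\|A_N(\theta+jN\alpha)\|\leq kN\bigl(L(\alpha,A)+\delta\bigr),
\]
and hence $\|A_{kN}(\theta)\|\leq e^{kN(L(\alpha,A)+\delta)}$.

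For an arbitrary $n\geq 1$, I would write $n=kN+r$ with $0\leq r<N$ and decompose $A_n(\theta)=A_r(\theta+kN\alpha)\,A_{kN}(\theta)$. Since $A\in C^0(\T,M_2(\C))$ and $r<N$, the factor $\|A_r(\cdot)\|$ is uniformly bounded by a constant depending only on $N$, hence only on $\delta$; the finitely many small-$k$ cases ($k<k_0$) are similarly absorbed. Combining these bounds yields the desired $\|A_n(\theta)\|\leq C_\delta e^{(L(\alpha,A)+\delta)n}$.

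The main technical point is the uniform upper bound on Birkhoff sums of $\phi=\log\|A_N\|$, which a priori takes values in $[-\infty,M]$ since $A_N(\theta)$ may be singular. To handle this, I would truncate from below, setting $\phi_{\varepsilon}:=\max(\log\|A_N\|,-1/\varepsilon)$, which is genuinely continuous; apply the classical uniform Birkhoff theorem for continuous observables over uniquely ergodic systems to $\phi_\varepsilon$; then pass to the limit using monotone convergence $\int_{\T}\phi_\varepsilon\downarrow \int_{\T}\phi$. This is the only nontrivial step; the rest is bookkeeping with the decomposition $n=kN+r$ and continuity on the compact torus.
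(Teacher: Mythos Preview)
The paper does not give its own proof of this lemma; it is stated with a reference to \cite{AJ2} and used as a black box. Your argument is correct and is precisely the standard one behind such citations: pick $N$ via the infimum formula, use submultiplicativity to reduce to Birkhoff sums of $\log\|A_N\|$ along the uniquely ergodic rotation by $N\alpha$, and control those sums uniformly from above. The truncation $\phi_\varepsilon=\max(\log\|A_N\|,-1/\varepsilon)$ is exactly the right device to cope with possible singularities of $A_N$, since you only need an upper bound and $\phi\leq\phi_\varepsilon$ with $\int\phi_\varepsilon\downarrow\int\phi$ by monotone convergence; the remaining bookkeeping with $n=kN+r$ and the continuity bound on $\|A_r\|$ is routine. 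So there is nothing to compare against in this paper, and your proof would serve as a self-contained substitute for the citation.
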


We say that $(\alpha, A)$ is {\it uniformly hyperbolic} if there exists continuous splitting $\C^2=E^s(x)\bigoplus E^u(x)$, $x\in \T$ such that for some constant $C, \eta>0$ and all $n\geq 0$, $\|A_n(x) v\|\leq Ce^{-\eta n}\|v\|$ for $v\in E^s(x)$ and $\|A_{-n}(x) v\|\leq C e^{-\eta n}\|v\|$ for $v\in E^{u}(x)$. 

Given two complex cocycles $(\alpha, A^{(1)})$ and $(\alpha, A^{(2)})$, we say they are {\it complex conjugate} to each other if there is  $M\in C^0(\T, SL(2,\C))$ such that
\begin{align*}
M^{-1}(x+\alpha)A^{(1)}(x)M(x)=A^{(2)}(x).
\end{align*}
We assume now that $A$ is a real cocycle, $A\in C^0(\T, SL(2,\R))$.  
The notation of {\it real conjugacy} (between real cocycles) is the same as before, except that we look for $M\in C^0(\T, PSL(2,\R))$. 
A reason why we look for $M\in C^0(\T, PSL(2,\R))$ instead of $M\in C^0(\T, SL(2,\R))$ is given by the following well-known result.
\begin{thm}\label{uhconjugate}
Let $(\alpha, A)$ be uniformly hyperbolic, assume $\alpha\in \mathrm{DC}$ and $A$ analytic, then there exists $M\in C^{\omega}(\T, PSL(2,\R))$ \footnote{In general one cannot take $M\in C^{\omega}(\T, SL(2,\R))$.} such that
$M^{-1}(x+\alpha)A(x)M(x)$ is constant. 
\end{thm}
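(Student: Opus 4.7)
The plan is a two-step reduction: first diagonalize the cocycle using the invariant splitting, then kill the $x$-dependence by solving a cohomological equation.

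For the diagonalization, I need to upgrade the continuous splitting $\R^2 = E^s(x)\oplus E^u(x)$ supplied by uniform hyperbolicity to an \emph{analytic} splitting. Since $E^s(x)$ is the unique $A$-invariant line that is exponentially contracted under forward iteration, it is recovered as the limit in $\R\P^1$ of any generic initial direction pushed forward by the inverse dynamics. Uniform hyperbolicity is an open condition, so it persists on a sufficiently thin complex strip $\{|\Im z|<\rho\}$ around $\T$, and on this strip the induced projective dynamics contracts toward $E^s$ uniformly; the iterates therefore converge in the analytic topology, giving $E^s,E^u\in C^{\omega}(\T,\R\P^1)$. Choose an analytic normalized basis $M_0(x)$ whose columns span $E^u(x)$ and $E^s(x)$: the value $M_0(x)$ lives in $PSL(2,\R)$ rather than $SL(2,\R)$ precisely because the line bundles $E^{s/u}$ over $\T$ may be non-orientable (Möbius), which is also what the footnote is alluding to. After conjugating, the cocycle becomes diagonal,
$$M_0(x+\alpha)^{-1}A(x)M_0(x)=\mathrm{diag}(\lambda(x),\lambda(x)^{-1}),$$
with $\lambda\in C^{\omega}(\T,\R_{>0})$ and $\int_{\T}\log\lambda\,dx=L(\alpha,A)>0$ by uniform hyperbolicity.

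To render the diagonal entries constant, I solve for $\phi\in C^{\omega}(\T,\R)$ the cohomological equation
$$\phi(x+\alpha)-\phi(x)=\log\lambda(x)-L(\alpha,A).$$
The right-hand side $g(x)$ is analytic with zero mean and so has Fourier coefficients satisfying $|\hat g(k)|\leq Ce^{-\rho|k|}$. The formal solution $\phi(x)=\sum_{k\neq0}\hat g(k)(e^{2\pi ik\alpha}-1)^{-1}e^{2\pi ikx}$ is controlled by the Diophantine hypothesis $\beta(\alpha)=0$, which yields $|e^{2\pi ik\alpha}-1|^{-1}\leq C_{\epsilon}e^{\epsilon|k|}$ for every $\epsilon>0$; combined with the exponential decay of $\hat g(k)$, this shows $\phi$ is analytic on a slightly thinner strip. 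Conjugating the diagonal cocycle by $\mathrm{diag}(e^{\phi(x)},e^{-\phi(x)})$ produces the constant $\mathrm{diag}(e^{L},e^{-L})$, so
$$M(x):=M_0(x)\,\mathrm{diag}(e^{\phi(x)},e^{-\phi(x)})$$
is the desired analytic conjugacy.

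The main obstacle is the first step: producing the analytic splitting. Continuity is immediate from the definition of uniform hyperbolicity, but pushing the construction into the analytic category requires tracking the hyperbolicity as one extends $x$ into the complex plane and checking that the projective contraction rates remain strong enough for the iterative construction of $E^{s/u}$ to converge in a uniform complex-analytic sense. The second step is the standard small-divisor issue, and is comparatively mild here: $\beta(\alpha)=0$ provides the subexponential control $C_\epsilon e^{\epsilon|k|}$, which is exactly the strength needed to preserve analyticity when dividing by $e^{2\pi ik\alpha}-1$.
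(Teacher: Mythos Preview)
The paper does not prove this theorem: it is stated in the preliminaries as a ``well-known result'' and no argument is given. So there is nothing to compare against, and your proposal should be judged on its own.

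Your two-step outline (analytic diagonalization via the hyperbolic splitting, followed by a scalar cohomological equation) is the standard route and is essentially correct. A couple of points deserve slightly more care. First, the assertion that the invariant directions $E^{s/u}$ are analytic is the real content here; your sketch (persistence of hyperbolicity on a thin strip and uniform projective contraction of iterates) is the right idea, but as written it is closer to a slogan than a proof. A cleaner way to phrase it is that on a strip $\{|\Im z|<\rho\}$ the complexified cocycle still admits an invariant analytic cone field, and the projective image of $A_n(\cdot-n\alpha)$ applied to any fixed direction converges uniformly on the strip, hence the limit $E^u$ is analytic; similarly for $E^s$ with backward iteration. Second, once you form $M_0$ with columns in $E^u$ and $E^s$ and normalize to determinant one, the diagonal entry $\lambda(x)$ is a nonvanishing analytic function on $\T$, but you have tacitly assumed $\lambda>0$. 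Since $\lambda$ is continuous and nonzero it has constant sign, and if $\lambda<0$ you simply solve the cohomological equation for $\log|\lambda|$ and land on $\mathrm{diag}(-e^L,-e^{-L})$ instead; the conclusion is unchanged. With these two clarifications your argument is complete, and your identification of the $PSL$ versus $SL$ obstruction with the possible non-orientability of the line bundles $E^{s/u}$ is exactly right.
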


We say $(\alpha, A)$ is (analytically) {\it reducible} if it is real conjugate to a constant cocycle by an analytic conjugacy.

Let 
\begin{align*}
R_{\theta}=
\left(
\begin{matrix}
\cos{2\pi \theta} \ \ &-\sin{2\pi \theta}\\
\sin{2\pi \theta} \ \ &\cos{2\pi \theta}
\end{matrix}
\right).
\end{align*}
Any $A\in C^0(\T, PSL(2,\R))$ is homotopic to $x\rightarrow R_{\frac{k}{2}x}$ for some $k\in \Z$ called the ${\it degree}$ of $A$, denoted by $\deg{A}=k$.

Assume now that $A\in C^0(\T, SL(2,\R))$ is homotopic to identity.
Then there exists $\phi:\R/\Z \times \R/\Z \to \R$ and $v:\R/\Z \times \R/\Z \rightarrow \R^+$ such that
\begin{align*}
A(x) 
\left(
\begin{matrix}
\cos 2 \pi y \\ 
\sin 2 \pi y 
\end{matrix} 
\right)
=v(x,y)
\left(
\begin{matrix} 
\cos 2 \pi (y+\phi(x,y)) \\
\sin 2 \pi (y+\phi(x,y)) 
\end{matrix} 
\right).
\end{align*}
The function $\phi$ is called a lift of $A$.  
Let $\mu$ be any probability on $\R/\Z \times \R/\Z$ which is invariant under the continuous
map $T:(x,y) \mapsto (x+\alpha,y+\phi(x,y))$, projecting over Lebesgue
measure on the first coordinate.  
Then the number
\begin{align*}
\rho(\alpha,A)=\int \phi\ d\mu \mod \Z
\end{align*}
is independent of the choices of $\phi$ and $\mu$, and is called the
{\it fibered rotation number} of
$(\alpha,A)$.

It can be proved directly by the definition that
\begin{align}\label{rho0}
|\rho(\alpha,A)-\theta|<C\|A-R_{\theta}\|_0.
\end{align}

If $(\alpha,A^{(1)})$ and
$(\alpha,A^{(2)})$ are real conjugate,
$M^{-1}(x+\alpha)A^{(2)}(x)M(x)=A^{(1)}(x)$, and $M:\R/\Z \to PSL(2,\R)$
has degree $k$ then
\begin{align}\label{rhoconju}
\rho(\alpha,A^{(1)})=\rho(\alpha,A^{(2)})-k\alpha/2.
\end{align}

For uniformly hyperbolic cocycles there is the following well-known result.

\begin{thm} \label{uhrho}
Let $(\alpha,A)$ be a uniformly hyperbolic cocycle, with $\alpha \in \R
\setminus \Q$.  Then $2 \rho(\alpha,A) \in \alpha \Z+\Z$.
\end{thm}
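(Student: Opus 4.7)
The strategy is to use the continuous invariant splitting furnished by uniform hyperbolicity to conjugate $(\alpha, A)$ into an upper-triangular normal form, compute the (quantized) rotation number of that normal form directly, and transport the result back via the conjugation formula (\ref{rhoconju}). To make $\rho$ well-defined I take $A\in C^0(\T, SL(2,\R))$ homotopic to the identity, as in the paper's setup.

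First I invoke uniform hyperbolicity to obtain the continuous invariant real line bundles $E^s(x), E^u(x) \subset \R^2$ over $\T$. Viewing the unstable bundle $E^u$ as a continuous map $\T \to \R P^1$, it has a well-defined integer degree; from this I construct a continuous $M \in C^0(\T, PSL(2,\R))$ of some degree $k \in \Z$ such that $M(x)\cdot [e_1] = E^u(x)$ for every $x \in \T$. Passing to $PSL(2,\R)$ rather than $SL(2,\R)$ is essential because $E^u$ need not be orientable --- this is exactly the obstruction noted in the footnote to Theorem~\ref{uhconjugate}.

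After conjugation, $\tilde A(x) := M(x+\alpha)^{-1} A(x) M(x)$ stabilizes $[e_1]$ and is hence upper triangular with non-vanishing diagonal entry $a(x)$ of constant sign. The horizontal section $y = 0$ is therefore invariant under the projectivized skew product $T(x,y) = (x + \alpha, y + \phi(x,y))$; pushing Lebesgue measure on $\T$ forward onto $\T \times \{0\}$ gives an admissible invariant probability $\mu$, and on its support $\phi \equiv 0$ or $1/2$ according to the sign of $a$. Therefore $\rho(\alpha, \tilde A) \in \{0, \tfrac12\}$, and plugging into (\ref{rhoconju}) gives
\begin{equation*}
2\rho(\alpha, A) = 2\rho(\alpha, \tilde A) + k\alpha \in \Z + \alpha\Z,
\end{equation*}
as claimed.

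The main technical point is the construction of the straightening $M$ in step two --- a bundle-theoretic fact asserting that a continuous section of the $\R P^1$-bundle over $\T$ can be realized by an element of $C^0(\T, PSL(2,\R))$ with matching degree. Once that is in place, the rotation number calculation for the triangular cocycle and the application of (\ref{rhoconju}) are routine.
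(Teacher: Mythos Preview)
The paper does not give a proof of this theorem; it is stated in the preliminaries as a ``well-known result'' and used without argument. So there is no proof in the paper to compare against, and I simply assess your argument on its own merits.

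Your approach is correct and is essentially the standard proof. A couple of small points you glossed over:

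\begin{itemize}
\item When $a(x)<0$, the set $\T\times\{0\}$ is \emph{not} $T$-invariant: one has $T(x,0)=(x+\alpha,\tfrac12)$ and $T(x,\tfrac12)=(x+\alpha,0)$. The correct invariant measure in that case is $\mu = dx\otimes\tfrac12(\delta_0+\delta_{1/2})$, on whose support $\phi\equiv\tfrac12$. Your conclusion $\rho(\alpha,\tilde A)\in\{0,\tfrac12\}$ is unaffected, but the sentence ``pushing Lebesgue measure on $\T$ forward onto $\T\times\{0\}$'' should be adjusted accordingly.
\item Strictly speaking, to invoke~(\ref{rhoconju}) you need $\tilde A$ itself to be homotopic to the identity in $C^0(\T,SL(2,\R))$ so that $\rho(\alpha,\tilde A)$ is defined. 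This is immediate: an upper-triangular $SL(2,\R)$ matrix with positive diagonal is homotopic to $I$ by scaling, and one with negative diagonal is homotopic to $-I=R_{1/2}$, which is in turn homotopic to $I$ through $R_t$. It is worth saying this explicitly.
\item One should also note that although $M$ is only $PSL(2,\R)$-valued, the conjugated cocycle $\tilde A(x)=M(x+\alpha)^{-1}A(x)M(x)$ is genuinely $1$-periodic in $SL(2,\R)$: lifting $M$ to $\tilde M:\R/2\Z\to SL(2,\R)$ with $\tilde M(x+1)=\pm\tilde M(x)$, the sign ambiguity cancels in the conjugation. You implicitly use this when speaking of ``the'' diagonal entry $a(x)$.
\end{itemize}

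With these cosmetic fixes, the argument is complete.
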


\subsection{Extended Harper's model}

We consider the extended Harper's model $\{H_{\lambda,\theta}\}_{\theta \in \T}$.  
The formal solution to $H_{\lambda, \theta}u=Eu$ can be reconstructed via the following equation
\begin{align*}
\left(
\begin{matrix}
u_{n+1}\\
u_n
\end{matrix}
\right)
=
A_{\lambda, E}(\theta+n\alpha)
\left(
\begin{matrix}
u_n\\
u_{n-1}
\end{matrix}
\right).
\end{align*}
where 
$A_{\lambda, E}(\theta)=
\frac{1}{c(\theta)}
\left(
\begin{matrix}
E-2\cos{2\pi \theta}\ \ &-\tilde{c}(\theta-\alpha)\\
c(\theta)\ \ &0
\end{matrix}
\right)$. 
Notice that since $A_{\lambda, E}(\theta)\notin SL(2,\R)$, we introduce the following matrix (see Lemma $\ref{conjugate}$)
\begin{align*}
\tilde{A}_{\lambda, E}(\theta)=
\frac{1}{\sqrt{|c|(\theta)|c|(\theta-\alpha)}}
\left(
\begin{matrix}
E-2\cos{2\pi \theta}\ \ &-|c|(\theta-\alpha)\\
|c|(\theta)\ \ &0
\end{matrix}
\right)=
Q_{\lambda}(\theta+\alpha)A_{\lambda, E}(\theta)Q_{\lambda}^{-1}(\theta),
\end{align*}
where 
$|c|(\theta)=\sqrt{c(\theta)\tilde{c}(\theta)}$ (which is not the same as $|c(\theta)|=\sqrt{c(\theta)\overline{c(\theta)}}$ when $\theta\notin \T$) 
and $Q_{\lambda}(\theta)$ is analytic on $|\mathrm{Im}\theta|\leq \frac{\epsilon_1}{2\pi}$.

The spectrum of $H_{\lambda, \theta}$ denoted by
$\Sigma_{\lambda}$, does not depend on $\theta$ \cite{as},
and it is the set of $E$ such that $(\alpha, \tilde{A}_{\lambda, E})$ is not uniformly
hyperbolic.

The Lyapunov exponent is defined by $L_{\lambda}(E)=L(\alpha,A_{\lambda, E})=L(\alpha, \tilde{A}_{\lambda, E})$. 

For a matrix-valued function $M(\theta)$, 
let $M_{\epsilon}(\theta)=M(\theta+i\epsilon)$ be the phase-complexified matrix. 

In \cite{A3}, Avila divides all the energies in the spectrum into three catagories: super-critical, namely the energy with positive Lyapunov exponent; subcritical, namely the energy whose Lyapunov exponent of the phase-complexified cocycle is identically equal to zero in a neighborhood of $\epsilon=0$; critical, otherwise.

The following theorem is shown in \cite{Jm} (see also the appendix):
\begin{thm}\label{LE} 
Extended Harper's model is {\it super-critical} in region I and {\it sub-critical} in region II. Indeed
\begin{itemize} 
\item when $\lambda$ belongs to region II, $L_{\lambda}(E)=L(\alpha, A_{\lambda, E,\epsilon})=L(\alpha, \tilde{A}_{\lambda, E,\epsilon})=0$ on $|\epsilon|\leq \frac{1}{2\pi}\epsilon_1(\lambda)$, 
\item when $\lambda$ belongs to region II, we have $\hat{\lambda}=(\frac{\lambda_3}{\lambda_2}, \frac{1}{\lambda_2}, \frac{\lambda_1}{\lambda_2})$ belongs to region I and 
\begin{equation}\label{LE1}
L_{\hat{\lambda}}(E)= \epsilon_1(\lambda),
\end{equation}
where
\begin{equation}\label{epsilon1}
\epsilon_1(\lambda)=\ln{\frac{\lambda_2+\sqrt{\lambda_2^2-4\lambda_1\lambda_3}}{\max{(\lambda_1+\lambda_3, 1)}+\sqrt{\max{(\lambda_1+\lambda_3,1)}^2-4\lambda_1\lambda_3}}}>0.
\end{equation}
\end{itemize}
\end{thm}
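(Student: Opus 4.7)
The strategy is to apply Avila's global theory of analytic one-frequency $SL(2,\R)$-cocycles, working with $\tilde A_{\lambda,E}$ since it is real whereas $A_{\lambda,E}$ is only complex-valued. Set $L(\epsilon):=L(\alpha,\tilde A_{\lambda,E,\epsilon})$ with $\tilde A_{\lambda,E,\epsilon}(\theta):=\tilde A_{\lambda,E}(\theta+i\epsilon)$. Standard subharmonicity arguments give that $L$ is continuous and convex on $\R$, and Avila's quantization of acceleration forces $L$ to be piecewise affine with slopes in $2\pi\Z$. Moreover $L$ is even in $\epsilon$ since $\tilde A$ is real-analytic. Finally, since $Q_\lambda$ conjugates $A_{\lambda,E}$ to $\tilde A_{\lambda,E}$ over $\T$, the Lyapunov exponent $L_\lambda(E)$ coincides with $L(0)$, so no information is lost by working with $\tilde A$.

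First I would compute $L(\epsilon)$ asymptotically as $\epsilon\to +\infty$. Writing $c$, $\tilde c$, and $2\cos 2\pi\theta$ as Laurent polynomials in $z=e^{2\pi i\theta}$ and complexifying, each matrix entry of $\tilde A_{\lambda,E,\epsilon}(\theta)$ has an explicit dominant term of order $e^{2\pi\epsilon}$. Factoring this out reduces the problem to an upper-triangular "limit" cocycle whose Lyapunov exponent is a closed-form logarithmic expression involving the roots of the quadratic $\lambda_3 z^2+\lambda_2 z+\lambda_1$, yielding $L(\epsilon)=2\pi\epsilon-c_*(\lambda)+o(1)$. Combined with convexity, evenness, and the quantization of slopes, this pins down $L$ on $\R$ as $\max\{L(0),\,2\pi|\epsilon|-c_*(\lambda)\}$, provided intermediate corner points can be excluded.

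For $\hat\lambda$ in region I, an analogous asymptotic computation on the dual cocycle identifies the plateau value as exactly $\epsilon_1(\lambda)$, giving $L_{\hat\lambda}(E)=\epsilon_1(\lambda)$ independent of $E$ in the spectrum. The sub-critical statement for $\lambda$ in region II then follows by Aubry duality: the transformation $\sigma$ translates the super-critical plateau of height $\epsilon_1(\lambda)$ on the dual side into a zero plateau of half-width $\epsilon_1(\lambda)/(2\pi)$ for $L_\lambda$ on the original side, establishing $L_\lambda(E)\equiv 0$ on $|\epsilon|\le\epsilon_1(\lambda)/(2\pi)$ and hence sub-criticality.

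The main obstacle is obtaining the \emph{exact} value $\epsilon_1(\lambda)$ rather than mere positivity; this requires precise tracking of the normalization factor $|c|(\theta+i\epsilon)$ through the asymptotic computation, and ruling out intermediate corners in the piecewise affine structure of $L$ on $(0,\epsilon_1(\lambda)/(2\pi))$. The latter, which guarantees that the plateau extends precisely to the predicted transition point rather than to some smaller value, is typically the most delicate part of such a computation and is what makes the formula sharp.
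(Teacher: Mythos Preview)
Your overall framework---Avila's global theory, compute the large-$\epsilon$ asymptotic, then use convexity, evenness and quantization of acceleration to pin down $L(\epsilon)$---is exactly the mechanism behind the result. However, there is a genuine obstruction in the way you set it up.

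You propose to run the argument on $\tilde A_{\lambda,E}$, claiming its entries are Laurent polynomials in $z=e^{2\pi i\theta}$ so that the $\epsilon\to\infty$ asymptotic is explicit. They are not: $\tilde A$ is built from $|c|(\theta)=\sqrt{c(\theta)\tilde c(\theta)}$, a genuine square root. This is precisely why the conjugacy $Q_\lambda$ is only guaranteed analytic on $|\mathrm{Im}\,\theta|\le \epsilon_1/(2\pi)$; beyond that strip $\tilde A$ need not even be defined single-valuedly, so you cannot send $\epsilon\to\infty$ and the asymptotic step collapses. On the strip itself Avila's theory does give a convex, piecewise-affine, even $L(\epsilon)$, but with no asymptotic input you have no way to locate the corners or the plateau value.

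The paper (following \cite{Jm}; see the appendix) avoids this by passing to the \emph{polynomial} cocycle $D_{\lambda,E}=c\cdot A_{\lambda,E}$, which is entire in $\theta$, and using the exact decomposition
\[
L(\alpha,A_{\lambda,E,\epsilon})=L(\alpha,D_{\lambda,E,\epsilon})-\int_{\T}\ln|c(x+i\epsilon)|\,dx.
\]
Now the $\epsilon\to\pm\infty$ asymptotic of $L(D_\epsilon)$ is legitimate and, together with the degree bound on the acceleration (the entries of $D$ are degree one in $e^{\pm 2\pi i\theta}$), forces $L(D_\epsilon)$ to equal its asymptotic line for all $\epsilon$, killing the ``intermediate corners'' issue. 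The second term is computed exactly by Jensen's formula as a piecewise-affine function of $\epsilon$. Subtracting gives $L(A_{\lambda,E,\epsilon})=0$ on $|\epsilon|\le\epsilon_1/(2\pi)$ directly for $\lambda$ in region II, and the analogous computation with $\hat\lambda$ gives the region I formula $L_{\hat\lambda}(E)=\epsilon_1(\lambda)$.

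A secondary point: your plan to obtain sub-criticality in region II from super-criticality in region I ``by Aubry duality'' is not how the argument goes here, and as stated it is unjustified---Aubry duality relates spectra and IDS, but there is no ready-made statement that converts a plateau of height $\epsilon_1$ on the dual side into a zero plateau of half-width $\epsilon_1/(2\pi)$. The paper simply performs the direct computation in both regions. If you switch to the $D$-cocycle decomposition as above, the duality detour becomes unnecessary.
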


Fix a $\theta$ and $f\in l^2(\Z)$. Let $\mu_{\lambda, \theta}^f$ be the {\it spectral measure} of $H_{\lambda ,\theta}$ corresponding to $f$,
\begin{align*}
\langle (H_{\lambda, \theta}-z)^{-1}f, f \rangle=\int_{\R}\frac{1}{E-z}\mathrm{d}\mu_{\lambda, \theta}^f (E).
\end{align*}
for $z$ in the resolvent set $\C\setminus \Sigma_{\lambda}$.

The {\it integrated density of states} ($\mathrm{IDS}$) is the function $N_{\lambda}: \R \to [0,1]$ defined by
\begin{align}\label{IDS}
N_{\lambda}(E)=\int_{\T}\mu_{\lambda, \theta}^f(-\infty, E]\mathrm{d}\theta,
\end{align}
where $f\in l^2(\Z)$ is such that $\|f\|_{l^2(\Z)}=1$. It is a continuous non-decreasing surjective funtion.

Notice that $\tilde{A}_{\lambda, E}(\theta)\in SL(2,\R)$ is homotopic to identity in $C^{0}(\T, SL(2,\R))$, in fact just consider
\begin{align*}
H_t(\lambda, E, \theta)=\frac{1}{\sqrt{|c|(\theta)|c|(\theta-t\alpha)}}
\left(
\begin{matrix}
t(E-v(\theta))\ \ &-|c|(\theta-t\alpha)\\
|c|(\theta)   \ \ &0
\end{matrix}
\right).
\end{align*}
which establishes a homotopy of $\tilde{A}_{\lambda, E}(\theta)$ to $R_{\frac{1}{4}}$ and hence to the identity. 
Therefore we can define the rotation number  $\rho(\alpha, \tilde{A}_{\lambda, E})$. Let $\rho_{\lambda}(E)=\rho(\alpha, {\tilde{A}_{\lambda, E}})$.
Notice that $\rho_{\lambda}(E)$ is associated to the operator
\begin{align*}
(\tilde{H}_{\lambda, \theta}u)_n=|c|(\theta+n\alpha)u_{n+1}+|c|(\theta+(n-1)\alpha)u_{n-1}+2\cos{2\pi (\theta+n\alpha)} u_n.
\end{align*}
It is easily seen that for each $\theta$, $\tilde{H}_{\lambda ,\theta}$ and $H_{\lambda ,\theta}$ differ by a unitary operator, thus they share the same spectrum and integrated density of states, $\tilde{N}_{\lambda}(E)=N_{\lambda}(E)$. 
The relation between the integrated density of states and rotation number of $\tilde{H}_{\lambda, \theta}$ yields the following
\begin{equation}\label{IDSROT}
N_{\lambda}(E)=\tilde{N}_{\lambda}(E)=1-2\rho_{\lambda}(E).
\end{equation}

\subsection{The dual model}
It turns out the spectrum $\Sigma_{\lambda}$ of $H_{\lambda, \theta}$ is related to the spectrum $\Sigma_{\hat{\lambda}}$ of $H_{\hat{\lambda}, \theta}$ in the following way
\begin{align*}
\Sigma_{\lambda}=\lambda_2 \Sigma_{\hat{\lambda}}
\end{align*}
by Aubry duality.
This map $\sigma:\lambda\to \hat{\lambda}$ establishes the duality between region I and region II.
The $\mathrm{IDS}$ $N_{\lambda}(E)$ of $H_{\lambda, \theta}$ coincide with the $\mathrm{IDS}$ $N_{\hat{\lambda}}({E}/{\lambda_2})$ of $H_{\hat{\lambda},\theta}$.
Since $\Sigma_{\lambda}=\lambda_2 \Sigma_{\hat{\lambda}}$, we have the following
\begin{thm} \cite{ber}, \cite{sim}
For any $\lambda, \theta$, there exists a dense set of $E\in \Sigma_{\lambda}$ such that there exists a non-zero solution of $H_{\hat{\lambda}, \theta}u=\frac{E}{\lambda_2}u$ with $|u_k|\leq 1+|k|$.
\end{thm}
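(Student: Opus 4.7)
The plan is to derive this statement from the classical Schnol/Berezanskii generalized-eigenfunction theorem applied to the dual operator $H_{\hat{\lambda},\theta}$, combined with the Aubry duality identity $\Sigma_{\lambda}=\lambda_2 \Sigma_{\hat{\lambda}}$ recorded immediately above. The overall strategy is to produce polynomially bounded generalized eigenfunctions for the dual operator on a spectrally full measure set, then transport that density statement to $\Sigma_\lambda$ through multiplication by $\lambda_2$.

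First I would fix $\lambda$ and $\theta$ and consider the bounded self-adjoint Jacobi operator $H_{\hat{\lambda},\theta}$ on $l^2(\Z)$. Pick any cyclic vector $f \in l^2(\Z)$ (for example $f = \delta_0 + \delta_1$, which is cyclic because the off-diagonal coefficients of $H_{\hat{\lambda},\theta}$ do not vanish) and let $\mu = \mu_{\hat{\lambda},\theta}^{f}$ be the associated spectral measure. Schnol's theorem, in the form valid for bounded Jacobi matrices on $\Z$ (see \cite{ber}, \cite{sim}), asserts that for $\mu$-almost every $\tilde E$ there is a non-zero formal solution $u$ of
\begin{align*}
H_{\hat{\lambda},\theta} u = \tilde E u
\end{align*}
satisfying a polynomial bound $|u_k| \le C (1+|k|)^{1/2+\epsilon}$ for any prescribed $\epsilon>0$, where $C$ depends on $\tilde E$ and $u$. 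After rescaling $u$ by $C$ the bound becomes $|u_k|\le 1+|k|$. Denote by $\mathcal{S}$ the set of $\tilde E$ for which such a solution exists.

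Next, I would observe that the support of $\mu$ coincides with the spectrum $\Sigma_{\hat{\lambda}}$ (which is $\theta$-independent by \cite{as}), since $f$ is cyclic for $H_{\hat{\lambda},\theta}$. Consequently the full $\mu$-measure set $\mathcal{S}$ is dense in $\Sigma_{\hat{\lambda}}$. By the Aubry duality identity $\Sigma_{\lambda} = \lambda_2 \Sigma_{\hat{\lambda}}$, the image $\lambda_2 \cdot \mathcal{S}$ is dense in $\Sigma_{\lambda}$, and for every $E \in \lambda_2 \cdot \mathcal{S}$ the energy $\tilde E = E/\lambda_2$ lies in $\mathcal{S}$, yielding the desired non-zero polynomially bounded solution $u$ of $H_{\hat{\lambda},\theta} u = (E/\lambda_2) u$ with $|u_k|\le 1+|k|$.

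The only mild subtlety is the quantitative form of the polynomial bound: Schnol's theorem delivers growth of order strictly less than $1$, so the exponent $1$ in $|u_k|\le 1+|k|$ leaves ample room, and the constant $1$ is recovered by the trivial rescaling $u \mapsto u/C$. No additional arithmetic, dynamical, or large-deviation input is required; the entire argument rests on Schnol's general principle and the duality relation $\Sigma_{\lambda}=\lambda_2 \Sigma_{\hat{\lambda}}$.
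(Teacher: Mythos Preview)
Your proposal is correct and matches the paper's approach: the paper does not prove this statement but simply cites \cite{ber}, \cite{sim} after recording $\Sigma_{\lambda}=\lambda_2\Sigma_{\hat{\lambda}}$, i.e.\ exactly the Schnol/Berezanskii argument on the dual side transported by duality that you outline. One minor point: the single vector $\delta_0+\delta_1$ need not be cyclic for a whole-line Jacobi operator (spectral multiplicity can be $2$), but this is harmless since you can instead apply Schnol with respect to $\mu_{\delta_0}+\mu_{\delta_1}$, whose support is all of $\Sigma_{\hat{\lambda}}$.
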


\subsection{Bounded eigenfunction for every energy}The next result from \cite{AJ2} allows us to pass from a statement of every $\theta$ to every $E$.
\begin{thm}\label{Etheta}\cite{AJ2}
If $E\in \Sigma_{\lambda}$ then there exists $\theta(E) \in \T$ and a bounded solution of $H_{\hat{\lambda},\alpha,\theta}u=\frac{E}{\lambda_2}u$ with $u_0=1$ and $|u_k|\leq 1$.
\end{thm}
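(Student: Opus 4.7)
The approach is to combine Aubry duality with a shift-and-normalize compactness argument applied to the polynomially bounded generalized eigenfunctions supplied by the preceding theorem.

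By the duality $\Sigma_\lambda = \lambda_2 \Sigma_{\hat\lambda}$ recorded in \S 2.3, $E' := E/\lambda_2$ lies in $\Sigma_{\hat\lambda}$, so it suffices to produce $\theta^* \in \T$ and $u$ with $u_0 = 1$, $|u_k| \leq 1$, and $H_{\hat\lambda, \alpha, \theta^*} u = E' u$. Fix an auxiliary phase $\theta_0 \in \T$. The Berezanskii--Simon type result quoted above (the previous theorem) furnishes a dense set $\mathcal D \subset \Sigma_\lambda$ such that for each $\widetilde E \in \mathcal D$ there exists a non-zero $v$ satisfying $H_{\hat\lambda, \theta_0} v = (\widetilde E/\lambda_2) v$ and $|v_k| \leq 1 + |k|$. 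Pick $E_n \in \mathcal D$ with $E_n \to E$, and denote the corresponding solutions by $v^{(n)}$.

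For each $n$, set $M_n := \sup_{k \in \Z} |v^{(n)}_k|/(1+|k|)$ and choose $k_n \in \Z$ with $|v^{(n)}_{k_n}|/(1+|k_n|) \geq (1-1/n) M_n$. Put
\begin{equation*}
\theta_n := \theta_0 + k_n \alpha \pmod 1, \qquad u^{(n)}_k := \frac{v^{(n)}_{k+k_n}}{v^{(n)}_{k_n}}.
\end{equation*}
A direct substitution, using that shifting the phase $\theta_0$ by $k_n \alpha$ in the extended Harper operator amounts to shifting the index by $k_n$, shows that $H_{\hat\lambda, \theta_n} u^{(n)} = (E_n/\lambda_2) u^{(n)}$; moreover $u^{(n)}_0 = 1$ and
\begin{equation*}
|u^{(n)}_k| \leq \frac{1 + |k + k_n|}{(1-1/n)(1+|k_n|)}.
\end{equation*}
A diagonal extraction, exploiting compactness of $\T$ and the uniform boundedness of $u^{(n)}_k$ on finite windows, yields a subsequence with $\theta_n \to \theta^* \in \T$ and $u^{(n)}_k \to u_k$ pointwise in $k$; the eigenvalue equation and the normalization $u_0 = 1$ pass to the limit.

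The one delicate point, which will be the main obstacle, is to guarantee $|k_n| \to \infty$ along this subsequence. Once that holds, $(1 + |k + k_n|)/(1 + |k_n|) \to 1$ for each fixed $k$, and the displayed bound gives $|u_k| \leq 1$. The failure mode is that, infinitely often, the near-maximizers of $|v^{(n)}_\cdot|/(1 + |\cdot|)$ remain trapped in a bounded window, which (passing to a further subsequence) would correspond to $v^{(n)}$ being essentially an $\ell^2$-eigenfunction of the fixed operator $H_{\hat\lambda, \theta_0}$. Since $H_{\hat\lambda, \theta_0}$ has at most countably many $\ell^2$-eigenvalues, these can be avoided when choosing $E_n \in \mathcal D$ converging to $E$. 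With that arranged, setting $\theta(E) := \theta^*$ and keeping the limiting $u$ completes the proof.
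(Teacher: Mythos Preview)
The paper does not give its own proof of this statement; it is quoted directly from \cite{AJ2}. So the comparison is with the argument in \cite{AJ2}, and the relevant question is simply whether your proposal is correct.

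There is a genuine gap, and it is exactly at the point you flag as ``the main obstacle.'' Your justification for $|k_n|\to\infty$ does not hold. If $k_n$ is a near-maximizer of $|v^{(n)}_k|/(1+|k|)$ and the $k_n$ stay bounded, the only conclusion is that, after normalizing at $k_n$, the solution satisfies $|u^{(n)}_k|\lesssim (1+|k+k_n|)/(1+|k_n|)\lesssim 1+|k|$; the limit is at most linearly growing, not $\ell^2$. The location of the maximum of a weighted sequence says nothing about decay: already the constant solution $v_k\equiv 1$ has its weighted maximum at $k=0$ and is not in $\ell^2$. Hence ``bounded $k_n$ $\Rightarrow$ essentially an $\ell^2$-eigenfunction'' is false, and the device of avoiding the countable set of eigenvalues of $H_{\hat\lambda,\theta_0}$ does not rescue the argument.

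The standard fix, which is essentially the \cite{AJ2} argument, is to shift by the location of the \emph{unweighted} maximum on a growing window and exploit subexponential growth. Given a single polynomially bounded solution $v$ with $|v_k|\le 1+|k|$, set $a_m=\max_{|k|\le m}|v_k|$ and pick $|j_m|\le m$ with $|v_{j_m}|=a_m$. Since $a_m\le 1+m$, for every $l$ there is $m_l$ with $a_{m_l+l}/a_{m_l}\le 1+1/l$ (otherwise $a_m$ would grow exponentially). Then $u^{(l)}_k:=v_{k+j_{m_l}}/v_{j_{m_l}}$ satisfies $u^{(l)}_0=1$ and $|u^{(l)}_k|\le a_{m_l+|k|}/a_{m_l}\le 1+1/l$ for $|k|\le l$; a diagonal limit in $(\theta_0+j_{m_l}\alpha,\,u^{(l)})$ yields a pair $(\theta,u)$ with $u_0=1$, $|u_k|\le 1$. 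This produces a bounded solution for each $E$ in the dense Schnol set, and then your closedness/compactness step (which is fine) extends it to every $E\in\Sigma_\lambda$.
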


\subsection{Localization and reducibility}
\begin{thm}\label{reducible}
Given $\alpha$ irrational, $\theta\in \R$ and $\lambda$ in region II, fix $E\in \Sigma_{\lambda}$, and suppose $H_{\hat{\lambda}, \theta}u=\frac{E}{\lambda_2}u$ has a non-zero exponentially decaying eigenfunction $u={\{u_k\}}_{k\in\Z}$, $|u_k|\leq e^{-c |k|}$ for $k$ large enough. Then the following hold:
\begin{itemize}
\item (A) If $2\theta\notin \alpha\Z+\Z$, then there exists $M: {\R}/{\Z}\rightarrow\ SL(2, \R)$ analytic, such that $${M^{-1}(x+\alpha)}\tilde{A}_{\lambda, E}(x)M(x)=R_{\pm\theta}.$$ 
In this case $\rho (\alpha, \tilde{A}_{\lambda, E})=\pm\theta+\frac{m}{2}\alpha$ $\mathrm{mod} \Z$, where $m=\deg{M}$ (here since $M\in SL(2,\R)$, we have that $m$ is an even number) and $2\rho (\alpha, \tilde{A}_{\lambda,E})\notin \alpha\Z+\Z$.
\item (B) If $2\theta \in \alpha\Z+\Z$ and $\alpha\in\DC$, then there exists $M: {\R}/{\Z}\rightarrow\ PSL(2, \R)$ analytic, such that 
$${M^{-1}(x+\alpha)}\tilde{A}_{\lambda, E}(x)M(x)=\left(\begin{matrix}\pm 1       &a\\ 0      &\pm 1\end{matrix}\right)$$
 with $a\neq 0$. In this case $\rho(\alpha, \tilde{A}_{\lambda, E})=\frac{m}{2}\alpha$ $\mathrm{mod}\Z$, where $m=\deg{M}$, i.e. $2\rho(\alpha, \tilde{A}_{\lambda, E})\in \alpha\Z+\Z$.
\end{itemize}
\end{thm}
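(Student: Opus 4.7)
\emph{Plan.} The plan is to invoke Aubry duality: the exponentially decaying eigenfunction $u$ of the short-range dual operator will Fourier-sum into an analytic Bloch wave for the cocycle $\tilde A_{\lambda, E}$, and this Bloch wave will furnish the columns of the conjugacy $M$. I first set $U(x) := \sum_{k \in \Z} u_k e^{2\pi i k x}$; exponential decay of $u_k$ makes $U$ analytic in a strip $|\Im x| < c/(2\pi)$. A direct Fourier manipulation of the dual eigenvalue equation $H_{\hat\lambda, \theta} u = (E/\lambda_2) u$ shows that $\Psi_n(x) := e^{2\pi i n \theta} U(x + n\alpha)$ solves the EHM equation for $H_\lambda$ at energy $E$, so after conjugating by $Q_\lambda$ the vector
\[
\tilde W(x) := Q_\lambda(x) \begin{pmatrix} e^{2\pi i \theta} U(x+\alpha) \\ U(x) \end{pmatrix}
\]
is analytic on the same strip and satisfies $\tilde A_{\lambda, E}(x)\, \tilde W(x) = e^{2\pi i \theta}\, \tilde W(x+\alpha)$. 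Non-triviality of $u$ together with the cocycle equation prevent $\tilde W$ from vanishing.

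For case (A), reality of $\tilde A_{\lambda, E}$ produces a second Bloch wave $\overline{\tilde W}$ with eigenvalue $e^{-2\pi i \theta}$. The Wronskian $\det(\tilde W(x), \overline{\tilde W}(x))$ is $\alpha$-invariant (since $\det \tilde A_{\lambda, E} = 1$ and the two Bloch eigenvalues multiply to $1$), hence constant by irrationality of $\alpha$, and it vanishes iff $e^{4\pi i \theta} = 1$, which is excluded by $2\theta \notin \alpha\Z + \Z$. The real and imaginary parts of $\tilde W$ (rescaled to unit determinant) then assemble into an analytic $M: \R/\Z \to SL(2,\R)$ realising $M^{-1}(x+\alpha) \tilde A_{\lambda, E}(x) M(x) = R_{\pm\theta}$; the sign is dictated by the orientation of the pair $(\Re \tilde W, \Im \tilde W)$, and evenness of $\deg M$ is automatic because $M$ lands in $SL(2,\R)$ rather than $PSL(2,\R)$. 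Plugging into (\ref{rhoconju}) with $\rho(\alpha, R_{\pm\theta}) = \pm\theta$ produces the rotation number formula, and $2\rho \notin \alpha\Z + \Z$ is equivalent to the hypothesis on $2\theta$.

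For case (B), write $2\theta = p\alpha + q$ with $p, q \in \Z$. Multiplying $\tilde W$ by an analytic factor $e^{-\pi i p x}$ absorbs the Bloch eigenvalue into a coboundary (at the price of passing to $PSL(2,\R)$), and after a further constant unimodular rotation enforcing reality one arrives at a real-analytic $\tilde W_0$ solving $\tilde A_{\lambda, E}(x)\tilde W_0(x) = \pm \tilde W_0(x+\alpha)$. Completing $\tilde W_0$ to an analytic $PSL(2,\R)$-frame conjugates $\tilde A_{\lambda, E}$ to upper-triangular form with $\pm 1$ on the diagonal and some analytic off-diagonal $a(x)$. The Diophantine hypothesis $\alpha \in \mathrm{DC}$ then allows one to solve the real-analytic cohomological equation $b(x+\alpha) - b(x) = a(x) - \langle a\rangle$, flattening $a(x)$ to a constant. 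This constant is nonzero: otherwise $\tilde A_{\lambda, E}$ would be conjugate to $\pm I$, uniformly bounded, forcing $E$ into a spectral gap and contradicting $E \in \Sigma_\lambda$. The rotation number claim follows from (\ref{rhoconju}) with $\rho = 0$ for the parabolic constant.

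The main obstacle is case (B): Aubry duality furnishes only a single Bloch direction when $e^{\pm 2\pi i\theta}$ collapse modulo the $\alpha$-lattice, so the second column of $M$ cannot be read off from a conjugate wave but must be built by solving a cohomological equation --- this is where $\alpha \in \mathrm{DC}$ is essential, through the classical bound on $|\hat b_n| = |\hat a_n|/|e^{2\pi i n\alpha}-1|$ combined with sub-exponential control of small divisors in the analytic category. A secondary hurdle, proper to the Jacobi (non-Schr\"odinger) setting, is the presence of the conjugator $Q_\lambda$: the Fourier computation producing $\tilde W$ has to be carried through this conjugation with uniform control of the analyticity strip, so that the decay rate $c$ of $u$ dominates the width on which $Q_\lambda$ itself fails to be holomorphic.
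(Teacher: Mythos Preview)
Your overall strategy---duality, Bloch wave, Wronskian, frame completion, cohomological equation---matches the paper's, but there are genuine gaps in three places.

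\textbf{The Wronskian criterion in (A).} The assertion that $\det(\tilde W,\overline{\tilde W})$ vanishes iff $e^{4\pi i\theta}=1$ is false. If the Wronskian vanishes then $\tilde W(x)=\eta(x)\,\overline{\tilde W}(x)$ for some unimodular analytic $\eta$, and feeding this into the cocycle relation gives only $e^{4\pi i\theta}\eta(x+\alpha)=\eta(x)$; since $\eta$ can have nonzero winding number $m$, the conclusion is merely $2\theta+m\alpha\in\Z$, i.e.\ $2\theta\in\alpha\Z+\Z$. So the correct dichotomy is: Wronskian $\ne 0$ iff $2\theta\notin\alpha\Z+\Z$. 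The paper establishes exactly this (its Case~1/Case~2 split), and this is what ties (A) and (B) to the two Wronskian alternatives.

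\textbf{Reality in (B).} Your ``constant unimodular rotation enforcing reality'' for $e^{-\pi i p x}\tilde W$ presupposes that this vector and its conjugate are proportional by a \emph{constant}. That holds only once you already know the Wronskian vanishes (then $\eta$ above becomes $\alpha$-invariant after the twist, hence constant). If instead the Wronskian were nonzero, the two independent sections would conjugate $\tilde A$ directly to $\pm I$, forcing $a=0$. So before your frame construction you must argue that $2\theta\in\alpha\Z+\Z$ forces the Wronskian to vanish; this is the step you have skipped.

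\textbf{The argument for $a\ne 0$.} Your justification is backwards: analytic conjugacy to $\pm I$ gives Lyapunov exponent zero and a cocycle that is \emph{not} uniformly hyperbolic, so $E\in\Sigma_\lambda$, not in a gap. The genuine obstruction---which the paper invokes tersely as ``leading to a contradiction'' and which simultaneously handles the nonzero-Wronskian subcase of (B)---is simplicity of the point spectrum of the dual Jacobi operator $H_{\hat\lambda,\theta}$: conjugacy to $\pm I$ would yield two linearly independent analytic Bloch sections, and inverse duality would turn these into two linearly independent $\ell^2$ eigenvectors of $H_{\hat\lambda,\theta}$ at the single energy $E/\lambda_2$, impossible for a tridiagonal operator with nonvanishing off-diagonals.
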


\begin{proof}
Let $u(x)=\sum_{k\in\Z} \hat{u}_k e^{2\pi i k x}$, $U(x)=\left(\begin{matrix} e^{2\pi i \theta}u(x)\\ u(x-\alpha)\end{matrix}\right)$. Then
\begin{align*}
A_{\lambda , E}(x)U(x)=e^{2\pi i \theta}U(x+\alpha),
\end{align*}
\begin{align*}
\tilde{A}_{\lambda , E}(x)\tilde{U}(x)=e^{2\pi i \theta}\tilde{U}(x+\alpha).
\end{align*}
Notice $\tilde{U}(x)=Q_{\lambda}(x)U(x)$ is analytic in $|\mathrm{Im}x|<\frac{\tilde{c}}{2\pi}$, where $\tilde{c}=\min{(\epsilon_1,c)}$, $\epsilon_1$ as in $\ref{epsilon1}$ and $Q_{\lambda}$ as in $\ref{conjugate}$. Define $\overline{\tilde{U}(x)}$ to be the complex conjugate of $\tilde{U}(x)$ on $\T$ and its analytic extension to $|\mathrm{Im}x|<\frac{\tilde{c}}{2\pi}$. 
Let $M(x)$ be the matrix with columns $\tilde{U}(x)$ and $\overline{\tilde{U}(x)}$. Then,
\begin{align*}
\tilde{A}_{\lambda, E}(x)M(x)=M(x+\alpha)\left(\begin{matrix}e^{2\pi i\theta}    &0\\ 0   &e^{-2\pi i\theta}\end{matrix}\right)\ \ \mathrm{on}\ \T.
\end{align*}
Then since $\det{M(x+\alpha)}=\det{M(x)}$, we know $\det{M(x)}$ is a constant on $\T$.

Case 1. If $\det{M(x)}\neq 0$, then let $M(x)=\tilde{M}(x)\left(\begin{matrix}1    &1\\ i  &-i\end{matrix}\right)$.

\begin{align*}
{\tilde{M}^{-1}(x+\alpha)}\tilde{A}_{\lambda, E}(x)\tilde{M}(x)=R_{\theta}=
\left(
\begin{matrix}
\cos{2\pi \theta}\ \ &-\sin{2\pi \theta}\\
\sin{2\pi \theta}\ \ &\cos{2\pi \theta}
\end{matrix}
\right).
\end{align*}

Case 2. If $\det{M(x)}=0$, then if we denote $\tilde{U}(x)=\left(\begin{matrix}u_1(x) \\ u_2(x)\end{matrix}\right)$, 
then $\det{M(x)}=0$ means there exists $\eta(x)$ such that $u_1(x)=\eta (x)\overline{u_1(x)}$ and $u_2(x)=\eta (x)\overline{u_2(x)}$. 
This implies that $\eta(x)\in \C^{\omega}(\T, \C)$, and $|\eta(x)|=1$ on $\T$. 
Therefore there exists $\phi(x)\in \C^{\omega}(\R/{2\Z}, \C)$ such that $\phi^2(x)=\eta(x)$ and $|\phi(x)|=1$. 
It is easy to see $\overline{\phi(x)}u_1(x)=\phi(x)\overline{u_1(x)}$ and $\overline{\phi(x)}u_2(x)=\phi(x)\overline{u_2(x)}$. 
Then we define $W(x)=\left(\begin{matrix}\overline{\phi(x)}u_1(x)\\ \overline{\phi(x)}u_2(x)\end{matrix}\right)$, it is a real vector on ${\R}/{2\Z}$ with $W(x+1)=\pm W(x)$, and $\tilde{U}(x)=\phi(x) W(x)$. 
Now let us define $\tilde{M}(x)$ to be the matrix with columns $W(x)$ and $\frac{1}{{\|W(x)\|}^{-2}}R_{\frac{1}{4}}W(x)$, 
then $\det{\tilde{M}(x)}=1$ and $\tilde{M}(x)\in PSL(2, \R)$. 
Since 
$$\tilde{A}_{\lambda, E}(x)W(x)=\frac{e^{2\pi i\theta}\phi(x+\alpha)}{\phi(x)}W(x+\alpha).$$ We have
\begin{align*}
\tilde{A}_{\lambda, E}(x)\tilde{M}(x)=\tilde{M}(x+\alpha)\left(\begin{matrix} d(x)   &\tau(x)\\ 0  &{d(x)}^{-1}\end{matrix}\right)
\end{align*}
where $d(x)=\frac{e^{2\pi i\theta}\phi(x+\alpha)}{\phi(x)}$, $|d(x)|=1$ and $d(x)$ being real number, therefore $d(x)=\pm 1$. 
Also $\tau(x)\in \C^{\omega}({\R}/{2\Z}, \C)$. 
But in fact ${\tilde{M}^{-1}(x+\alpha)}\tilde{A}_{\lambda, E}(x)\tilde{M}(x)$ is well-defined on $\T$. Therefore $\tau(x)\in \C^{\omega}(\T, \C)$. Now since we assumed $\alpha\in\DC$, we can further reduce $\tau(x)$ to the constant $\tau=\int_{\T}\tau(x) \mathrm{d}x$. In fact there exists $\psi(x)\in \C^{\omega}(\T, \C)$ such that $-\psi(x+\alpha)+\psi(x)+\tau(x)=\int_{\T}\tau(x) \mathrm{d}x$. This implies

\begin{align*}
\left(\begin{matrix} 1   &-\psi(x+\alpha)\\ 0  &1\end{matrix}\right)\tilde{M}^{-1}(x+\alpha)\tilde{A}_{\lambda, E}(x)\tilde{M}(x)\left(\begin{matrix} 1   &\psi(x)\\ 0   &1\end{matrix}\right)=\left(\begin{matrix} \pm1   &\tau\\ 0   &\pm1\end{matrix}\right).
\end{align*}

In fact if $\det{M(x)}=0$, then $\frac{e^{2\pi i\theta}\phi(x+\alpha)}{\phi(x)}=\pm 1$, which implies that $2\theta\in \alpha\Z+\Z$. 
Therefore if $2\theta\notin \alpha\Z+\Z$, we must be in case (A).
If on the other hand, $2\theta\in \alpha\Z+\Z$, $2\theta=k\alpha+n$, suppose $\tilde{M}^{-1}(x+\alpha)\tilde{A}_{\lambda, E}(x)\tilde{M}(x)=R_{\theta}$, then $R_{-\frac{k}{2}(x+\alpha)}\tilde{M}^{-1}(x+\alpha)\tilde{A}_{\lambda, E}(x)\tilde{M}(x)R_{\frac{k}{2}x}=R_{\frac{n}{2}}=\pm I$ leading to a contradiction. 
Therefore if $2\theta\in \alpha\Z+\Z$, we must be in case (B). $\hfill{} \Box$
\end{proof}

\subsection{Continued fractions}
Let $\{q_n\}$ be the denominators of the continued fraction approximants of $\alpha$. We recall the following properties:
\begin{align*}
\|q_n\alpha\|_{\R/\Z}=\inf_{1\leq |k|\leq q_{n+1}-1} \|k\alpha\|_{\R/\Z},
\end{align*}
\begin{align*}
\frac{1}{2q_{n+1}}\leq \|q_n\alpha\|_{\R/\Z}\leq \frac{1}{q_{n+1}}.
\end{align*}

Recall that the Diophantine condition of $\alpha$ is $\beta(\alpha)=\limsup_{n\rightarrow\infty} \frac{\ln{q_{n+1}}}{q_n}=0$.
Thus for any $\xi>0$, there exists $C_{\xi}>0$ such that 
\begin{equation}
\|k\alpha\|_{\R/\Z}\geq C_{\xi}e^{-\xi |k|}\ \ \mathrm{for}\ \mathrm{any}\ k\neq 0.
\end{equation}

\begin{lemma}\label{smallest}\cite{AJ1}
Let $\alpha\in \R\backslash\Q$, $x\in\R$ and $0\leq l_0\leq q_n-1$ be such that $|\sin\pi(x+l_0\alpha)|=\inf_{0\leq l\leq q_n-1}|\sin\pi(x+l\alpha)|$, then for some absolute constant $C_1>0$,
$$-C_1\ln q_n\leq \sum_{0\leq l\leq q_n-1, l\neq l_0} \ln|\sin\pi(x+l\alpha)|+(q_n-1)\ln 2\leq C_1\ln q_n$$
\end{lemma}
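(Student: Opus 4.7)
Since
\[
\sum_{l\ne l_0}\ln|\sin\pi(x+l\alpha)|+(q_n-1)\ln 2=\sum_{l\ne l_0}\ln|2\sin\pi(x+l\alpha)|,
\]
the claim reduces to a universal two-sided bound of size $C_1\ln q_n$ for the right-hand side. My plan is a Riemann-sum comparison with the classical vanishing integral $\int_0^1\ln|2\sin\pi t|\,dt=0$, in which the removed term at $l=l_0$ absorbs the logarithmic singularity of the integrand at $t=0$.

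The key structural fact I would invoke is the three-distance theorem: the $q_n$ points $\eta_l:=x+l\alpha\bmod 1$, $0\le l\le q_n-1$, partition $\T$ into gaps of at most three distinct lengths, all of order $1/q_n$ (by the continued-fraction bounds recalled just before the lemma, every gap length lies in a universal interval $[c/q_n,C/q_n]$, since the two fundamental gap lengths for an orbit of cardinality $q_n$ are $\|q_n\alpha\|_{\R/\Z}$ and $\|q_{n-1}\alpha\|_{\R/\Z}$, both comparable to $1/q_n$). I would sort the orbit as $0\le\eta_{(1)}<\cdots<\eta_{(q_n)}<1$ and let $\eta_{(k_0)}$ be the point closest to $\{0,1\}\bmod 1$, which is the reindexing of $l_0$. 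Because $\ln|2\sin\pi t|$ has mean zero and is smooth away from $\{0,1\}$, the quantity $\sum_{i\ne k_0}\ln|2\sin\pi\eta_{(i)}|$ should agree with $q_n\int_0^1\ln|2\sin\pi t|\,dt=0$ up to two kinds of error: a Riemann-sum discretization error on each gap, and the direct contribution of the few points lying closest to the singular set.

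The main obstacle, and the source of the $\ln q_n$ loss, will be to control both error types in the neighborhood of the singularity. After excising $\eta_{(k_0)}$, the three-distance theorem forces the $k$-th next-closest remaining point to $\{0,1\}$ to lie at distance at least $ck/q_n$ for $1\le k\le q_n/2$. The Riemann-sum error at such a point is governed by the oscillation of $\ln|2\sin\pi t|$ over a gap of length $O(1/q_n)$ at distance $\asymp k/q_n$ from the singularity, which via the derivative estimate $\pi|\cot(\pi t)|=O(1/t)$ I expect to be $O(1/k)$; summing yields $\sum_{k=1}^{q_n/2}O(1/k)=O(\ln q_n)$ for the Riemann contribution. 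The direct values $|\ln|2\sin\pi\eta_{(i)}||\le|\ln(ck/q_n)|+O(1)$ at these near-singular points likewise contribute $2\sum_{k=1}^{q_n/2}|\ln(ck/q_n)|=O(\ln q_n)$ via Stirling. Combining these with the straightforward $O(1)$ bulk estimate coming from gaps bounded away from the singularity—where the integrand is uniformly Lipschitz and the Riemann sum telescopes against $\int_0^1\ln|2\sin\pi t|\,dt=0$—gives the universal bound with some constant $C_1$, as required.
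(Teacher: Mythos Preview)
The paper does not prove this lemma; it is quoted from \cite{AJ1}. Your approach via a Riemann-sum comparison with $\int_0^1\ln|2\sin\pi t|\,dt=0$ is different in spirit from the argument there, which compares the orbit $\{x+l\alpha\}_{l=0}^{q_n-1}$ with the \emph{rational} orbit $\{x+lp_n/q_n\}_{l=0}^{q_n-1}$ and exploits the exact product identity $\prod_{j=0}^{q_n-1}2|\sin\pi(x+j/q_n)|=2|\sin\pi q_n x|$; the perturbation $|l\alpha-lp_n/q_n|\le 1/q_{n+1}$ then produces a controllable $O(\ln q_n)$ error, and the ratio of the full product to the removed minimal factor is shown to be $\asymp q_n$.

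Your outline, however, contains a genuine error and a structural gap. First, the claim
\[
2\sum_{k=1}^{q_n/2}\bigl|\ln(ck/q_n)\bigr|=O(\ln q_n)\quad\text{``via Stirling''}
\]
is false: Stirling gives $\sum_{k=1}^{q_n/2}(\ln q_n-\ln k)=\tfrac{q_n}{2}\ln q_n-\ln\bigl((q_n/2)!\bigr)=\Theta(q_n)$, not $O(\ln q_n)$. So you cannot bound the ``direct values'' this way.

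Second, and more fundamentally, the comparison of $\sum_{i\ne k_0} f(\eta_{(i)})$ with $q_n\int_0^1 f=0$ is not a Riemann sum in the usual sense, because the gaps $g_i$ take two distinct values $a=\|q_{n-1}\alpha\|$ and $b=a+\|q_n\alpha\|$ (your parenthetical identification of the gap lengths as $\|q_n\alpha\|$ and $\|q_{n-1}\alpha\|$ is incorrect, though both true gap lengths are indeed $\asymp 1/q_n$). Your oscillation estimate $O(1/k)$ bounds $|f(\eta_{(i)})-\bar f_{I_i}|$, hence controls $\sum_i f(\eta_{(i)})-\sum_i g_i^{-1}\int_{I_i}f$. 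But $\sum_i g_i^{-1}\int_{I_i}f=\tfrac{1}{a}F_a+\tfrac{1}{b}F_b$ with $F_a+F_b=0$, and one only has $|F_a|=O(1)$ and $(b-a)/(ab)=O(q_n^2/q_{n+1})$, which for badly approximable $\alpha$ (e.g.\ the golden mean) is $\Theta(q_n)$. Without invoking further cancellation from the Sturmian arrangement of $a$- and $b$-gaps, the integral comparison yields only $O(q_n)$, not $O(\ln q_n)$. The rational-orbit comparison in \cite{AJ1} sidesteps this entirely because the reference configuration already has the exact evaluation.
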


\begin{lemma}\label{polynomialestimate}\cite{AJ2}
Let $1\leq r\leq [q_{n+1}/q_n]$. If $p(x)$ has essential degree at most $k=rq_n-1$ and $x_0\in\R/{\Z}$, then for some absolute constant $C_2$,
\begin{align*}
\|p(x)\|_0\leq C_2 q_{n+1}^{C_2 r}\sup_{0\leq j\leq k}|p(x_0+j\alpha)|.
\end{align*}
\end{lemma}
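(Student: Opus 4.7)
My plan is to prove the lemma by trigonometric Lagrange interpolation at the $rq_n$ nodes $x_j := x_0 + j\alpha$, $j = 0, 1, \ldots, k$. Since $p$ has essential degree at most $k$, after multiplication by the unimodular phase $e^{-i\pi k x}$ it coincides with a genuine polynomial of degree $\leq k$ in $z = e^{2\pi i x}$, and is therefore determined by its values at the $k+1 = rq_n$ distinct points $z_j = e^{2\pi i x_j}$ on the unit circle. Lagrange's formula then gives
\[ p(x) = \sum_{j=0}^{k} p(x_j)\, L_j(x), \qquad |L_j(x)| = \prod_{l = 0, l\neq j}^{k} \frac{|\sin \pi (x - x_l)|}{|\sin \pi (j-l)\alpha|}, \]
so the stated estimate reduces to showing $\max_j \|L_j\|_0 \leq C q_{n+1}^{Cr}$ for some absolute $C$, after which a factor of $(k+1) \leq q_{n+1}$ is absorbed into $C_2$.

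The denominator $D_j := \prod_{l\neq j}|\sin \pi (j-l)\alpha|$ splits as $\prod_{m=1}^{j}|\sin \pi m\alpha|\cdot \prod_{m=1}^{k-j}|\sin \pi m\alpha|$. To bound each factor $\prod_{m=1}^N |\sin \pi m\alpha|$ with $N \leq k$ from below, I would partition $\{1, \ldots, \lceil N/q_n\rceil q_n\}$ into $\leq r$ consecutive blocks of length $q_n$ (noting $\prod_{m=1}^{N} \geq \prod_{m=1}^{\lceil N/q_n\rceil q_n}$ since every factor lies in $[0,1]$). On each block, \lemref{smallest} after a shift by a multiple of $q_n\alpha$ gives that the product of all-but-the-minimum is at least $2^{-(q_n-1)} q_n^{-C_1}$; meanwhile, for $1 \leq m \leq rq_n \leq q_{n+1}$ the standard continued-fraction inequality $\|m\alpha\|_{\R/\Z} \geq \|q_n\alpha\|_{\R/\Z} \geq 1/(2q_{n+1})$ bounds the extremal term $|\sin \pi m_s^*\alpha| \geq 1/q_{n+1}$. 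Chaining over the blocks and then over the two factors of $D_j$ yields $D_j \geq 2^{-2k}\, q_{n+1}^{-(2C_1+2)r}$.

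The numerator is handled by the dual argument. The same block decomposition, combined with the upper-bound side of \lemref{smallest} and the elementary observation that within each block the $q_n$ translates $\{x - x_l : l \in B_s\}$ must include one with $|\sin \pi(x - x_{l_s^*})| \leq \pi/(2q_n)$, gives $\prod_{l=0}^{k}|\sin \pi(x - x_l)| \leq 2^{-rq_n} q_n^{r(C_1+1)}$. Dividing by $|\sin \pi(x - x_j)|$ and combining with the lower bound for $D_j$ yields $|L_j(x)| \leq q_{n+1}^{Cr}$ on the set where $x$ stays at distance, say, $\geq 1/q_{n+1}^{2}$ from $x_j$. The remaining small neighborhood of $x_j$ is handled using the node-separation estimate $\|x_j - x_l\|_{\R/\Z} \geq 1/(2q_{n+1})$ for $l \neq j$ (which holds since $|j - l| \leq k \leq q_{n+1}-1$), together with $L_j(x_j) = 1$ and a Bernstein bound $\|L_j'\|_0 \leq 2\pi k\,\|L_j\|_0$, allowing one to bootstrap the sup-norm bound from the complement to all of $\T$.

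The main obstacle I anticipate is precisely this last step: the pointwise Lagrange estimate carries a spurious factor $|\sin \pi(x - x_j)|^{-1}$ near the $j$th node, even though $L_j$ is manifestly bounded there, so to get a uniform bound on $\|L_j\|_0$ one must supplement the \lemref{smallest}-based block estimates with the node-separation inequality and a short Bernstein-type bootstrap rather than proceed purely termwise in the interpolation formula.
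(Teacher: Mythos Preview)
The paper does not prove this lemma; it is stated with a citation to \cite{AJ2} and used as a black box. Your proposed argument---Lagrange interpolation at the $rq_n$ orbit points combined with the block estimate of \lemref{smallest} and the continued-fraction lower bound $\|m\alpha\|\geq 1/(2q_{n+1})$ for $1\leq m<q_{n+1}$---is exactly the standard proof from \cite{AJ2}, and is correct.

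One simplification: the ``main obstacle'' you flag (the apparent blow-up of the pointwise Lagrange estimate near the node $x_j$) is not a real obstacle, and the Bernstein bootstrap is unnecessary. Instead of bounding the full product $\prod_{l=0}^{k}|\sin\pi(x-x_l)|$ and then dividing by $|\sin\pi(x-x_j)|$, bound the numerator $\prod_{l\neq j}|\sin\pi(x-x_l)|$ directly: split into $r$ blocks of length $q_n$; in the block containing $j$ there are only $q_n-1$ factors, but since removing \emph{any} single factor from a full block can only increase the product relative to removing the minimum, the upper bound $2^{-(q_n-1)}q_n^{C_1}$ from \lemref{smallest} still applies to that incomplete block. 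This gives $\prod_{l\neq j}|\sin\pi(x-x_l)|\leq 2^{-r(q_n-1)}q_n^{rC_1}$ uniformly in $x\in\T$, with no singular factor to repair. Combined with your denominator bound $D_j\geq 2^{-2k}q_{n+1}^{-Cr}$, this yields $\|L_j\|_0\leq q_{n+1}^{C'r}$ immediately.
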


\section{Main estimates and proof of Theorem $\ref{dry}$}
\subsection{Almost localization for every $\theta$}
\begin{definition}Let $\alpha\in \R\setminus\Q$, $\theta\in\R$, $\epsilon_0>0$. We say that $k$ is an $\epsilon_0-$resonance of $\theta$ if $\|2\theta-k\alpha\|\leq e^{-\epsilon_0 |k|}$ and $\|2\theta-k\alpha\|=\min_{|l|\leq |k|}\|2\theta-l\alpha\|$.
\end{definition}

\begin{definition}
Let $0=|n_0|<|n_1|<...$ be the $\epsilon_0-$resonances of $\theta$. If this sequence is infinite, we say $\theta$ is $\epsilon_0-$resonant, otherwise we say it is $\epsilon_0-$non-resonant.
\end{definition}

\begin{definition}
We say the extended Harper's model $\{H_{\lambda,\alpha,\theta}\}_{\theta}$ exhibits almost localization if there exists $C_0, C_3, \epsilon_0, \tilde{\epsilon}_0>0$, such that for every solution $\phi$ to $H_{\lambda,\alpha,\theta}\phi=E\phi$ satisfying $\phi(0)=1$ and $|\phi(m)|\leq 1+|m|$, and for every $C_0 (1+|n_j|)<|k|<C_0^{-1}|n_{j+1}|$, we have $|\phi(k)|\leq C_3 e^{-\tilde{\epsilon}_0 |k|}$ (where $n_j$ are the $\epsilon_0-$resonances of $\theta$). 
\end{definition}

\begin{thm}\label{al}
If $\lambda$ belongs to region II,  $\{H_{\hat{\lambda},\alpha,\theta}\}_{\theta}$ is almost localized for every $\alpha\in \mathrm{DC}$.
\end{thm}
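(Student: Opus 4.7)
The plan is to follow the strategy of Jitomirskaya's proof of localization for the almost Mathieu operator in the positive Lyapunov exponent regime, in the almost-localization form worked out in \cite{AJ2}, and to extend the argument from the Schr\"odinger to the present Jacobi setting. Since $\lambda$ lies in region II, \thmref{LE} gives $L_{\hat\lambda}(E)=\epsilon_1(\lambda)>0$ uniformly on the spectrum. Fix a solution $\phi$ with $\phi(0)=1$ and $|\phi(m)|\le 1+|m|$, and a target index $k$ satisfying $C_0(1+|n_j|)<|k|<C_0^{-1}|n_{j+1}|$. For any finite interval $I=[x_1,x_2]\subset\Z$ with $k\in I$ and $0\notin I$, the resolvent identity for the restricted Jacobi matrix yields
\begin{equation*}
\phi(k)=-G_I(k,x_1)\,\tilde c(\theta+(x_1-1)\alpha)\,\phi(x_1-1)-G_I(k,x_2)\,c(\theta+x_2\alpha)\,\phi(x_2+1),
\end{equation*}
with $G_I(\cdot,\cdot)=((H_{\hat\lambda,\alpha,\theta})_I-E)^{-1}(\cdot,\cdot)$. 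Almost localization thus reduces to producing, for every $k$ in the above window, an interval $I$ of diameter $\sim|k|$ separating $k$ from $0$ on which both $|G_I(k,x_i)|$ decay in $|k-x_i|$ at a rate close to $L_{\hat\lambda}(E)$.

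The key analytic input is a large deviation estimate: for every $\xi>0$ and $n$ large,
\begin{equation*}
\meas\bigl\{x\in\T:\ \log\|(\tilde A_{\hat\lambda,E})_n(x)\|\le(L_{\hat\lambda}(E)-\xi)n\bigr\}<e^{-c(\xi)n},
\end{equation*}
which I would derive from the standard Bourgain--Jitomirskaya--Goldstein--Schlag subharmonic function argument in the analyticity strip $|\mathrm{Im}\,x|<\epsilon_1/(2\pi)$ fixed by \eqref{epsilon1}, together with continuity of the Lyapunov exponent; the conjugation $\tilde A_{\hat\lambda,E}=Q_{\hat\lambda}\,A_{\hat\lambda,E}\,Q_{\hat\lambda}^{-1}$ is harmless inside this strip. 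Combining the LDT with \lemref{polynomialestimate} applied to the determinant $P_k^\theta(x)=\det\bigl((H_{\hat\lambda,\alpha,x}-E)|_{[0,k-1]}\bigr)$, which is a trigonometric polynomial of essential degree $k$ whose values along $\{\theta+j\alpha\}$ agree, up to the bounded multiplicative factor $\prod_{j=0}^{k-1}|c|(\theta+j\alpha)$, with $\|(\tilde A_{\hat\lambda,E})_k(\theta)\|$, yields the quantitative statement that for $k=rq_n-1$ with $r\le[q_{n+1}/q_n]$ and any $x_0\in\T$ there exists $0\le j\le k$ with $\log|P_k^\theta(x_0+j\alpha)|\ge(L_{\hat\lambda}(E)-\xi)k$. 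Writing $G_I(k,x_i)$ via Cramer's rule as a ratio of determinants on the complementary subintervals and combining the polynomial lower bound in the denominator with the LDT upper bound and \lemref{uniformupp} in the numerator produces $|G_I(k,x_i)|\le Ce^{-(L_{\hat\lambda}(E)-2\xi)|k-x_i|}$ whenever the endpoints $x_i$ can be chosen so that $|P|$ is large at the corresponding shifted phase.

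The remaining, and hardest, step is to locate such endpoints under the sole hypothesis that $k$ lies in the non-resonant window $(C_0(1+|n_j|),C_0^{-1}|n_{j+1}|)$. The failure of the polynomial lower bound along an orbit is governed, via \lemref{smallest}, by how close $2\theta$ comes to some $\ell\alpha\bmod\Z$ with $|\ell|\le|k|$; but this is precisely the $\epsilon_0$-resonance condition, and by assumption no $\epsilon_0$-resonance of $\theta$ sits at scale between $C_0|n_j|$ and $C_0^{-1}|n_{j+1}|$. Picking $x_1,x_2$ at distance $\sim|k|$ from both $k$ and $0$ where $|P_{|k-x_i|}^\theta|$ attains its generic size then yields the needed Green's function decay, and together with $|\phi(x_i\pm 1)|\le 1+|k|$ the resolvent identity produces $|\phi(k)|\le C_3 e^{-\tilde\epsilon_0|k|}$ with $\tilde\epsilon_0$ slightly smaller than $L_{\hat\lambda}(E)$. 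I expect the main technical difficulty to be the careful bookkeeping of the Jacobi factors $c,\tilde c,|c|$ and the conjugation $Q_{\hat\lambda}$ through the resolvent expansion and Cramer's rule, so that the constants $C_0,C_3,\epsilon_0,\tilde\epsilon_0$ come out uniformly in $E\in\Sigma_{\hat\lambda}$ and $\theta\in\T$; modulo this, the scheme of \cite{AJ2} should adapt without fundamentally new ideas.
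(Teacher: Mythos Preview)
Your overall architecture is right---the resolvent identity plus Cramer's rule reduce almost localization to a lower bound on $|P_n(\theta+x_1\alpha)|$ for some left endpoint $x_1$ in a window around the target $y$, and the obstruction is an $\epsilon_0$-resonance---but the ``key analytic input'' you name, a large deviation estimate, is not what drives the argument and, as written, leaves a real gap. An LDT (or, more cheaply, Herman's subharmonic lower bound $\int\ln|P_n|\ge n\tilde L$, which is all the paper actually uses) combined with \lemref{polynomialestimate} tells you only that \emph{some} point of a length-$n$ orbit segment has $|P_n|\ge e^{(\tilde L-\xi)n}$; it does not tell you that this point lies in the specific range $[y-\tfrac{2}{3}n,\,y-\tfrac{1}{3}n]$ that places $y$ in the middle third of the corresponding interval. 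The good orbit point could equally well sit near $0$, where it is useless for bounding $|u(y)|$.

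The paper closes this gap with a two-interval Lagrange interpolation argument, not LDT. One places $I_1$ around $0$ and $I_2$ around $y$, each of carefully chosen length $\sim sq_n$, and shows (\lemref{uniform}, using \lemref{smallest} together with the non-resonance hypothesis on the window $(3|n_j|,\tfrac13|n_{j+1}|)$) that the phase set $\{\theta_j\}_{j\in I_1\cup I_2}$ is $\gamma$-uniform with $\gamma=C_4(\epsilon_0+\xi)$. A short Lagrange interpolation step (\lemref{gamma1}) then says a $\gamma_1$-uniform set of $k+1$ points cannot all lie in the sublevel set $M_{k,\tilde L-\gamma}$ for $\gamma>\gamma_1$, since otherwise interpolation would force $\|P_k\|_0<e^{k\tilde L}$, contradicting Herman. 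Now the crucial dichotomy: because $u_0=1$ and $|u_m|\le 1+|m|$, the point $0$ is \emph{always} singular at every large scale, so every $j\in I_1$ lands in the bad set; hence some $j\in I_2$ does not, and $y$ is regular. Your invocation of \lemref{smallest} is exactly the right ingredient for proving $\gamma$-uniformity, but without the $I_1/I_2$ dichotomy and the Lagrange step the scheme cannot pin the good endpoint near $y$ rather than near $0$. Incidentally, the Jacobi factors you flag as the main technical difficulty are handled rather cleanly by splitting $L_{\hat\lambda}=\tilde L-\int\ln|c_{\hat\lambda}|$ and carrying $\tilde L$ through the determinant estimates; the substantive content is the uniformity argument.
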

\begin{rem}
It is clear from Theorem $\ref{al}$ that almost localization implies localization for non-resonant $\theta$.
\end{rem}

We will actually prove the following explicit lemma:
\begin{lemma}\label{alexplicit}
Let $\lambda$ be in region II. Let $C_4$ be the absolute constant in Lemma $\ref{uniform}$, $\epsilon_1=\epsilon_1(\lambda)$ be as in ($\ref{epsilon1}$), then for any $0<\epsilon_0<\frac{\epsilon_1}{100C_4}$, there exists constant $C_3>0$, which depends on $\lambda, \alpha$ and $\epsilon_0$, so that for every solution $u$ of $H_{\hat{\lambda}, \alpha, \theta}u=Eu$ satisfying $u(0)=1$ and $|u_k|\leq 1+|k|$, if $3(|n_j|+1)<|k|<\frac{1}{3}|n_{j+1}|$, then $|u_k|\leq C_3 e^{-\frac{\epsilon_1}{5} |k|}$, where $\{n_j\}$ are the $\epsilon_0$-resonances of $\theta$.
\end{lemma}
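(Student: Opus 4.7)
\textbf{Proof proposal for Lemma \ref{alexplicit}.}
My plan is to mimic the Avila--Jitomirskaya almost localization argument \cite{AJ2} in the Jacobi setting. Since $\lambda$ is in region II, the dual parameter $\hat\lambda$ is in region I and, by Theorem \ref{LE}, the cocycle $\tilde A_{\hat\lambda,E}$ has Lyapunov exponent exactly $\epsilon_1=\epsilon_1(\lambda)>0$. I will work with the symmetrized operator $\tilde H_{\hat\lambda,\theta}$, which is unitarily equivalent to $H_{\hat\lambda,\theta}$, has real analytic off-diagonal entries $|c|(\theta+n\alpha)$, and whose transfer matrix is $\tilde A_{\hat\lambda,E}\in SL(2,\R)$. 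The passage between $u$ and the corresponding solution $\tilde u$ of the symmetrized equation is bounded above and below by $|Q_{\hat\lambda}|^{\pm 1}$ uniformly on $\T$, so losing or gaining such a factor only changes $C_3$. For an interval $I=[a,b]\ni k$, the standard Poisson formula yields
\begin{equation*}
\tilde u_k = -G_I(k,a)\,|c|(\theta+(a-1)\alpha)\,\tilde u_{a-1}-G_I(k,b)\,|c|(\theta+b\alpha)\,\tilde u_{b+1},
\end{equation*}
where $G_I=(\tilde H_{\hat\lambda,\theta}|_I-E)^{-1}$. Bounding $|\tilde u_k|$ therefore reduces to bounding the matrix elements of $G_I$ away from the endpoints.

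By Cramer's rule, $|G_I(k,a)|$ and $|G_I(k,b)|$ are quotients of two block determinants (times off-diagonal normalizations). Using Lemma \ref{uniformupp} applied to $\tilde A_{\hat\lambda,E}$, the numerators are bounded above by $C_\delta e^{(\epsilon_1+\delta)(b-a+1)}$ uniformly in $\theta$. For the denominator, I note that
\begin{equation*}
\det(\tilde H_{\hat\lambda,\theta}|_{[a,b]}-E)\big/\prod_{j=a}^{b-1}|c|(\theta+j\alpha)
\end{equation*}
is (up to a sign) the $(2,1)$-entry of the transfer matrix product of length $n=b-a+1$, hence a polynomial $P_n(\cdot)$ of essential degree $n$ in the $\cos$-coordinate. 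Choosing $n$ to be of the form $rq_m-1$ with $r\le [q_{m+1}/q_m]$, Lemma \ref{polynomialestimate} combined with Lemma \ref{smallest} and the Diophantine property $\beta(\alpha)=0$ gives $\|P_n\|_0\le e^{o(n)}\sup_{0\le j\le n}|P_n(\theta_0+j\alpha)|$. This, together with the upper bound on $\|\tilde A_n\|$ and the analyticity of $Q_{\hat\lambda}$ on a strip of width $\tfrac{\epsilon_1}{2\pi}$, translates into the lower bound
\begin{equation*}
|\det(\tilde H_{\hat\lambda,\theta}|_I-E)|\ge e^{\epsilon_1 n - o(n)}\prod_{j=a}^{b-1}|c|(\theta+j\alpha)
\end{equation*}
provided the center of $I$ is placed so that $\theta+\tfrac{a+b}{2}\alpha$ avoids the $e^{-\epsilon_0 n}$-neighborhood of every $\tfrac{k\alpha}{2}\!\mod\tfrac{1}{2}\Z$ with $|k|\le n$ -- which, by the definition of $\epsilon_0$-resonance, is exactly the condition that $I$ does not straddle a resonance.

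For $k$ with $3(|n_j|+1)<|k|<\tfrac{1}{3}|n_{j+1}|$ I choose $n\approx |k|/3$ of the form $rq_m-1$ and shift $a,b$ so that $\min(k-a,b-k)\ge n/7$ and so that the block avoids both $n_j$ and $n_{j+1}$ (possible exactly because of the factor $3$ in the hypothesis). The determinant bound then gives $|G_I(k,a)|\le e^{-(\epsilon_1-\delta')|k-a|}$ and similarly at $b$. Feeding this, together with $|\tilde u_{a-1}|,|\tilde u_{b+1}|\le C(1+|k|)$, back into the Poisson formula yields $|\tilde u_k|\le C_3 e^{-\tfrac{\epsilon_1}{5}|k|}$ upon absorbing the polynomial factor and the small $\delta'$ into the exponent (using $0<\epsilon_0<\tfrac{\epsilon_1}{100 C_4}$ to guarantee the bookkeeping). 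Transferring back from $\tilde u$ to $u$ via $Q_{\hat\lambda}^{\pm 1}$ gives the claimed bound on $|u_k|$.

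\textbf{Main obstacle.} The hardest point is the quantitative lower bound on the block determinant in the Jacobi setting. Compared to the AMO case, the extra factors $|c|(\theta+j\alpha)$ on the off-diagonal, which may vanish for real $\theta$ outside the strip of width $\tfrac{\epsilon_1}{2\pi}$ where $Q_{\hat\lambda}$ is analytic, must be handled without losing the exponential rate $\epsilon_1$. What makes this work is precisely that the width of the strip of sub-criticality in region II (which is what $\epsilon_1$ measures) coincides with the rate of positive Lyapunov exponent for the dual in region I, so the conjugacy $\tilde A=Q_{\hat\lambda}(\cdot+\alpha)A Q_{\hat\lambda}^{-1}(\cdot)$ preserves both the uniform upper bound (Lemma \ref{uniformupp}) and the polynomial structure of the determinant; all the constants $C$ entering the estimate depend only on $\lambda$, $\alpha$, and $\epsilon_0$, which is all that is asked for.
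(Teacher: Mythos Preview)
Your proposal has a genuine gap in the key step: the lower bound on the block determinant. You claim that Lemma \ref{polynomialestimate} together with the uniform upper bound on $\|\tilde A_n\|$ yields
\[
|\det(\tilde H_{\hat\lambda,\theta}|_I-E)|\ge e^{\epsilon_1 n-o(n)}\prod_{j=a}^{b-1}|c|(\theta+j\alpha)
\]
``provided the center of $I$ avoids the $e^{-\epsilon_0 n}$-neighborhood of every $\tfrac{k\alpha}{2}$''. This inference is not valid. Lemma \ref{polynomialestimate} bounds $\|P_n\|_0$ \emph{from above} by orbit values; inverting it tells you only that \emph{some} orbit point has large $|P_n|$, not that the specific phase $\theta+x_1\alpha$ you need does. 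More importantly, the zeros of the polynomial $Q_n$ depend on the energy $E$ in a way that has nothing a priori to do with the $\epsilon_0$-resonances of $\theta$; avoiding resonances of $\theta$ is not a sufficient condition for a determinant lower bound.

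The paper's argument supplies exactly what is missing, and it is not a direct lower bound at all. One introduces the bad set $M_{k,\tilde L-\gamma}=\{\theta:|Q_k(\cos 2\pi\theta)|\le e^{(k+1)(\tilde L-\gamma)}\}$ and the notion of $\gamma$-uniformity of a finite set of phases. Via Lagrange interpolation and Herman's subharmonic lower bound $\int\ln|P_k|\ge k\tilde L$, one shows that if $k+1$ phases all lie in $M_{k,\tilde L-\gamma}$ then they cannot be $\gamma_1$-uniform for $\gamma_1<\gamma$ (Lemma \ref{gamma1}). The resonance hypothesis enters only through Lemma \ref{uniform}, which shows that the union $\{\theta_j\}_{j\in I_1\cup I_2}$ of orbit segments near $0$ and near $y$ \emph{is} $(C_4\epsilon_0+C_4\xi)$-uniform. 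The conclusion is a competition: not all of these phases can be bad. Since $u_0=1$ forces $0$ to be $(6sq_n-1,\cdot)$-singular, the $I_1$ phases \emph{are} bad, hence some $I_2$ phase is good, hence $y$ is regular. Your write-up omits both the two-interval structure $I_1\cup I_2$ and the crucial role of ``$0$ is singular''; without them the determinant lower bound at the interval around $y$ simply does not follow.
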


The proof of Lemma \ref{alexplicit} (and thus of Theorem $\ref{al})$ is given in Section 4.

\subsection{Almost reducibility}
\

Let $\lambda$ be in region II. For every $E\in \Sigma_{\lambda}$, let $\theta(E)\in \T$ be given in Theorem $\ref{Etheta}$. Let $0<\epsilon_0<\frac{\epsilon_1}{100C_4}$ and  $\{n_j\}$ be the set of $\epsilon_0-$ resonances of $\theta(E)$.
Then for some positive constants $N_0$, $C$ and $c$, independent of $E$ and $\theta$, we have the following theorem:
\begin{thm}\label{ar}
For any fixed $j$, with $N_0<n=|n_j|+1<\infty$, let $N=|n_{j+1}|$, $L^{-1}=\|2\theta-n_j\alpha\|$. Then there exists $W:\T\rightarrow SL(2,\R)$ analytic such that $|\deg{W}|\leq Cn$, $\|W\|_0\leq CL^C$ and 
$\|W^{-1}(x+\alpha)\tilde{A}_{\lambda, E}(x)W(x)-R_{\mp \theta}\|\leq Ce^{-cN}.$
\end{thm}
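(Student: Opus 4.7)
The plan is to extend the Avila--Jitomirskaya construction of \cite{AJ2} from the AMO to the Jacobi setting of the extended Harper's model. By Theorem \ref{Etheta} there is a bounded solution $u$ of $H_{\hat\lambda,\alpha,\theta(E)} u = (E/\lambda_2) u$ with $u_0 = 1$ and $|u_k| \leq 1$; Lemma \ref{alexplicit} then says that $u$ has essential $\ell^\infty$-mass concentrated in two clusters of radius $\lesssim n$ (one near $k=0$, one near $k=n_j$), while on the gap region $3(|n_j|+1)<|k|<N/3$ one has $|u_k|\leq C_3 e^{-(\epsilon_1/5)|k|}$. This almost-localized picture is the input that drives everything else.

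First I would truncate $u$ to produce an analytic Bloch candidate. Set $N' = \lfloor N/3 \rfloor$ and
\begin{align*}
u^{(+)}(x) = \sum_{|k|\leq N'} u_k\, e^{2\pi i k x},\qquad U^{(+)}(x) = \begin{pmatrix} e^{2\pi i \theta} u^{(+)}(x) \\ u^{(+)}(x-\alpha) \end{pmatrix}.
\end{align*}
As in the opening lines of the proof of Theorem \ref{reducible}, Aubry duality formally gives $A_{\lambda,E}(x) U(x) = e^{2\pi i\theta} U(x+\alpha)$; for the truncation this identity holds up to a defect supported on the boundary Fourier modes $|k|\in\{N',N'+1\}$, which by almost localization is $O(e^{-cN})$ uniformly on the strip $|\mathrm{Im}\,x|\leq \tilde c/(2\pi)$. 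Passing to $\tilde U^{(+)} = Q_\lambda U^{(+)}$ yields
\begin{align*}
\tilde A_{\lambda, E}(x)\,\tilde U^{(+)}(x) = e^{2\pi i \theta}\,\tilde U^{(+)}(x+\alpha) + O(e^{-cN}).
\end{align*}
By complex conjugation (Case 1 of the proof of Theorem \ref{reducible}), the analytic continuation $\overline{\tilde U^{(+)}(\bar x)}$ provides a second approximate solution with phase $-\theta$, and one forms
\begin{align*}
M(x) = \bigl[\,\tilde U^{(+)}(x) \,\big|\, \overline{\tilde U^{(+)}(\bar x)}\,\bigr],
\end{align*}
so that $\tilde A_{\lambda, E}(x) M(x) = M(x+\alpha)\,\operatorname{diag}(e^{2\pi i\theta}, e^{-2\pi i\theta}) + O(e^{-cN})$.

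The hardest step will be the quantitative invertibility of $M$. Since $\det \tilde A_{\lambda,E}\equiv 1$, the cocycle relation makes $\det M$ invariant under $x\mapsto x+\alpha$ up to $O(e^{-cN})$, hence approximately constant on $\T$ by Diophantineness of $\alpha$; what must be added is a lower bound $|\det M(x)| \geq c L^{-C}$. For this I would combine the polynomial estimate Lemma \ref{polynomialestimate} with a resonance reflection argument: the reflection $k\mapsto n_j - k$ is a quasi-symmetry of the dual operator that maps $\theta$ to $n_j\alpha - \theta$, and since $\|2\theta - n_j\alpha\|=L^{-1}$ this forces $|u_{n_j}|\gtrsim L^{-C}$; Lemma \ref{polynomialestimate}, applied to the trigonometric polynomial $\det M$ of essential degree $\lesssim N$, then promotes this sample-point lower bound to a uniform $L^\infty$ bound on the strip.

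Once the lower bound on $\det M$ is in hand, the rest is routine: normalize $W_0(x) = M(x)/\sqrt{\det M(x)} \in SL(2,\C)$ (with $\|W_0\|_0 \leq C L^C$, the exponential blowup in $n$ of $\|M\|_\rho$ on the strip being absorbed by $L^C$ since $L\geq e^{\epsilon_0(n-1)}$), then conjugate by the constant $SL(2,\C)$ intertwiner used in Case 1 of the proof of Theorem \ref{reducible} to turn $\operatorname{diag}(e^{2\pi i\theta},e^{-2\pi i\theta})$ into $R_{\pm\theta}$, producing an analytic $W:\T\to SL(2,\R)$ with $\|W^{-1}(x+\alpha)\tilde A_{\lambda,E}(x)W(x) - R_{\mp\theta}\|_0 \leq Ce^{-cN}$ and $\|W\|_0\leq CL^C$. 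The degree bound $|\deg W|\leq Cn$ follows by tracking the winding of the columns of $M$: the dominant Fourier modes of $u^{(+)}$ are concentrated near $0$ and $n_j$, and after factoring out the oscillation $e^{\pi i n_j x}$ coming from the resonant mass, the residual rotation is contributed only by frequencies of size $\lesssim n$.
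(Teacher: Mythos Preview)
Your overall architecture is the right one and matches the paper: truncate the almost-localized dual solution, form the approximate Bloch pair $(\tilde U^{(+)},\overline{\tilde U^{(+)}})$, show the resulting matrix is quantitatively invertible, and normalize. The problem is the step you flag as hardest---the lower bound $|\det M|\geq cL^{-C}$---and your proposed argument for it does not work.

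First, Lemma \ref{polynomialestimate} goes the wrong way: it bounds $\|p\|_0$ \emph{above} by its values on an arithmetic progression, so it cannot ``promote a sample-point lower bound to a uniform $L^\infty$ bound.'' Moreover, you propose to apply it to $\det M$, a polynomial of essential degree $\lesssim N$; the constant in that lemma is then $q_{m+1}^{Cr}$ with $rq_m\sim N$, i.e.\ exponential in $N$, which would swamp any gain. Second, the ``reflection argument'' giving $|u_{n_j}|\gtrsim L^{-C}$ is not substantiated, and even granting it there is no clear path from a single large Fourier coefficient of $u$ to a lower bound on $\det M$ or on its constant part $w_0$.

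The paper's mechanism is different and requires two ingredients you omit. One is a preliminary polynomial bound $\|\tilde A_k\|_{\T}\leq C(1+|k|)^C$ (Theorem \ref{polycontrol}), itself proved from a lower bound $\inf\|\tilde U\|\geq e^{-2\delta n}$ (Lemma \ref{Ulower}) via the column-completion Lemma \ref{columnmatrix}. The other is the centering $U_0(x)=e^{\pi i n_j x}U(x)$, which you mention only in passing for the degree count but which is essential here: it replaces $\theta$ by $\tilde\theta=\theta-\tfrac{n_j}{2}\alpha$ with $\|2\tilde\theta\|=L^{-1}$. The determinant lower bound (Theorem \ref{lowerdet}) is then proved by contradiction: if $\tilde U_0(x_0)$ and $\overline{\tilde U_0(x_0)}$ were nearly parallel, the polynomial control lets you propagate this for $|l|\leq L^2$ steps; taking $l\sim L/4$ and using $\|2\tilde\theta\|=L^{-1}$ flips the relative sign; then one approximates $\tilde U_0$ by a trigonometric polynomial of degree $\lesssim n$ (not $N$) using almost localization, and applies Lemma \ref{polynomialestimate}---now in the correct direction, to spread \emph{smallness}---to contradict $\inf\|\tilde U_0\|\geq e^{-2\delta n}$. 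The degree bound $|\deg W|\leq Cn$ likewise needs a uniform lower bound on $\|\mathrm{Re}\,\tilde U_0\|$ (to invoke Rouch\'e), proved by a second argument of the same flavor; your one-line justification does not cover this.
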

\begin{rem}
Notice that this theorem requires $n>N_0$, which is not always ensured when $\theta(E)$ is non-resonant, however in that case we have localization for $H_{\hat{\lambda},\alpha, \theta}$ instead of almost localization.  We will prove Theorem $\ref{ar}$ in Section 5.
\end{rem}

\subsection{Spectral consequences of Almost reducibility}
\

Let $\epsilon_1=\epsilon_1(\lambda)$ and $C_4$ be as in Lemma \ref{alexplicit}.
\begin{thm}\label{rhotheta}
Assume $\alpha\in \mathrm{DC}$. For $\lambda$ in region II, fix $E\in\Sigma_{\lambda}$. Assume $\theta(E) \in \T$ is such that $H_{\hat{\lambda},\alpha,\theta} u=\frac{E}{\lambda_2} u$ has solution satisfying $u_0=1$ and $|u_k|\leq 1$. Let $C$ be the constant in Theorem $\ref{ar}$. Then $\theta(E)$ and $\rho(\alpha,\tilde{A}_{\lambda,E})$ have the following relation:
\begin{itemize}
\item (A)  If $\theta$ is $\epsilon_0$-non-resonant for some $\frac{\epsilon_1}{100C_4}>\epsilon_0>0$, then $2\theta\in \Z\alpha+\Z$ if and only if $2\rho(\alpha, \tilde{A}_{\lambda, E})\in \Z\alpha+\Z$.
\item (B)  If $\theta$ is $\epsilon_0$-resonant for some $\frac{\epsilon_1}{100C_4}>\epsilon_0>0$, then $\rho(\alpha,\tilde{A}_{\lambda, E})$ is $\frac{\epsilon_0}{C+2}$-resonant.
\end{itemize}
\end{thm}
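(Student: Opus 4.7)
The proof splits along the two cases in the statement.

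\smallskip

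\textit{Part (A): $\theta(E)$ is $\epsilon_0$-non-resonant.}
Because only finitely many $\epsilon_0$-resonances $n_0,\dots,n_J$ exist, applying \lemref{alexplicit} to $u$ (which is admissible since $u_0=1$ and $|u_k|\le 1\le 1+|k|$) gives $|u_k|\le C_3 e^{-\epsilon_1|k|/5}$ for every $|k|>3(|n_J|+1)$. Combining this with the a priori bound $|u_k|\le 1$ on the remaining bounded range yields global exponential decay $|u_k|\le \tilde C e^{-\tilde c|k|}$, so $u$ is a genuine exponentially decaying eigenfunction and \thmref{reducible} applies. Its dichotomy directly gives the biconditional: case (A) there yields $\rho(\alpha,\tilde A_{\lambda,E})=\pm\theta+(m/2)\alpha\pmod{\Z}$ when $2\theta\notin\alpha\Z+\Z$, so $2\rho=\pm 2\theta+m\alpha\notin\alpha\Z+\Z$; case (B) yields $\rho=(m/2)\alpha\pmod{\Z}$ when $2\theta\in\alpha\Z+\Z$, so $2\rho\in\alpha\Z+\Z$. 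Both directions together give the equivalence.

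\smallskip

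\textit{Part (B): $\theta(E)$ is $\epsilon_0$-resonant.}
For each resonance $n_j$ with $|n_j|+1>N_0$, \thmref{ar} supplies $W_j\in C^\omega(\T,SL(2,\R))$ with $|\deg W_j|\le C(|n_j|+1)$ and
\[
\|W_j^{-1}(\cdot+\alpha)\,\tilde A_{\lambda,E}\,W_j-R_{\mp\theta}\|_0\le C e^{-c|n_{j+1}|}.
\]
Applying the rotation-number continuity \eqref{rho0} to the almost-conjugated cocycle, and then the conjugation rule \eqref{rhoconju}, we obtain
\[
\bigl\|\,2\rho(\alpha,\tilde A_{\lambda,E})\mp 2\theta-(\deg W_j)\,\alpha\,\bigr\|\le C' e^{-c|n_{j+1}|}\pmod{\Z}.
\]
Inserting the resonance bound $\|2\theta-n_j\alpha\|\le e^{-\epsilon_0|n_j|}$ and setting $k_j:=\pm n_j+\deg W_j$, which satisfies $|k_j|\le(C+2)|n_j|$, the triangle inequality delivers
\[
\|2\rho-k_j\alpha\|\le e^{-\epsilon_0|n_j|}+C' e^{-c|n_{j+1}|}\le e^{-\frac{\epsilon_0}{C+2}|k_j|}
\]
for all sufficiently large $j$ (the smallness condition $\epsilon_0<\epsilon_1/(100 C_4)$ guarantees the second exponential is negligible compared to the first even after the loss factor $C+2$). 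Replacing each $k_j$ by the actual minimizer of $\|2\rho-l\alpha\|$ over $|l|\le|k_j|$ converts these approximate resonances into bona fide $\frac{\epsilon_0}{C+2}$-resonances of $\rho$.

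\smallskip

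\textit{Main obstacle.} The delicate point is verifying that the resulting sequence is infinite, which reduces to showing $|k_j|\to\infty$. Were $k_j=k^*$ for infinitely many $j$ along a subsequence, the bound above would force $2\rho=k^*\alpha\pmod{\Z}$; under the Diophantine condition, the lower bound $\|k\alpha\|\ge C_\xi e^{-\xi|k|}$ (valid for every $\xi>0$) then permits only finitely many $\frac{\epsilon_0}{C+2}$-resonances of $\rho$. Substituting $2\rho\in\alpha\Z+\Z$ back into the almost-reducibility identity yields $\|2\theta-p_j\alpha\|=O(e^{-c|n_{j+1}|})$ for integers $p_j$ of bounded size, hence in the limit $2\theta\in\alpha\Z+\Z$, which under DC rules out an infinite resonance sequence for $\theta$ and contradicts the hypothesis. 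Therefore $|k_j|\to\infty$, and Part (B) follows.
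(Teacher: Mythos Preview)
Part (A) is correct and essentially identical to the paper's argument.

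For Part (B), your approach via \thmref{ar}, \eqref{rho0} and \eqref{rhoconju} is correct up through the bound $\|2\rho-k_j\alpha\|\le e^{-\frac{\epsilon_0}{C+2}|k_j|}$, but the ``main obstacle'' paragraph contains a genuine error. Suppose $k_j=k^*$ along a subsequence and substitute $2\rho=k^*\alpha$ into the relation $\|2\rho\mp2\theta-(\deg W_j)\alpha\|\le C'e^{-c|n_{j+1}|}$; the integer you recover is $p_j=\pm(k^*-\deg W_j)$. But on that very subsequence you have $\deg W_j=k^*\mp n_j$ (this is how $k_j=k^*$ was arranged), so in fact $p_j=n_j$, which is certainly \emph{not} of bounded size. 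Your deduction of $2\theta\in\alpha\Z+\Z$ from ``$p_j$ bounded'' therefore fails, and with it the contradiction that was supposed to show $|k_j|\to\infty$.

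The paper avoids this detour entirely. It uses the relation $2\rho+2\theta=m\alpha\pmod\Z$ with a \emph{single} integer $|m|\le Cn$ (this exactness is justifiable because the Diophantine condition, applied to the estimates from \thmref{ar} at successive $j$, forces the a priori $j$-dependent integers to stabilize). It then verifies directly that $m-n_j$ is itself the minimizer of $\|2\rho-l\alpha\|$ over $|l|\le|m-n_j|$: for $l\ne m-n_j$ in that range one has $\|(l-(m-n_j))\alpha\|\ge C_\xi e^{-2\xi|m-n_j|}>2e^{-\epsilon_0 n}\ge 2\|2\rho-(m-n_j)\alpha\|$ once $\xi<\frac{\epsilon_0}{2C+2}$, hence $\|2\rho-l\alpha\|>\|2\rho-(m-n_j)\alpha\|$. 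With $m$ fixed, the integers $m-n_j$ are automatically distinct and $|m-n_j|\to\infty$, so $\rho$ has infinitely many $\frac{\epsilon_0}{C+2}$-resonances without any separate argument; your minimizer-replacement step and the subsequent contradiction are then unnecessary.
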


\begin{proof}

(A): When $\theta$ is $\epsilon_0$-non-resonant for some $\frac{\epsilon_1}{100 C_4}>\epsilon_0>0$, Theorem $\ref{al}$ implies $H_{\hat{\lambda},\alpha, \theta}$ has exponentially decaying eigenfunction. Then applying Theorem $\ref{reducible}$ we get $2\theta\in\Z\alpha+\Z$ if and only if $2\rho(\alpha, \tilde{A}_{\lambda, E})\in \Z\alpha+\Z$.

(B): Assume $\theta$ is $\epsilon_0$-resonant for some $\frac{\epsilon_1}{100 C_4}>\epsilon_0>0$.
Fix any $\xi<\frac{\epsilon_0}{2C+2}$, then there exists $C_{\xi}>0$ such that for any $k\neq 0$ we have $\|k\alpha\|\geq C_{\xi}e^{-\xi |k|}$. Now take an $\epsilon_0$-resonance $n_j$ of $\theta$ such that $n=|n_j|>\max{(\frac{-\ln{C_{\xi}/2}}{\epsilon_0-(2C+2)\xi}, N_0)}$. 
Then there exists $|m|\leq Cn$ such that $2\rho(\alpha, \tilde{A}_{\lambda, E})-m\alpha=-2\theta$. Then
$$\|2\rho(\alpha,\tilde{A}_{\lambda, E})-(m-n_j)\alpha\|=\|2\theta-n_j\alpha\|<e^{-\epsilon_0 n}\leq e^{-\frac{\epsilon_0}{C+2}|m-n_j|}.$$
Take any $|l|\leq |m-n_j|$, $l\neq m-n_j$. 
Then
$$\|(l-(m-n_j))\alpha\|\geq C_{\xi} e^{-2\xi |m-n_j|}> 2e^{-\epsilon_0 n}>2\|2\rho(\alpha,\tilde{A}_E)-(m-l_0)\alpha\|.$$
Thus $\|2\rho(\alpha,\tilde{A}_E)-l\alpha\|>\|2\rho(\alpha,\tilde{A}_E)-(m-n_j)\alpha\|$ for any $|l|\leq |m-n_j|$,  $l\neq m-n_j$.
This by definition means $\rho(\alpha,\tilde{A}_{\lambda, E})$ is $\frac{\epsilon_0}{C+2}$-resonant.
\end{proof} $\hfill{} \Box$

Now based on Theorem $\ref{rhotheta}$, we can complete the proof of the dry version of Ten Martini Problem for extended Harper's model in regions I and II.

{\bf Proof of Theorem \ref{dry}}

It is enough to consider $\lambda$ in region II. Let $E\in \Sigma_{\lambda}$ be such that $N_{\lambda}(E)\in \Z\alpha+\Z$. 
We are going to show $E$ belongs to the boundary of a component of $\R\setminus \Sigma_{\lambda}$.
Now by $(\ref{IDSROT})$ we have $2\rho(\alpha,\tilde{A}_{\lambda, E})\in \alpha\Z+\Z$, thus by Theorem $\ref{rhotheta}$, $2\theta(E) \in \alpha\Z+\Z$. By Theorem $\ref{reducible}$, this means there exist $M(x)\in C^{\omega}_h (\T,PSL(2,\R))$ such that 
$M^{-1}(x+\alpha)\tilde{A}_{\lambda, E}(x)M(x)=
\left(\begin{matrix}
\pm 1\ \ &a\\
0\ \ &\pm 1
\end{matrix}
\right).$
Without loss of generality, we assume $M^{-1}(x+\alpha)\tilde{A}_{\lambda, E}(x)M(x)=
\left(\begin{matrix}
1\ \ &a\\
0\ \ & 1
\end{matrix}
\right).$
Let $\tilde{M}(x)=\frac{M(x)}{\sqrt{|c|(x-\alpha)}}$, then
\begin{align*}
\tilde{M}^{-1}(x+\alpha)
\left(
\begin{matrix}
\frac{E-v(x)}{|c|(x)}\ \ &-\frac{|c|(x-\alpha)}{|c|(x)}\\
1\ \ &0
\end{matrix}
\right)
\tilde{M}(x)=
\left(\begin{matrix}
1\ \ &a\\
0\ \ & 1
\end{matrix}
\right).
\end{align*}
Now let $\tilde{M}(x)=
\left(
\begin{matrix}
M_{11}(x)\ \ &M_{12}(x)\\
M_{21}(x)\ \ &M_{22}(x)
\end{matrix}
\right).$
Then $M_{21}(x)=M_{11}(x-\alpha)$ and $M_{22}(x)=M_{12}(x-\alpha)-aM_{11}(x-\alpha)$ amd
\begin{align*}
&\tilde{M}^{-1}(x+\alpha)
\left(
\begin{matrix}
\frac{E+\epsilon-v(x)}{|c|(x)}\ \ &-\frac{|c|(x-\alpha)}{|c|(x)}\\
1\ \ &0
\end{matrix}
\right)
\tilde{M}(x)\\
=&
\left(
\begin{matrix}
1\ \ &a\\
0\ \ &1
\end{matrix}
\right)
+
\epsilon
\left(
\begin{matrix}
M_{11}(x)M_{12}(x)-aM_{11}^2(x)\ \ &M_{12}^2(x)-aM_{11}(x)M_{12}(x)\\
-M^2_{11}(x)\ \ &-M_{11}(x)M_{12}(x)
\end{matrix}
\right).\\
\triangleq &M_0+ \epsilon M_1(x).
\end{align*}
Now we look for $Z_{\epsilon}(x)$ of the form $e^{\epsilon Y(x)}$ such that
\begin{align*}
Z^{-1}_{\epsilon}(x+\alpha) (M_0+\epsilon M_1(x)) Z_{\epsilon}(x)=M_0+\epsilon [M_1]+O(\epsilon^2).
\end{align*}
We then just need to solve the equation:
\begin{align*}
(I-\epsilon Y(x+\alpha)+O(\epsilon^2))(M_0+\epsilon M_1(x)) (I+\epsilon Y(x)+O(\epsilon^2))=M_0+\epsilon [M_1]+O(\epsilon^2).
\end{align*}
It is sufficient to solve the coholomogical equation:
\begin{align*}
Y(x+\alpha)M_0-M_0 Y(x)=M_1(x)-[M_1],
\end{align*}
which is guaranteed by the Diophantine condition on $\alpha$.
Thus 
\begin{align*}
&(M(x+\alpha)Z_{\epsilon}(x+\alpha))^{-1}
\tilde{A}_{\lambda, E}(x)
(M(x)Z_{\epsilon}(x))
\\
=&
\left(
\begin{matrix}
1+\epsilon [M_{11}M_{12}]-a\epsilon [M_{11}^2]\ \ &a+\epsilon [M_{12}^2]-a\epsilon [M_{11}M_{12}]\\
-\epsilon [M_{11}^2] \ \ &1-\epsilon [M_{11}M_{12}]
\end{matrix}
\right)+O(\epsilon^2)\\
\triangleq &M_{\epsilon}+O(\epsilon^2).
\end{align*}
Notice that $\tilde{A}_{\lambda,E}$ is uniformly hyperbolic iff $\mathrm{Trace}(M_{\epsilon})>2$ which is fulfilled when $-a\epsilon [M_{11}^2]>0$. Thus for $\epsilon$ small, satisfying $-a\epsilon [M_{11}^2]>0$, $E+\epsilon\notin \Sigma_{\lambda}$, which means this spectral gap is open.
$\hfill{} \Box$

\section{Almost localization in region I}
In this section we will prove Lemma $\ref{alexplicit}$. 
For fixed $\lambda$ in region II and $E$, let $D_{\hat{\lambda}, E} (\theta)=c_{\hat{\lambda}}(\theta)A_{\hat{\lambda},E}(\theta)$, where
$c_{\hat{\lambda}}(\theta)=\frac{\lambda_3}{\lambda_2}e^{-2\pi i (\theta+\frac{\alpha}{2})}+\frac{1}{\lambda_2}+\frac{\lambda_1}{\lambda_2}e^{2\pi i (\theta+\frac{\alpha}{2})}$. 
Regarding the Lyapunov exponent, we recall the following result in \cite{Jm},
\begin{align*}
L(\alpha, A_{\hat{\lambda}, E})=L(\alpha, D_{\hat{\lambda},E})-\int_{\T}\ln{|c_{\hat{\lambda}}(\theta)|}\mathrm{d}\theta \triangleq \tilde{L}-\int \ln{|c_{\hat{\lambda}}|}>0,
\end{align*}
where $\tilde{L}=\ln{\frac{\lambda_2+\sqrt{\lambda_2^2-4\lambda_1\lambda_3}}{2\lambda_2}}$ and $\int \ln{|c_{\hat{\lambda}}|}=\ln{\frac{\max{(\lambda_1+\lambda_3,1)}+\sqrt{\max{(\lambda_1+\lambda_3,1)}^2-4\lambda_1\lambda_3}}{2\lambda_2}}$.

{\bf Proof of of Lemma $\ref{alexplicit}$}
\

Suppose $u$ is a solution satisfying the condition of Lemma $\ref{alexplicit}$. 
For an interval $I=[x_1, x_2]$, let $\Gamma_I$ be the coupling operator between $I$ and $\Z\setminus I$:
\begin{align*}
\Gamma_I(i,j)=
\left\lbrace
\begin{matrix}
&\tilde{c}(\theta+(x_1-1)\alpha),\ \ (i,j)=(x_1, x_1-1)\\
&c(\theta+(x_1-1)\alpha),\ \ (i,j)=(x_1-1, x_1)\\
&\tilde{c}(\theta+x_2\alpha),\ \ \ \ \ \ \ \ \ (i,j)=(x_2+1, x_2)\\
&c(\theta+x_2\alpha),\ \ \ \ \ \ \ \ \ (i,j)=(x_2, x_2+1)\\
&0\ \ \ \ \ \ \ \ \ \ \ \ \ \ \ \ \ \ \ \ \ \ \ \ \ \ \ \ \ \ \ \ \ \ \mathrm{otherwise}.
\end{matrix}
\right.
\end{align*} 
Let $H_I=R_IH_{\hat{\lambda}, \theta} R_I^*$ be the restricted operator of $H_{\hat{\lambda} ,\theta}$ to $I$. Then for $x\in I$, we have $(H_I+\Gamma_I-E)u(x)=0$. Thus $u(x)=G_I\Gamma_Iu(x)$, where $G_I=(E-H_I)^{-1}$. By matrix multiplication:

\begin{align*}
u(x)&=\sum_{y\in I,(y,z)\in \Gamma_I}G_I(x,y)\Gamma_I (y,z)u(z)\\
&=\tilde{c}(\theta+(x_1-1)\alpha) G_I(x,x_1)u (x_1-1)+ c(\theta+x_2\alpha) G_I(x,x_2)u(x_2+1).
\end{align*}

Let us denote $P_k(\theta)=\det{(E-H_{[0,k-1]}(\theta))}$. Then the $k-$step matrix $D_{\hat{\lambda}, E, k}(\theta)$ satisfies:
\begin{align*}
D_{\hat{\lambda}, E, k}(\theta)=
\left(
\begin{matrix}
P_k(\theta)\ \ &-\tilde{c}(\theta-\alpha)P_{k-1}(\theta+\alpha)\\
c(\theta+(k-1)\alpha)P_{k-1}(\theta)\ \ &-\tilde{c}(\theta-\alpha)c(\theta+(k-1)\alpha)P_{k-2}(\theta+\alpha)
\end{matrix}
\right).
\end{align*}
This relation between $P_k(\theta)$ and $D_{\hat{\lambda}, E, k}(\theta)$ gives a general upper bound of $P_k(\theta)$ in terms of $\tilde{L}$. Indeed by Lemma $\ref{uniformupp}$, 
for any $\epsilon>0$ there exists $C(\epsilon)>0$ so that 
\begin{align*}
|P_n(\theta)|\leq C(\epsilon) e^{(\tilde{L}+\epsilon)n}\ \ \mathrm{for}\ \mathrm{any}\ n\in \N.
\end{align*}

By Cramer's rule:
\begin{align*}
&|G_I(x_1,y)|=\prod_{j=x_1}^{y-1}|c(\theta+j\alpha)||\frac{\det{(E-H_{[y+1,x_2]}(\theta))}}{\det{(E-H_I(\theta))}}|=\prod_{j=x_1}^{y-1}|c(\theta+j\alpha)||\frac{P_{x_2-y}(\theta+(y+1)\alpha)}{P_k(\theta+x_1\alpha)}|,\\
&|G_I(y,x_2)|=\prod_{j=y+1}^{x_2}|c(\theta+j\alpha)||\frac{\det{(E-H_{[x_1, y-1]}(\theta))}}{\det{(E-H_I(\theta))}}|=\prod_{j=y+1}^{x_2}|c(\theta+j\alpha)||\frac{P_{y-x_1}(\theta+x_1\alpha)}{P_k(\theta+x_1\alpha)}|.
\end{align*}

Notice that $P_k(\theta)$ is an even function about $\theta+\frac{k-1}{2}\alpha$, it can be written as a polynomial of degree $k$ in $\cos{2\pi(\theta+\frac{k-1}{2}\alpha)}$. Let $P_k(\theta)=Q_k(\cos{2\pi (\theta+\frac{k-1}{2}\alpha)})$. Let $M_{k,r}=\{\theta\in \T,\ |Q_k(\cos{2\pi \theta})|\leq e^{(k+1)r}\}$.

\begin{definition}
Fix $m>0$. A point $y\in\Z$ is called $(k,m)-$regular if there exists an interval $[x_1, x_2]$ containing $y$, where $x_2=x_1+k-1$ such that
$$|G_I(y, x_i)|\leq e^{-m|y-x_i|} \ \mathrm{and}\ \mathrm{dist}(y, x_i)\geq  \frac{1}{3} k\ \mathrm{for}\ i=1,2,$$
otherwise $y$ is called $(k,m)-$singular.
\end{definition}

\begin{lemma}
Suppose $y\in\Z$ is $(k,\tilde{L}-\int \ln{|c_{\hat{\lambda}}|}-\rho)-$singular. Then for any $\epsilon>0$ and any $x\in\Z$ satisfying $y-\frac{2}{3}k\leq x\leq y-\frac{1}{3} k$, we have $\theta+(x+\frac{1}{2}(k-1))\alpha$ belongs to $M_{k, \tilde{L}-\frac{1}{3} \rho+\epsilon}$ for $k>k(\lambda,\epsilon,\rho)$.
\end{lemma}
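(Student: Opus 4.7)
Set $x_1=x$, $x_2=x+k-1$, and $I=[x_1,x_2]$. From $y-\tfrac{2}{3}k\le x\le y-\tfrac{1}{3}k$ one checks directly that $\mathrm{dist}(y,x_i)\ge \tfrac{k}{3}$ (with negligible off-by-one corrections for large $k$), so $I$ is a legitimate competitor in the definition of $(k,m)$-regularity. Since $y$ is $(k,\tilde{L}-\int\ln|c_{\hat{\lambda}}|-\rho)$-singular, at least one of the two Green's function decay estimates must fail on this particular interval, and without loss of generality
$$|G_I(y,x_1)| > e^{-(\tilde{L}-\int\ln|c_{\hat{\lambda}}|-\rho)(y-x_1)};$$
the argument with $x_2$ in place of $x_1$ is identical by symmetry and yields the same conclusion.

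Next I would solve for $|P_k(\theta+x_1\alpha)|$ using the Cramer's rule identity displayed immediately before the lemma, obtaining
$$|P_k(\theta+x_1\alpha)| < \Bigl(\prod_{j=x_1}^{y-1}|c(\theta+j\alpha)|\Bigr)\,|P_{x_2-y}(\theta+(y+1)\alpha)|\,e^{(\tilde{L}-\int\ln|c_{\hat{\lambda}}|-\rho)(y-x_1)}.$$
The polynomial factor in the middle is at most $C_\epsilon\, e^{(\tilde{L}+\epsilon/4)(x_2-y)}$ by Lemma~\ref{uniformupp} applied to the cocycle $D_{\hat{\lambda},E}$, whose Lyapunov exponent is $\tilde{L}$. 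For the product of $|c|$ factors I would invoke a quantitative unique ergodicity estimate for the rotation by $\alpha\in\mathrm{DC}$ applied to $\ln|c_{\hat{\lambda}}|\in L^1(\T)$, concluding
$$\prod_{j=x_1}^{y-1}|c(\theta+j\alpha)|\le e^{(y-x_1)(\int\ln|c_{\hat{\lambda}}|+\epsilon/4)}$$
uniformly in $\theta$, once $k\ge k(\lambda,\epsilon)$.

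Substituting these two bounds, the $\int\ln|c_{\hat{\lambda}}|$ contributions cancel and one is left with
$$|P_k(\theta+x_1\alpha)|\le C_\epsilon\,e^{\tilde{L}(x_2-x_1)-\rho(y-x_1)+\epsilon k/2}\le C_\epsilon\,e^{\tilde{L}(k-1)-\rho k/3+\epsilon k/2},$$
using $y-x_1\ge k/3$ in the last step. Rewriting through $P_k(\theta)=Q_k(\cos 2\pi(\theta+\tfrac{k-1}{2}\alpha))$ gives
$$\bigl|Q_k\bigl(\cos 2\pi(\theta+(x+\tfrac{k-1}{2})\alpha)\bigr)\bigr|\le e^{(k+1)(\tilde{L}-\rho/3+\epsilon)}$$
for $k\ge k(\lambda,\epsilon,\rho)$, which is exactly the membership $\theta+(x+\tfrac{1}{2}(k-1))\alpha\in M_{k,\tilde{L}-\rho/3+\epsilon}$.

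The main technical obstacle is the uniform one-sided Birkhoff estimate for $\ln|c_{\hat{\lambda}}|$: because $c_{\hat{\lambda}}$ can vanish at isolated points of $\T$, the function $\ln|c_{\hat{\lambda}}|$ is only $L^1$ rather than continuous, so one cannot invoke classical uniform ergodicity directly. The standard remedy is to truncate $\ln|c_{\hat{\lambda}}|$ near its zeros and exploit the Diophantine condition on $\alpha$ to limit how often the trajectory $\{\theta+j\alpha\}_{0\le j<y-x_1}$ can approach a zero of $c_{\hat{\lambda}}$; this yields the required bound uniformly in $\theta$, at the cost of an arbitrarily small loss $\epsilon/4$ once $k$ exceeds a threshold $k(\lambda,\epsilon)$ depending only on $\lambda$ and $\epsilon$.
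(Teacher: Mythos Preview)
Your argument is correct and is essentially the contrapositive of the paper's proof: the paper assumes $\theta+(x+\tfrac{k-1}{2})\alpha\notin M_{k,\tilde L-\rho/3+\epsilon}$ and deduces the two Green's function bounds (hence regularity), whereas you assume singularity and solve the same inequality for $|P_k(\theta+x_1\alpha)|$. The ingredients---Cramer's rule, the uniform bound $|P_m|\le C_\epsilon e^{(\tilde L+\epsilon)m}$ from Lemma~\ref{uniformupp}, and the Birkhoff upper bound on $\prod|c_{\hat\lambda}|$---are identical, and the paper leaves the Birkhoff step implicit where you spell it out.

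One remark: the ``main technical obstacle'' you flag is not actually present. Only the \emph{upper} Birkhoff bound $\tfrac{1}{n}\sum_{j}\ln|c_{\hat\lambda}(\theta+j\alpha)|\le\int\ln|c_{\hat\lambda}|+\epsilon/4$ is needed, and since $\ln|c_{\hat\lambda}|$ is upper semicontinuous (bounded above, equal to $-\infty$ at any zeros of $c_{\hat\lambda}$), unique ergodicity of the irrational rotation yields this uniformly in $\theta$ with no truncation and no Diophantine input. The argument you sketch would be required for a uniform \emph{lower} Birkhoff bound, which plays no role here.
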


\begin{proof}
Suppose there exists $\epsilon>0$ and $x_1$: $y-(1-\delta)k\leq x_1\leq y-\delta k$, 
such that $\theta+(x_1+\frac{1}{2}(k-1))\alpha$ does not belong to $M_{k,\tilde{L}-\frac{1}{3} \rho+\epsilon}$, that is  $|P_k(\theta+x_1\alpha)|>e^{(k+1)(\tilde{L}-\rho\delta+\epsilon)}$,
\begin{align*}
|G_I(x_1, y)| & \leq \prod_{j=x_1}^{y-1}|c_{\hat{\lambda}}(\theta+j\alpha)|e^{(k-|x_1-y|)(\tilde{L}+\epsilon)} e^{-(k+1)(\tilde{L}-\frac{1}{3}\rho+\epsilon)}\\
                        &<e^{-(\tilde{L}-\int \ln{|c_{\hat{\lambda}}|}-\rho)|y-x_1|}\ \ \mathrm{for}\ k>k(\lambda,\epsilon,\rho).
\end{align*}
Similarly
$$|G_I(x_2,y)|\leq e^{-(\tilde{L}-\int \ln{|c_{\hat{\lambda}}|}-\rho)|y-x_2|}.$$

\begin{center}
\begin{tikzpicture}
    \draw[->] (-7,0) -- (7,0) node [right] {$x$} ;

   \draw [-](4.2,0) node [below] {$y$};
   \draw [-](2.0,0) node [below] {$y-(\frac{1}{2}-\delta)k$};
   \draw [-](6.4,0) node [below] {$y+(\frac{1}{2}-\delta)k$};

   \draw [-](-6.0,0) node [below] {$y-(1-\delta)k$};
   \draw [-](-1.6,0) node [below] {$y-\delta k$};

   \draw[loosely dashed,|<->|] (2.0,0.2) -- (6.4,0.2)node[pos=0.5,above]{$x+\frac{1}{2}(k-1)\alpha$};
   \draw[loosely dashed,|<->|] (-6,0.2) -- (-1.6,0.2)node[pos=0.5,above]{$x$};

\end{tikzpicture}
\end{center}
\end{proof} $\hfill{} \Box$

\begin{definition}
We say that the set $\{\theta_1, ..., \theta_{k+1}\}$ is $\gamma-$uniform if
$$\max_{x\in [-1, 1]}\max_{i=1,...,k+1}\prod_{j=1, j\neq i}^{k+1}\frac{|x-\cos{2\pi\theta_j}|}{|\cos{2\pi\theta_i}-\cos{2\pi\theta_j}|}<e^{k\gamma}$$
\end{definition}

\begin{lemma}\label{gamma1}
Let $\gamma_1<\gamma$. If $\theta_1, ..., \theta_{k+1}\in M_{k, \tilde{L}-\gamma}$, then $\{\theta_1, ..., \theta_{k+1}\}$ is not $\gamma_1-$uniform for $k>k(\gamma, \gamma_1)$.
\end{lemma}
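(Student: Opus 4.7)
\textbf{Proof proposal for Lemma \ref{gamma1}.} The plan is to use Lagrange interpolation to compare an upper bound coming from the $\gamma_1$-uniformity and the hypothesis $\theta_i \in M_{k,\tilde{L}-\gamma}$ against a lower bound on $\sup_\theta |P_k(\theta)|$ coming from the Lyapunov exponent $\tilde{L}$ of the cocycle $D_{\hat\lambda,E}$. Since $P_k(\theta)=Q_k(\cos 2\pi(\theta+\tfrac{k-1}{2}\alpha))$ is a polynomial of degree at most $k$ in $\cos 2\pi(\theta+\tfrac{k-1}{2}\alpha)$, the polynomial $Q_k(x)$ of degree $\le k$ is uniquely determined by its values at the $k+1$ distinct nodes $x_i=\cos 2\pi\theta_i$ via
\[
Q_k(x)=\sum_{i=1}^{k+1}Q_k(\cos 2\pi\theta_i)\prod_{j\ne i}\frac{x-\cos 2\pi\theta_j}{\cos 2\pi\theta_i-\cos 2\pi\theta_j}.
\]
(If some $x_i$ coincide we may perturb slightly or discard; the general case is handled by a standard limiting argument.)

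Taking $\sup_{x\in[-1,1]}$, using the hypothesis $|Q_k(\cos 2\pi\theta_i)|\le e^{(k+1)(\tilde{L}-\gamma)}$ for each $i$, and the $\gamma_1$-uniformity assumption on the product terms, I would obtain
\[
\sup_{\theta\in\T}|P_k(\theta)|=\sup_{x\in[-1,1]}|Q_k(x)|\le(k+1)\,e^{(k+1)(\tilde{L}-\gamma)}\,e^{k\gamma_1}.
\]
For the matching lower bound, I would invoke the fact that $\tilde{L}=L(\alpha,D_{\hat\lambda,E})$ together with the shape of the transfer matrix
\[
D_{\hat\lambda,E,k}(\theta)=\begin{pmatrix}P_k(\theta)&-\tilde c(\theta-\alpha)P_{k-1}(\theta+\alpha)\\ c(\theta+(k-1)\alpha)P_{k-1}(\theta)&-\tilde c(\theta-\alpha)c(\theta+(k-1)\alpha)P_{k-2}(\theta+\alpha)\end{pmatrix}.
\]
Since $\|D_{\hat\lambda,E,k}(\theta)\|$ must attain roughly $e^{k\tilde{L}}$ at some $\theta$ (for instance using that $\frac{1}{k}\int\ln\|D_{\hat\lambda,E,k}\|\to\tilde{L}$), at least one entry is of size $e^{k(\tilde{L}-o(1))}$ at some point, and hence one of $P_k,P_{k-1},P_{k-2}$ achieves at least $e^{k(\tilde{L}-o(1))}$ at a point near that $\theta$; absorbing the difference between $P_k$ and $P_{k\pm 1},P_{k\pm 2}$ into an $o(k)$ error (they are all controlled up to the multiplicative constants $\|c\|,\|\tilde c\|$) yields
\[
\sup_{\theta\in\T}|P_k(\theta)|\ge e^{k\tilde{L}(1-o(1))}\quad\text{as }k\to\infty.
\]

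Combining the two bounds gives $k\tilde{L}(1-o(1))\le\ln(k+1)+(k+1)(\tilde{L}-\gamma)+k\gamma_1$, which after dividing by $k$ and letting $k\to\infty$ forces $\gamma\le\gamma_1$, contradicting $\gamma_1<\gamma$. Thus for $k>k(\gamma,\gamma_1)$ the set cannot be $\gamma_1$-uniform. The principal technical point—and the place where one must be careful—is the lower bound on $\sup_\theta|P_k(\theta)|$: one needs a clean $L^\infty$ statement (not just an $L^1$ Lyapunov statement) with an error strictly smaller than $\gamma-\gamma_1$, which is exactly the quantitative content of Lemma~\ref{uniform} invoked in the almost-localization argument and can be derived from Lemma~\ref{uniformupp} together with the transfer-matrix-to-$P_k$ conversion above.
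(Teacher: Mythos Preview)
Your Lagrange-interpolation upper bound is correct and is exactly what the paper does: from $\gamma_1$-uniformity and $\theta_i\in M_{k,\tilde{L}-\gamma}$ one obtains $\sup_{[-1,1]}|Q_k|\le(k+1)e^{(k+1)(\tilde{L}-\gamma)}e^{k\gamma_1}<e^{k\tilde{L}}$ for $k$ large, hence $|P_k(\theta)|<e^{k\tilde{L}}$ for all $\theta$.

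The gap is in your lower bound. From $\tilde{L}=\inf_k\frac{1}{k}\int\ln\|D_{\hat{\lambda},E,k}\|$ you get $\sup_\theta\|D_{\hat{\lambda},E,k}(\theta)\|\ge e^{k\tilde{L}}$, so some matrix entry is large at some $\theta_0$. But the four entries involve $P_k$, $P_{k-1}$, and $P_{k-2}$; if the large one happens to be a $P_{k-1}$ or $P_{k-2}$ entry you cannot conclude that $\sup|P_k|$ is large---the three-term recursion does not propagate pointwise lower bounds upward (the coefficient $E-2\cos 2\pi(\theta+(k-1)\alpha)$ may be small and there can be cancellation with the $P_{k-2}$ term). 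The sentence ``absorbing the difference between $P_k$ and $P_{k\pm1},P_{k\pm2}$ into an $o(k)$ error'' is precisely the step that fails. Your appeals at the end do not help either: Lemma~\ref{uniformupp} furnishes only an \emph{upper} bound on the cocycle, and Lemma~\ref{uniform} concerns the $\gamma$-uniformity of a particular point configuration, not any lower bound on $P_k$. The paper sidesteps this entirely by applying Herman's subharmonic argument directly to $P_k$ (viewed as a Laurent polynomial in $e^{2\pi i\theta}$), yielding the exact inequality $\int_{\T}\ln|P_k(\theta)|\,d\theta\ge k\tilde{L}$, which immediately contradicts the pointwise bound $|P_k|<e^{k\tilde{L}}$. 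So what you actually need is an $L^1$ lower bound on $\ln|P_k|$ coming from the extremal Fourier coefficient of $P_k$, not an $L^\infty$ lower bound extracted from the full transfer matrix.
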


\begin{proof}
Otherwise, using Lagrange interpolation form we can get
$|Q_k(x)|<e^{k\tilde{L}}$ for all $x\in [-1,1]$. 
This implies $|P_k(x)|<e^{k\tilde{L}}$ for all $x$. 
But by Herman's subharmonic function argument, $\int_{\R/\Z}\ln|P_k(x)|\mathrm{d}x \geq k\tilde{L}$. This is impossible.
\end{proof}  $\hfill{} \Box$
\

\

Now take $\xi$ and $\epsilon_0$ such that $0<1000\xi<\epsilon_0$. Then for $|n_{j+1}|>N(\xi)$ we have
\begin{align*}
2e^{-4\xi |n_{j+1}|} \leq  C_{\xi} e^{-2\xi|n_{j+1}|}\leq \|(n_{j+1}-n_j)\alpha\|= \|n_{j+1}\alpha-2\theta+2\theta-n_j\alpha\|\leq 2\|2\theta-n_j\alpha\|\leq 2e^{-\epsilon_0 |n_j|},
\end{align*}
which yields that 
\begin{equation}\label{250}
|n_{j+1}|>\frac{\epsilon_0}{4\xi} |n_j|>250|n_j|.
\end{equation}

Without loss of generality, assume $3(|n_j|+1)<y<\frac{|n_{j+1}|}{3}$ and $y>N(\xi)$. Select $n$ such that $q_n\leq \frac{y}{8}< q_{n+1}$ and let $s$ be the largest positive integer satisfying $sq_n\leq \frac{y}{8}$. Set $I_1, I_2\subset \Z$ as follows
\begin{align*}
I_1=[1-2sq_n, 0]\ and\ I_2=[y-2sq_n+1, y+2sq_n],\ &\mathrm{if}\ n_j<0      \\
I_1=[0, 2sq_n-1]\ and\ I_2=[y-2sq_n+1, y+2sq_n],\  &\mathrm{if}\ n_j\geq 0
\end{align*}

\begin{lemma}\label{uniform}
Let $\theta_j=\theta+j\alpha$, then set $\{\theta_{j}\}_{j\in I_1\cup I_2}$ is $C_4\epsilon_0+C_4\xi-$uniform for some absolute constant $C_4$ and $y>y(\alpha, \epsilon_0, \xi)$.
\end{lemma}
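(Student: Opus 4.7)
I follow the block-decomposition strategy of Avila--Jitomirskaya, adapted to the extended Harper setting. The idea is to bound the logarithm of the ratio in the definition of $\gamma$-uniformity by decomposing $I_1 \cup I_2$, which has cardinality $6sq_n$, into $6s$ disjoint blocks of $q_n$ consecutive integers ($2s$ blocks in $I_1$ and $4s$ blocks in $I_2$), and applying Lemma \ref{smallest} to each block.

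First I apply the identity $|\cos 2\pi a - \cos 2\pi b| = 2|\sin\pi(a+b)|\,|\sin\pi(a-b)|$ to expand both numerator and denominator, so that the $2^{|I_1\cup I_2|-1}$ factors cancel and the logarithm of the uniformity ratio becomes the sum of two pieces,
$$\Sigma_1 = \sum_{j\neq i}\Bigl[\ln|\sin\pi(\xi - \theta_j)| - \ln|\sin\pi(i-j)\alpha|\Bigr], \qquad \Sigma_2 = \sum_{j\neq i}\Bigl[\ln|\sin\pi(\xi + \theta_j)| - \ln|\sin\pi(2\theta + (i+j)\alpha)|\Bigr],$$
where $x = \cos 2\pi\xi$.

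For $\Sigma_1$, reindexing $k = i - j$ brings both sums to the form $\sum\ln|\sin\pi(y_\ast + k\alpha)|$ with $k$ ranging over $i - (I_1 \cup I_2)$, which again decomposes into $6s$ blocks of $q_n$ consecutive integers (with $k=0$ removed). Applying the upper bound of Lemma \ref{smallest} to the first sum and the lower bound (excluding the smallest term) to the second, and using continued fraction theory to bound the per-block smallest $|\sin\pi k\alpha|$ from below by a constant multiple of $\|q_n\alpha\| \geq 1/(2q_{n+1})$, I obtain $\Sigma_1 \leq C s \ln q_{n+1}$.

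For $\Sigma_2$, the key observation is that the sign-sensitive definition of $I_1$ ensures $k = i + j \neq -n_j$ for every admissible $(i,j)$: when $n_j \geq 0$ both $I_1 \subset [0,\infty)$ and $I_2 \subset (0,\infty)$, so $k \geq 0 \geq -n_j$ with equality only when $i = j = 0$ (which is excluded); the case $n_j < 0$ is symmetric. Moreover $|k| \leq 2y + 4sq_n \leq 5y/2 < |n_{j+1}|$ since $y < |n_{j+1}|/3$. Combining the non-resonance property of $\theta$ between $n_j$ and $n_{j+1}$ with the Diophantine identity
$$\|2\theta + k\alpha\| = \|(k + n_j)\alpha - (n_j\alpha - 2\theta)\| \geq \tfrac{1}{2}\|(k + n_j)\alpha\| \geq \tfrac{C_\xi}{2}\, e^{-\xi|k + n_j|},$$
valid whenever $\|(k + n_j)\alpha\| \geq 2\|2\theta - n_j\alpha\|$, yields per-block lower bounds on the smallest factor in $\prod|\sin\pi(2\theta + (i+j)\alpha)|$. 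Applying Lemma \ref{smallest} block-by-block and summing the per-block contributions gives $\Sigma_2 \leq Csq_n(\epsilon_0 + \xi) + Cs\ln q_{n+1}$.

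Combining both estimates, the total log-ratio is $\leq Csq_n(\epsilon_0 + \xi) + Cs\ln q_{n+1}$. Using the Diophantine condition $\beta(\alpha) = 0$ to ensure $\ln q_{n+1} \leq \xi q_n$ for $n$ sufficiently large (equivalently $y > y(\alpha,\epsilon_0,\xi)$), the second term is absorbed, and dividing by $k = 6sq_n - 1$ yields uniformity constant $C_4(\epsilon_0 + \xi)$ for some absolute $C_4$.

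The main obstacle is the per-block control of the factor $|\sin\pi(2\theta + (i+j)\alpha)|$ in blocks that happen to contain the image $-n_m$ of an earlier $\epsilon_0$-resonance $n_m$ with $m < j$. The sign-sensitive choice of $I_1$ only rules out the most dangerous case $k = -n_j$; contributions from $k = -n_m$ for $m < j$ must be absorbed using the monotonicity $\|2\theta - n_m\alpha\| \geq \|2\theta - n_j\alpha\|$, the Diophantine lower bound on $\|2\theta - n_j\alpha\|$ (obtained from $\|(n_{j+1} - n_j)\alpha\| \leq 2\|2\theta - n_j\alpha\|$), and the fact that the number of earlier resonances whose images fall in the $k$-range is $O(\log y)$ by the geometric spacing \eqref{250}. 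The Jacobi-specific factor $c_{\hat\lambda}$ contributes only a constant shift in the Lyapunov exponent via Theorem \ref{LE} and is absorbed without affecting the combinatorial structure.
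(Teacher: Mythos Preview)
Your overall strategy---block decomposition into $6s$ pieces of length $q_n$, Lemma~\ref{smallest} on each block, separate treatment of the $\pm$ pieces---is exactly that of the paper. The gap is in how you control the sum of the $6s$ per-block excluded minima. For $\Sigma_1$, your claimed per-block lower bound $|\sin\pi k\alpha|\gtrsim\|q_n\alpha\|$ is only valid when $0<|k|<q_{n+1}$, but here $|i-j|$ can be of order $20sq_n$ while only $sq_n<q_{n+1}$ is guaranteed, so the bound fails for many blocks. For $\Sigma_2$, your Diophantine estimate $\|2\theta+k\alpha\|\geq\tfrac{C_\xi}{2}e^{-\xi|k+n_j|}$ gives a per-block floor of order $e^{-C\xi sq_n}$; summing this over all $6s$ blocks produces a contribution of order $\xi s^2 q_n$, which is too large by a factor of~$s$ and cannot be absorbed into $C_4(\epsilon_0+\xi)\cdot 6sq_n$.

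The missing ingredient, which the paper supplies, is a \emph{counting} step: since $sq_n<q_{n+1}$, any two indices in the same $sq_n$-subinterval differ by less than $q_{n+1}$, so at most one of the corresponding sine terms can fall below $e^{-2\xi q_n}$. Hence among your $6s$ minima, at most $6$ are genuinely small; the remaining $6s-6$ contribute only $O(\xi s q_n)$. For those $\leq 6$ bad terms in $\sum_{2,+}$ the paper does not use your pointwise Diophantine estimate but instead argues by contradiction: if the global minimum $\|2\theta+k_0\alpha\|$ were below $e^{-100\epsilon_0 sq_n}$ then $-k_0$ would be forced to be a resonance strictly earlier than $n_j$, contradicting the monotonicity $\|2\theta-n_m\alpha\|\geq\|2\theta-n_j\alpha\|$. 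Your ``main obstacle'' paragraph misidentifies the difficulty---the earlier-resonance indices $k=-n_m$ are already covered by your own Diophantine estimate since $k+n_j=n_j-n_m\neq 0$ with $|n_j-n_m|\leq 2|n_j|$---and the lower bound you propose for $\|2\theta-n_j\alpha\|$ via $\|(n_{j+1}-n_j)\alpha\|$ introduces dependence on $|n_{j+1}|$, which is not bounded in terms of $y$.
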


\begin{proof}
Without loss of generality, we assume $n_j>0$.
Take $x=\cos{2\pi a}$.
Now it suffices to estimate
\begin{align*}
\sum_{j\in I_1\cup I_2,\ j\neq i}\left( \ln{|\cos{2\pi a}-\cos{2\pi \theta_j}|}-\ln{|\cos{2\pi \theta_i}-\cos{2\pi \theta_j}|}\right) \triangleq \sum_1-\sum_2.
\end{align*}
Lemma $\ref{smallest}$ reduces this problem to estimating the minimal terms.

First we estimate $\sum_1$:
\begin{align*}
\sum_{1}
&=\sum_{j\in I_1\cup I_2, j\neq i} \ln|\cos{2\pi a}-\cos{2\pi\theta_{j}}|\\
&=\sum_{j\in I_1\cup I_2, j\neq i} \ln|\sin{\pi(a+\theta_{j})}|+\sum_{j\in I_1\cup I_2, j\neq i} \ln|\sin{\pi(a-\theta_{j})}|+(6sq_n-1)\ln 2\\
&\triangleq\sum_{1,+}+\sum_{1,-}+(6sq_n-1)\ln 2.\\
\end{align*}

We cut $\sum_{1,+}$ or $\sum_{1,-}$ into $6s$ sums and then apply Lemma $\ref{smallest}$, we get that for some absolute constant $C_1$:
\begin{align*}
\sum_{1}\leq -6sq_n\ln 2+C_1 s\ln q_n.
\end{align*}

Next, we estimate $\sum_2$.
\begin{align*}
\sum_2
&=\sum_{j\in I_1\cup I_2, j\neq i}\ln |\cos 2\pi\theta_j-\cos 2\pi\theta_i|\\
&=\sum_{j\in I_1\cup I_2, j\neq i} \ln|\sin{\pi(2\theta+(i+j)\alpha)}|+\sum_{j\in I_1\cup I_2, j\neq i} \ln|\sin{\pi(i-j)\alpha}|+(6sq_n-1)\ln 2\\
&\triangleq \sum_{2,+} + \sum_{2,-} +(6sq_n-1)\ln 2.
\end{align*}
We need to carefully estimate the minimal terms. For $\sum_{2,+}$, we use the property of resonant set; and for $\sum_{2,-}$, we use the Diophantine condition on $\alpha$.

For any $0<|j|< q_{n+1}$ , we have $\|j\alpha\|\geq \|q_n\alpha\|\geq  C_{\xi}e^{-\xi q_n}$. Therefore
$$\max(\ln|\sin{x}|, \ln|\sin(x+\pi j\alpha)|)\geq -2\xi q_n\ \ \mathrm{for}\ y>y(\alpha, \xi).$$
This means in any interval of length $sq_n$, there can be at most one term which is less than $-2\xi q_n$. Then there can be at most $6$ such terms in total. 

For the part $\sum_{2,-}$, since $\|(i-j)\alpha\|\geq C_{\xi}e^{-\xi |i-j|}\geq e^{-20\xi sq_n}$, these 6 smallest terms must be bounded by $-20\xi sq_n$ from below. Hence $\sum_{2,-}\geq -6sq_n\ln 2-C \xi sq_n-Cs\ln{q_n}$ for $y>y(\xi)$ and some absolute constant $C$.

For the part $\sum_{2,+}$,  notice $|i+j|\leq 2y+4sq_n<3y<|n_{j+1}|$ and $i+j>0>-n_j$. 
Suppose $\|2\theta+k_0\alpha\|=\min_{j\in I_1\cup I_2}\|2\theta+(i+j)\alpha\|\leq e^{-100\epsilon_0 sq_n}<e^{-\epsilon_0 |k_0|}$. Then for any $|k|\leq |k_0| \leq 40 sq_n$ (including $|n_j|$),
\begin{align*}
\|2\theta-k\alpha\|\geq \|(k+k_0)\alpha\|-\|2\theta+k_0\alpha\| >\|2\theta+k_0\alpha\|\ \ \mathrm{for}\ y>y(\alpha, \epsilon_0,\xi).
\end{align*}
This means $-k_0$ must be a $\epsilon_0-$resonance, therefore $|k_0|\leq |n_{j-1}|$. Then
$$\|2\theta-n_j\alpha\|\geq \|(n_j+k_0)\alpha\|-\|2\theta+k_0\alpha\|\geq C_{\xi}e^{-12\xi sq_n}-e^{-100 \epsilon_0 sq_n}>e^{-100 \epsilon_0 sq_n}\geq \|2\theta+k_0\alpha\|$$
leads to a contradiction.
Thus the smallest terms must be greater than $-100\epsilon_0 sq_n$. We can bound $\sum_{2,+}$ by $-6sq_n\ln 2-600\epsilon_0 sq_n-12\xi sq_n-Cs\ln{q_n}$ from below. Therefore $\sum_{2}\geq -6sq_n\ln 2-C\epsilon_0 sq_n-C \xi sq_n-Cs\ln{q_n}$. Thus the set $\{\theta_j\}_{j\in I_1\cup I_2}$ is $C_4\epsilon_0+C_4\xi-$uniform for  $y>y(\alpha, \epsilon_0, \xi)$ and some absolute constant $C_4$.
\end{proof} $\hfill{} \Box$

Now let $C_4$ be the absolute constant in Lemma $\ref{uniform}$. Choose $0<1000\xi<\epsilon_0<\frac{\epsilon_1}{100C_4}$. 
Combining Lemma $\ref{gamma1}$ and Lemma $\ref{uniform}$, we know that when $y>y(\alpha, \epsilon_0, \xi)$, $\{\theta_j\}_{j\in I_1\cup I_2}$ can not be inside the set $M_{6sq_n-1, \tilde{L}-2C_4 \epsilon_0}$ at the same time. 
Therefore $0$ and $y$ can not be $(6sq_n-1, \tilde{L}-\int \ln{|c_{\hat{\lambda}}|}-9C_4 \epsilon_0)$ at the same time. 
However $0$ is $(6sq_n-1, \tilde{L}-\int \ln{|c_{\hat{\lambda}}|}-9C_4 \epsilon_0)-$singular given $n$ large enough. Therefore 
$$\{\theta_j\}_{j\in I_1}\subset M_{6sq_n-1, \tilde{L}-2C_4 \epsilon_0}.$$
Thus $y$ must be $(6sq_n-1,\tilde{L}-\int \ln{|c_{\hat{\lambda}}|}-9C_4 \epsilon_0)-$regular. 
This implies
\begin{align*}
|u(y)|\leq e^{-(\tilde{L}-\int \ln{|c_{\hat{\lambda}}|}-9C_4 \epsilon_0)\frac{1}{4}|y|}<e^{-\frac{\epsilon_1}{5} |y|}\ \ \mathrm{for}\ |y|\geq y(\lambda, \alpha, \epsilon_0, \xi).
\end{align*}
Thus there exists $C_3=C_{\lambda, \alpha, \epsilon_0, \xi}$ such that 
$|u(y)|\leq C_3 e^{-\frac{\epsilon_1}{5}|y|}$ for any $3|n_j|\leq |y|\leq \frac{1}{3}|n_{j+1}|$ and $j\in \N$.

\

\section{Almost reducibility in region II}

{\bf Proof of Theorem $\ref{ar}$}
\

For any $E\in \Sigma_{\lambda}$, take $\theta(E)$ and $\{u_k\}$ as in Theorem $\ref{Etheta}$. Let $\epsilon_1$ be as in (\ref{epsilon1}),
$C_4$ be the absolute constant from Lemma $\ref{uniform}$, and $C_2$ be the absolute constant from Lemma $\ref{polynomialestimate}$. Fix $\max{(32C_2 \xi, 1000\xi)}<\epsilon_0<\min{(\frac{\epsilon_1}{200}, \frac{\epsilon_1}{100C_4})}$. By Lemma $\ref{alexplicit}$, there exists $C$ depending on $\lambda$ and $\alpha$ such that for any $3|n_j|<|k|<\frac{1}{3}|n_{j+1}|$, we have $|u_k|\leq C e^{-\frac{\epsilon_1}{5} |k|}$.

For any $n$, $9|n_j|<n<\frac{1}{9}|n_{j+1}|$, of the form
\begin{equation}\label{nform}
n=rq_m-1<q_{m+1}.\ \footnote{The existence of such $n$ comes from ($\ref{250}$).}
\end{equation}
Let $u(x)=u^{I}(x)=\sum_{k\in I} u_k e^{2\pi i kx}$ with $I=[-[\frac{n}{2}], [\frac{n}{2}]]=[x_1,x_2]$. Define
\begin{align*}
U(x)=
\left(
\begin{matrix}
e^{2\pi i\theta}u(x)\\
u(x-\alpha)
\end{matrix}
\right).
\end{align*}
Let $A(\theta)=A_{\lambda, E}(\theta)$.
By direct computation:
\begin{align*}
A(x)U(x)=
e^{2\pi i\theta}U(x+\alpha)+
\left(
\begin{matrix}
g(x)\\
0
\end{matrix}
\right)
\triangleq
e^{2\pi i \theta} U(x+\alpha)+G(x).
\end{align*}
The Fourier coefficients of $g(x)$ are possibly nonzero only at four points $x_1$, $x_2$, $x_1-1$ and $x_2+1$. Since $|u_k|\leq C_1 e^{-\frac{\epsilon_1}{5} |k|}$ when $3|n_j|<|k|<\frac{1}{3}|n_{j+1}|$, we know that ${\|G(x)\|}_{\frac{\epsilon_1}{20\pi}}\leq C_1 e^{- \frac{\epsilon_1}{20} n}$.

Combining Lemma $\ref{LEregion2}$ and $\ref{uniformupp}$, we have exponential control of the growth of the transfer matrix, for any $\delta>0$ there exists $C_{\delta}>0$ such that
\begin{align*}
\|\tilde{A}_k(x)\|_{\frac{\epsilon_1}{2\pi}}\leq C_{\delta}e^{\delta |k|},\ \ \mathrm{for}\ \mathrm{any}\ k.
\end{align*}
With some effort we are able to get the following significantly improved upper bound:

\begin{thm}\label{polycontrol}
For some $C>0$ depending on $\lambda$ and $\alpha$,
\begin{align*}
\|\tilde{A}_k(x)\|_{\T}\leq C{(1+|k|)}^{C}.
\end{align*}
\end{thm}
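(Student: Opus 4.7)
\bigskip

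\noindent\textbf{Proof plan.} The aim is to upgrade the sub-exponential strip bound $\|\tilde A_k\|_{\frac{\epsilon_1}{2\pi}}\leq C_\delta e^{\delta|k|}$ (combination of Theorem \ref{LE} and Lemma \ref{uniformupp}) to a genuine polynomial bound on the real torus. My strategy is in two movements: first, use the Fourier-side almost-invariant vector $U$ set up just above together with almost localization (Lemma \ref{alexplicit}) to produce polynomial control of $\|\tilde A_k\|$ at the sample points $\theta(E)+j\alpha$; second, extend this to a uniform bound on $\T$ via the polynomial interpolation estimate Lemma \ref{polynomialestimate}.

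For the first movement, I would iterate the approximate cocycle identity to obtain
\[
A_k(x)U(x)=e^{2\pi ik\theta}U(x+k\alpha)+\sum_{j=0}^{k-1}e^{2\pi i(k-1-j)\theta}A_{k-1-j}(x+(j+1)\alpha)G(x+j\alpha).
\]
Choosing $n=rq_m-1$ proportional to $k$ and lying in a good window $9|n_j|<n<|n_{j+1}|/9$, Lemma \ref{alexplicit} supplies $\|G\|_{\frac{\epsilon_1}{20\pi}}\leq Ce^{-\frac{\epsilon_1}{20}n}$, while Lemma \ref{uniformupp} bounds $\|A_{k-1-j}\|_{\frac{\epsilon_1}{20\pi}}$ by $C_\delta e^{\delta k}$; for $\delta\ll\epsilon_1$ the error sum is negligible. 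Since $U$ is a trigonometric polynomial of Fourier degree at most $n/2$ with uniformly bounded coefficients, $\|U\|_\T$ is uniformly bounded, and therefore $\|A_k(\theta(E)+j\alpha)U(\theta(E)+j\alpha)\|$ is uniformly bounded. Using that $U$ and $\overline U$ are linearly independent at generic real points, together with $\det A_k=1$, converts this one-vector bound into a polynomial bound on $\|A_k(\theta(E)+j\alpha)\|$ for $0\leq j\leq k$. Since $\tilde A_k=Q_\lambda(\cdot+k\alpha)A_k Q_\lambda^{-1}$ with $Q_\lambda$ analytic and uniformly bounded above and below on a strip, the same polynomial bound passes to $\tilde A_k$ at those sample points.

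For the second movement, the entries of $\tilde A_k(x)$ (after normalization by the analytic weight $(|c|(x)|c|(x-\alpha))^{-1/2}$ and its iterates) are trigonometric polynomials of degree at most $k$. Applying Lemma \ref{polynomialestimate} with base point $\theta(E)$ and essential degree $k=rq_m-1$ yields
\[
\|\tilde A_k\|_\T\leq C_2\, q_{m+1}^{C_2 r}\sup_{0\leq j\leq k}\|\tilde A_k(\theta(E)+j\alpha)\|.
\]
Under the Diophantine hypothesis $q_{m+1}^{C_2 r}\leq e^{C_2 r\ln q_{m+1}}$ is sub-exponential in $k$; with a careful choice of $m$ and $r$ (say $r$ kept bounded, or $r\ln q_{m+1}\leq C\ln k$) one can force this factor to be truly polynomial in $k$. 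Combined with the uniform (hence polynomial) sample-point bound from the first movement, this delivers $\|\tilde A_k\|_\T\leq C(1+|k|)^C$.

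\noindent\textbf{Main obstacle.} The central difficulty is choreographing the scales: for each $k$ one must find an $n\sim k$ sitting in a good window of the resonance sequence $\{n_j\}$ of $\theta(E)$, which is only guaranteed up to factors of order $|n_{j+1}|/|n_j|$. When $k$ unavoidably falls inside a resonant window, the estimate must instead be obtained by working at the nearest resonance scale and using the almost-reducibility-type control at that scale, which is the mechanism by which Theorem \ref{ar} feeds back into the argument. A secondary technicality is arranging the interpolation parameters $(m,r)$ so that the Diophantine factor $q_{m+1}^{C_2 r}$ becomes genuinely polynomial rather than merely sub-exponential in $k$.
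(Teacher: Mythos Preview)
Your approach diverges substantially from the paper's and, as written, has two genuine gaps.

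\textbf{First gap (circularity).} To pass from a bound on $\tilde A_kU$ to a bound on $\tilde A_k$ you invoke ``$U$ and $\overline U$ are linearly independent at generic real points.'' Quantitatively this means a lower bound on $|\det[U,\overline U]|$. In the paper that lower bound is exactly Theorem~\ref{lowerdet}, whose proof \emph{uses} Theorem~\ref{polycontrol} (the iterates $\tilde A_l$ are controlled via the polynomial bound for $|l|\leq L^2$). So your argument as stated is circular. The paper avoids this by not using $\overline U$ at all at this stage: after establishing only the one-sided lower bound $\|\tilde U(x)\|\geq e^{-2\delta n}$ on a strip (Lemma~\ref{Ulower}), it invokes an abstract completion lemma (Lemma~\ref{columnmatrix}) to produce some $M\in C^\omega(\T,SL(2,\C))$ with first column $\tilde U$ and $\|M\|\leq Ce^{6\delta n}$. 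Conjugating by $M$ (and then a diagonal rescaling) puts $\tilde A$ within $e^{-c n}$ of a constant, giving directly $\sup_{0\leq s\leq e^{cn}}\|\tilde A_s\|_\T\leq e^{Cn}$, hence $\|\tilde A_k\|_\T\leq C(1+|k|)^C$.

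\textbf{Second gap (interpolation is not polynomial).} Even if the sample-point bound were available, Lemma~\ref{polynomialestimate} only yields the factor $q_{m+1}^{C_2 r}$. Under the Diophantine condition one has $\ln q_{m+1}=o(q_m)$, nothing better; with $r$ bounded and $k\asymp q_m$ this gives $e^{o(k)}$, i.e.\ sub-exponential but \emph{not} polynomial in $k$. Your suggested fix ``$r\ln q_{m+1}\leq C\ln k$'' would require $\ln q_{m+1}=O(\ln q_m)$, which is a much stronger arithmetic condition than $\alpha\in\mathrm{DC}$. (There is also a minor technical point: the entries of $\tilde A_k$ are not trigonometric polynomials because of the $|c|^{-1/2}$ factors; one would have to work with $D_k$ and then undo the weights, but this does not rescue the exponent.)

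The paper's route---lower bound on $\|\tilde U\|$, abstract $SL(2,\C)$ completion, conjugation to a near-constant, then iterate---sidesteps both issues: no linear independence of $U,\overline U$ is needed, and the polynomial bound on $\T$ comes for free from the conjugacy, with no interpolation step.
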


\begin{proof}

Let $\tilde{U}(x)=Q(x)U(x)$, $\tilde{G}(x)=Q(x+\alpha)G(x)$, where $Q=Q_{\lambda}$ is given in $(\ref{conjugate})$. 
Since $$\max{(\|Q(x)\|_{\frac{\epsilon_1}{20\pi}}, \|Q^{-1}(x)\|_{\frac{\epsilon_1}{20\pi}})}  \leq C,$$ 
we have
\begin{align*}
\tilde{A}(x)\tilde{U}(x)=e^{2\pi i \theta} \tilde{U}(x+\alpha)+\tilde{G}(x),
\end{align*}
where $\|\tilde{G}(x)\|_{\frac{\epsilon_1}{20\pi}}\leq C e^{-\frac{\epsilon_1}{20}n}$.

\begin{lemma}\label{Ulower}
Let $C_2$ be the constant from Lemma $\ref{polynomialestimate}$, then for any $\delta$, $2C_2\xi<\delta<\frac{\epsilon_0}{16}$, we have
\begin{align*}
\inf_{|\mathrm{Im}{(x)}|\leq\frac{\epsilon_1}{20\pi}}\|\tilde{U}(x)\|\geq e^{-2\delta n},
\end{align*}
for $n>n(\alpha, \delta)$.
\end{lemma}

\begin{proof}
We will prove the statement by contradiction. Suppose for some $x_0\in \{|\mathrm{Im}{(x)}|\leq \frac{\epsilon_1}{20\pi}\}$ we have $\|\tilde{U}(x_0)\|< e^{-2\delta n}$.
Notice that for any $l\in\N$,
\begin{align*}
e^{2\pi i l\theta}\tilde{U}(x_0+l\alpha)=\tilde{A}_l(x_0) \tilde{U}(x_0)-\sum_{m=1}^l e^{2\pi i (m-1)\theta}\tilde{A}_{l-m}(x_0+m\alpha) \tilde{G}(x_0+(m-1)\alpha).
\end{align*}
This implies for $n>n(\delta)$ large enough and for any $0\leq l\leq n$, $\|\tilde{U}(x_0+l\alpha)\| \leq e^{-\delta n}$, thus $\|u(x_0+l\alpha)\|\leq C_{\delta} e^{-\delta n}$.
By Lemma $\ref{polynomialestimate}$, $\|u(x+i\mathrm{Im}(x_0))\|_{\T}\leq C_2 C_{\delta}  e^{C_2\xi n}e^{-\delta n}\leq e^{-\frac{\delta}{2} n}$. 
This contradicts with $\int_{\T} u(x+i\mathrm{Im}(x_0)) \mathrm{d}x=u_0=1$.
\end{proof} $\hfill{} \Box$

\begin{lemma}\label{columnmatrix}\cite{A2}
Let $V$ : $\T\rightarrow \C^2$ be analytic in $|\mathrm{Im}(x)|<\eta$. Assume that $\delta_1<\|V(x)\|<\delta_2^{-1}$ holds on $|\mathrm{Im}(x)|<\eta$. 
Then there exists $M$ : $\T\rightarrow SL(2, \C)$  analytic on $|\mathrm{Im}(x)|<\eta$ with first column $V$ and $\|M\|_\eta\leq C\delta_1^{-2}\delta_2^{-1}(1-\ln(\delta_1\delta_2))$.
\end{lemma}
Applying Lemma $\ref{columnmatrix}$, let $M(x)$ be the matrix with first column $\tilde{U}(x)$. Then $e^{-2\delta n}\leq \|\tilde{U}(x)\|_{\frac{\delta}{\pi}}\leq e^{\delta n}$ and hence $\|M(x)\|_{\frac{\delta}{\pi}}\leq Ce^{6\delta n}$. 
Therefore
\begin{align*}
M^{-1}(x+\alpha)\tilde{A}(x)M(x)=
\left(
\begin{matrix}
e^{2\pi i \theta}   &0\\
0                   &e^{-2\pi i\theta}
\end{matrix}
\right)
+
\left(
\begin{matrix}
\beta_1(x)          &b(x)\\
\beta_3(x)          &\beta_4(x)
\end{matrix}
\right)
\end{align*}
where $\|\beta_1(x)\|_{\frac{\delta}{\pi}},\ \|\beta_3(x)\|_{\frac{\delta}{\pi}},\ \|\beta_4(x)\|_{\frac{\delta}{\pi}}\leq C e^{-\frac{\epsilon_1}{40}n}$, and  $\|b(x)\|_{\frac{\delta}{\pi}}\leq C e^{13\delta n}$. Let
\begin{align*}
\Phi(x)=M(x)
\left(
\begin{matrix}
e^{\frac{\epsilon_1}{160}n}     &0\\
0                                               &e^{-\frac{\epsilon_1}{160}n}
\end{matrix}
\right).
\end{align*} 
Then we would have:

\begin{align*}
{\Phi(x+\alpha)}^{-1}\tilde{A}(x)\Phi(x)=
\left(
\begin{matrix}
e^{2\pi i\theta}    &0\\
0                   &e^{-2\pi i\theta}
\end{matrix}
\right)+
H(x),
\end{align*}
where $\|H(x)\|_{\frac{\delta}{\pi}}\leq Ce^{-\frac{\epsilon_1}{160}n}$, and $\|\Phi(x)\|_{\frac{\delta}{\pi}}\leq Ce^{\frac{\epsilon_1}{80}n}$. Thus
\begin{align*}
\sup_{0\leq s\leq e^{\frac{\epsilon_1}{320}n}}\|\tilde{A}_s(x)\|_{\T}\leq e^{\frac{\epsilon_1}{20}n}
\end{align*}
for $n\geq n(\lambda, \alpha)$ satisfying ($\ref{nform}$).
For $s$ large, there always exists $9|n_j|<n<\frac{1}{9}|n_{j+1}|$ satisfying $(\ref{nform})$ such that $cn \leq\frac{320}{\epsilon_1}\ln{s}\leq n$ with some absolute constant $c$. Thus there exists $C$ depending on $\lambda$ and $\alpha$ such that $\|\tilde{A}_k(x)\|_{\T}\leq C(1+|k|)^{C}$.
$\hfill{} \Box$
\end{proof}

Now we come back to the proof of Theorem $\ref{ar}$. Fix some $n=|n_j|$, and $N=|n_{j+1}|$. Let $u(x)=u^{I_2}(x)$ with $I_2=[-[\frac{N}{9}], [\frac{N}{9}]]$ and $U(x)=\left(\begin{matrix}e^{2\pi i\theta}u(x)\\ u(x-\alpha)\end{matrix}\right)$. Then
\begin{align*}
A(x)U(x)=e^{2\pi i \theta}U(x+\alpha)+G(x) \ \ with\ \ {\|G(x)\|}_{\frac{\epsilon_1}{20\pi}}\leq C e^{-\frac{\epsilon_1}{90}N}.
\end{align*}
Define $U_0(x)=e^{\pi i n_j x}U(x)$. Notice that if $n_j$ is even, then $U_0(x)$ is well-defined on $\T$, otherwise $U_0(x+1)=-U_0(x)$.
\begin{align*}
\tilde{A}(x)\tilde{U}_0(x)&=e^{2\pi i \tilde{\theta}} \tilde{U}_0(x+\alpha)+H(x),
\end{align*}
where $\tilde{\theta}=\theta-\frac{n_j}{2}\alpha$, $\tilde{U}_0(x)=Q(x)U_0(x)$ and $\|H(x)\|_{\frac{\epsilon_1}{20\pi}}\leq Ce^{-\frac{\epsilon_1}{100}N}$.
Consider the matrix $W(x)$ with $\tilde{U}_0(x)$ and $\overline{\tilde{U}_0(x)}$ being its two columns. Then
\begin{align*}
\tilde{A}(x)W(x)=W(x+\alpha)
\left(
\begin{matrix}
e^{2\pi i\tilde{\theta}}   &0\\
0                                  &e^{-2\pi i \tilde{\theta}}
\end{matrix}
\right)
+\tilde{H}(x).
\end{align*}

\begin{thm}\label{lowerdet}Let $L^{-1}=\|2\theta-n_j\alpha\|$. Then for $n>N_0(\lambda, \alpha)$ we have
$$
|\det{W(x)}|
\geq L^{-4C}\ \ \mathrm{for}\ \mathrm{any}\ x\in \T,
$$ where $C$ is the constant appeared in Theorem $\ref{polycontrol}$.
\end{thm}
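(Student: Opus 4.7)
The plan is to combine a near-invariance of $\det W$ under the $\alpha$-shift with a pointwise lower bound extracted from the polynomial control of Theorem \ref{polycontrol}, using the normalization $u_0 = 1$ in a crucial way. First, from the relation $\tilde A(x) W(x) = W(x+\alpha) D + \tilde H(x)$ just displayed (with $D = \operatorname{diag}(e^{2\pi i\tilde\theta}, e^{-2\pi i\tilde\theta})$, $\det D = 1$), taking determinants together with $\det\tilde A\equiv 1$ and bilinearity of the $2\times 2$ determinant yields
\[
|\det W(x) - \det W(x+\alpha)| \leq 2\|W\|_{L^\infty(\T)} \|\tilde H\|_{L^\infty(\T)} \leq CN e^{-\epsilon_1 N/100},
\]
since every entry of $W$ is bounded by $\|u\|_{L^\infty(\T)} \lesssim N$. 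Because $\{x+k\alpha\}_{0\leq k\leq K}$ is $1/K$-dense on $\T$ and $L \le e^{\epsilon_0 n} \le e^{\epsilon_1 N/200}$ (using $(\ref{250})$ and $\epsilon_0 < \epsilon_1/200$), iterating gives a pointwise variation for $\det W$ bounded by $KCNe^{-\epsilon_1 N/100}$, exponentially smaller than the target $L^{-4C}$ for any $K$ polynomial in $e^{\epsilon_1 N/1000}$. Hence it is enough to produce a single $x_*\in\T$ with $|\det W(x_*)|\ge 2L^{-4C}$.

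To produce $x_*$ I express $\det W$ via the $(2,1)$-component of $\tilde A\tilde U_0 = e^{2\pi i\tilde\theta}\tilde U_0(\cdot+\alpha) + H$, which gives $(\tilde U_0)_2(x) = e^{-2\pi i\tilde\theta}\kappa(x-\alpha)(\tilde U_0)_1(x-\alpha) + O(e^{-\epsilon_1 N/100})$, where $\kappa(x)=\sqrt{|c|(x)/|c|(x-\alpha)}$ is real-positive and bounded on $\T$. Substituting into $\det W(x) = (\tilde U_0)_1\overline{(\tilde U_0)_2} - (\tilde U_0)_2\overline{(\tilde U_0)_1}$ yields
\[
\det W(x) = 2i\kappa(x-\alpha)\operatorname{Im}\!\bigl[e^{2\pi i\tilde\theta}(\tilde U_0)_1(x)\overline{(\tilde U_0)_1(x-\alpha)}\bigr] + O(e^{-\epsilon_1 N/100}).
\]
Since $Q$ is bounded above and below on $\T$ and $u_0 = 1$, we have $\|\tilde U_0\|_{L^2(\T)}^2\gtrsim \sum_k|u_k|^2 \ge 1$, so some $x_1$ satisfies $\|\tilde U_0(x_1)\|\ge c_0>0$. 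Applying the polynomial bound $\|\tilde A_k\|_\T\leq C(1+|k|)^C$ to the iterated near-eigenvalue relation $\tilde U_0(x+k\alpha) = e^{-2\pi ik\tilde\theta}\tilde A_k(x)\tilde U_0(x) + (\text{exp small})$, and choosing $k\sim L$ so that $k\alpha$-translates are $L^{-1}$-dense in $\T$, propagates this to $\inf_\T\|\tilde U_0\|\gtrsim L^{-C}$, and analogously $|P(x)| := |(\tilde U_0)_1(x)|\,|(\tilde U_0)_1(x-\alpha)| \gtrsim L^{-2C}$ uniformly in $x$.

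Finally, decomposing $\operatorname{Im}[e^{2\pi i\tilde\theta}P(x)] = \cos(2\pi\tilde\theta)\operatorname{Im}P(x) + \sin(2\pi\tilde\theta)\operatorname{Re}P(x)$ with $|\sin 2\pi\tilde\theta|\asymp L^{-1}$ and $|\cos 2\pi\tilde\theta|$ close to $1$, and using $|P(x)|\gtrsim L^{-2C}$ uniformly, one selects $x_*$ at which the two contributions do not cancel, obtaining $|\operatorname{Im}[e^{2\pi i\tilde\theta}P(x_*)]|\gtrsim L^{-2C-1}$, hence $|\det W(x_*)|\gtrsim L^{-2C-1}\ge L^{-4C}$. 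The main obstacle is this last step: in the exactly resonant case $2\tilde\theta\in\Z$ the vector $\tilde U_0$ could be a real vector times a phase, making both $\operatorname{Im}P\equiv 0$ and $\sin 2\pi\tilde\theta = 0$ so that $\det W$ vanishes; quantitatively ruling out this near-degeneration requires using that $\sin 2\pi\tilde\theta$ is of size exactly $L^{-1}$ (by hypothesis) together with the polynomial bound on $\tilde A_k$, and carefully tracking all polynomial losses to end at exponent $4C$ rather than a worse one.
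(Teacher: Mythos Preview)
Your proposal has a genuine gap at precisely the point you yourself flag as ``the main obstacle.'' The entire content of Theorem \ref{lowerdet} is to rule out the near-degeneracy $\tilde U_0\approx\kappa\,\overline{\tilde U_0}$, and your final paragraph defers exactly this. Knowing $|P(x_*)|\gtrsim L^{-2C}$ only yields $|\det W(x_*)|\approx 2\kappa\,|P(x_*)|\cdot|\sin(2\pi\tilde\theta+\arg P(x_*))|$, and nothing you have written prevents $\arg P(x_*)$ from lying within $o(L^{-1})$ of $-2\pi\tilde\theta$ for every $x_*$; that is precisely the near-real-phase scenario you describe. There is also an earlier gap: the uniform bound $|P(x)|=|(\tilde U_0)_1(x)|\,|(\tilde U_0)_1(x-\alpha)|\gtrsim L^{-2C}$ does not follow ``analogously'' from $\inf_{\T}\|\tilde U_0\|\gtrsim L^{-C}$, since the relation $(\tilde U_0)_2(x)\approx e^{-2\pi i\tilde\theta}\kappa(x-\alpha)(\tilde U_0)_1(x-\alpha)$ shows that $\|\tilde U_0(x)\|$ large forces only \emph{one} of $|(\tilde U_0)_1(x)|$, $|(\tilde U_0)_1(x-\alpha)|$ to be large, not both.

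The paper's argument is structurally different and bypasses both issues. It argues by contradiction: assuming $\|\tilde U_0(x_0)-\kappa\,\overline{\tilde U_0(x_0)}\|<L^{-4C}$, iterating the near-eigenvalue relation together with the polynomial bound of Theorem \ref{polycontrol} gives, after $l$ steps, that $e^{2\pi i l\tilde\theta}\tilde U_0(x_0+l\alpha)-\kappa\,e^{-2\pi i l\tilde\theta}\overline{\tilde U_0(x_0+l\alpha)}$ is small. Choosing $l\approx L/4$ turns the relative phase $e^{4\pi i l\tilde\theta}$ into approximately $-1$ (since $\|2\tilde\theta\|=L^{-1}$), so at that point one obtains $\|\tilde U_0+\kappa\,\overline{\tilde U_0}\|$ small. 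Separately, one truncates $\tilde U_0$ to a trigonometric polynomial of degree $O(n)$ and applies Lemma \ref{polynomialestimate} to upgrade the near-degeneracy $\|\tilde U_0-\kappa\,\overline{\tilde U_0}\|$ from $L^{1/2}$ orbit points to a uniform bound on $\T$. At the single point $x_0+\tfrac{L}{4}\alpha$ one then has both $\tilde U_0-\kappa\,\overline{\tilde U_0}$ and $\tilde U_0+\kappa\,\overline{\tilde U_0}$ small, hence $\|\tilde U_0\|$ itself is small there, contradicting Lemma \ref{Ulower}. This ``$\pm$'' trick, which exploits that $\|2\tilde\theta\|=L^{-1}$ is small but \emph{nonzero}, is exactly the mechanism missing from your sketch.
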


\begin{proof} First, we fix $\xi_1<\frac{\epsilon_0}{1600}$ so that $\|k\alpha\|\geq C_{\xi_1}e^{-\xi_1 |k|}$ for any $k\neq 0$. We have the following estimate about $L$:
\begin{lemma} $e^{\epsilon_0 n}\leq L\leq e^{4\xi_1 N}$.
\begin{align*}
e^{-2\xi_1 N}\leq \|(n_{j+1}-n_j)\alpha\|\leq 2\|n_j\alpha-2\theta\|=2L^{-1}\leq 2 e^{-\epsilon_0 n}\ \ \mathrm{for}\ n\geq N(\xi_1).
\end{align*}
\end{lemma}
Now we prove by contradiction. Suppose there exists $\kappa$ and $x_0\in \T$ such that $\|\tilde{U}_0(x_0)-\kappa \overline{\tilde{U}_0(x_0)}\|<L^{-4C}$. 
Then 
\begin{align*}
  &\|\tilde{U}_0(x_0+l\alpha)e^{2\pi i l\tilde{\theta}}-\kappa \overline{\tilde{U}_0(x_0+l\alpha)}e^{-2\pi il\tilde{\theta}}\|\\
\leq &\|\sum_{m=0}^{l-1}\tilde{A}_{l-m}(x_0+m\alpha)H(x_0+m\alpha)-\kappa \sum_{m=0}^{l-1}\tilde{A}_{l-m}(x_0+m\alpha)\overline{H(x_0+m\alpha)}\|+\|A_l(x_0)\| L^{-4C} \\
\leq &C L^{2C} e^{-\frac{\epsilon_1}{100}N}+CL^{-2C}<L^{-C}.
\end{align*}
for $0\leq |l|\leq L^2$. 
If we take $j=\frac{L}{4}$, then 
\begin{align}
\|\tilde{U}_0(x_0+\frac{L}{4}\alpha)+\kappa \overline{\tilde{U}_0(x_0+\frac{L}{4}\alpha)}\|<L^{-1}.
\end{align} 
Next since $\|U_0(x)\|_{\T}\leq n$, we have $\|\tilde{U}_0(x)\|_{\T}\leq C n$. 
Thus 
\begin{align*}
\|\tilde{U}_0(x_0+l\alpha)-\kappa \overline{\tilde{U}_0(x_0+l\alpha)}\|<L^{-\frac{1}{3}}\ \ \mathrm{for}\ 0\leq |l|\leq L^{\frac{1}{2}}.
\end{align*} 
For any analytic function $f(x)=\sum_{k\in \Z} \hat{f}_k e^{2\pi i kx}$, define $f_{[-m,m]}(x)=\sum_{|k|\leq m} \hat{f}_k e^{2\pi i k x}$. 
For any column vector $V(x)=\left(\begin{matrix} v^{(1)}(x)\\ v^{(2)}(x)\end{matrix}\right)$, let $V_{[-m,m]}(x)=\left(\begin{matrix} v^{(1)}_{[-m,m]}(x)\\ v^{(2)}_{[-m,m]}(x)\end{matrix}\right)$.
Now let us define $\tilde{U}_0^{[9n]}(x)=Q(x)e^{\pi i n_j x}U_{[-9n, 9n]}(x)$. Then 
\begin{align*}
\|\tilde{U}_0^{[9n]}(x)-\tilde{U}_0(x)\|_{\T}\leq C e^{-\frac{9}{5}\epsilon_1 n}.
\end{align*} 
Consider $[e^{-\pi i n_jx}\tilde{U}_0^{[9n]}(x)]_{[-18n, 18n]}(x)e^{\pi i n_jx}$. This function differs from a polynomial with essential degree $36n$ only by a multiple of $e^{\pi i n_jx}$. 
Notice that $Q(x)$ is analytic in $\{x:|\mathrm{Im}(x)|\leq \frac{\epsilon_1}{4\pi}\}$, thus $|\hat{Q}(k)|\leq C e^{-\frac{\epsilon_1}{2} |k|}$. 
Then 
\begin{align*}
|\widehat{e^{-\pi in_jx}\tilde{U}_0^{[9n]}}(k)|\leq \sum_{|m|\leq 9n}|\hat{Q}(k-m)\hat{U}(m)|\leq C n e^{-\frac{\epsilon_1}{2} (|k|-9n)}\ \ \mathrm{for}\ |k|\geq 18n.
\end{align*}
Thus 
\begin{align*}
\|e^{-\pi i n_jx}\tilde{U}_0^{[9n]}(x)-[e^{-\pi i n_jx}\tilde{U}_0^{[9n]}]_{[-18n, 18n]}(x)\|_{\T}\leq e^{-4\epsilon_1 n},
\end{align*}
\begin{align*}
\|\tilde{U}_0(x)-[e^{-\pi i n_jx}\tilde{U}_0^{[9n]}]_{[-18n, 18n]}(x)e^{\pi i n_jx}\|_{\T} \leq e^{-4\epsilon_1 n}.
\end{align*}
Hence 
\begin{align*}
   &\|[e^{-\pi i n_jx}\tilde{U}_0^{[9n]}]_{[-18n, 18n]}(x_0+l\alpha)e^{2\pi i n_j(x_0+l\alpha)}-
      \kappa\overline{[e^{-\pi i n_jx}\tilde{U}_0^{[9n]}]_{[-18n, 18n]}(x_0+l\alpha)}\|_{\T}\\
< &2L^{-\frac{1}{3}}+e^{-4\epsilon_1 n},
\end{align*}
for $|l|\leq L^{\frac{1}{2}}$. Notice that 
\begin{align*}
[e^{-\pi i n_jx}\tilde{U}_0^{[9n]}]_{[-18n, 18n]}(x)e^{2\pi i n_jx}-\kappa\overline{[e^{-\pi i n_jx}\tilde{U}_0^{[9n]}]_{[-18n, 18n]}(x)}
\end{align*} 
is a polynomial whose essential degree is at most $37n$. Thus by Lemma $\ref{polynomialestimate}$, we would have 
\begin{align*}
\|[e^{-\pi i n_jx}\tilde{U}_0^{[9n]}]_{[-18n, 18n]}(x)e^{\pi i n_jx}-\kappa\overline{[e^{-\pi i n_jx}\tilde{U}_0^{[9n]}]_{[-18n, 18n]}(x)e^{\pi i n_jx}}\|_{\T}<L^{-\frac{1}{4}}+e^{-2\epsilon_1 n}.
\end{align*}
Hence $\|\tilde{U}_0(x)-\kappa \overline{\tilde{U}_0(x)}\|_{\T}<L^{-\frac{1}{4}}+2e^{-2\epsilon_1 n}$. 
But combining with $(9.1)$ we would get $\|\tilde{U}_0(x_0+\frac{L}{4}\alpha)\|<2L^{-\frac{1}{4}}+2e^{-2\epsilon_1 n}$, 
but this contradicts with $\inf_{x\in \T}\|\tilde{U}_0(x)\|>e^{-2\delta n}$ since $\delta<\frac{\epsilon_0}{16}$.$\hfill{} \Box$
\end{proof}

Now for $n>N_0(\lambda,\alpha)$, take $S(x)=\mathrm{Re}\tilde{U}_0(x)$ and $T(x)=\mathrm{Im}\tilde{U}_0(x)$. Let $W_1(x)$ be the matrix with columns $S(x)$ and $T(x)$. Notice that $\det{W}_1(x)$ is well-defined on $\T$ and $\det{W}_1 (x)\neq 0$ on $\T$, hence without loss of generality we could assume $\det{W_1}(x)>0$ on $\T$, otherwise we simply take $W_1(x)$ to be the matrix with columns $S(x)$ and $-T(x)$. Then
\begin{align*}
\|\tilde{A}(x) W_1(x)-W_1(x+\alpha)R_{-\tilde{\theta}}\|_{\T}\leq Ce^{-\frac{\epsilon_1}{45}N}.
\end{align*}
By taking determinant, we get
\begin{align*}
\det{W_1}(x)=\det{W_1}(x+\alpha)+O(e^{-\frac{\epsilon_1}{50}N})\ \ \mathrm{on}\ \T.
\end{align*}
Since $\det{W_1}(x)$ is analytic on $|\mathrm{Im}x|\leq \frac{\epsilon_1}{20\pi}$, by considering the Fourier coefficients we could get
\begin{align*}
\det{W_1}(x)=w_0+O(e^{-\frac{\epsilon_1}{100}N})\ \ \mathrm{on}\ \T,
\end{align*}
where $w_0\geq L^{-5C}$. Thus $\det{W_1}(x)$ is almost a positive constant.

Define $W_2(x)={\det{W_1(x)}}^{-\frac{1}{2}}W_1(x)$. Then $W_2(x)\in C^{\omega}(\T)$ and $\det{W_2(x)}=1$. We have
\begin{align*}
W_2^{-1}(x+\alpha)\tilde{A}(x)W_2(x)=\frac{{\det{W_1(x+\alpha)}}^{\frac{1}{2}}}{{\det{W_1(x)}}^{\frac{1}{2}}}R_{-\tilde{\theta}}+O(e^{-\frac{\epsilon_1}{100}N})\ \ \mathrm{on}\ \T,
\end{align*}
\begin{align*}
{W_2^{-1}(x+\alpha)} \tilde{A}(x) W_2(x)=R_{-\tilde{\theta}} + O(e^{-\frac{\epsilon_1}{200}N})\ \ \mathrm{on}\ \T.
\end{align*}
Now let's prove $\deg{W_2}(x) \leq 36 n$. $\deg{W_2}(x)$ is the same as the degree of its columns. For \linebreak $M: {\R}/{2\Z}\rightarrow {\R^2}$, we say $\deg{M}=k$ if $M$ is homotopic to $\left(\begin{matrix}\cos{k\pi x}\\ \sin{k\pi x}\end{matrix}\right)$.

For some constant $c>0$, we obviously have
\begin{align*}
\int_{\T}\|S(x)\|\ \mathrm{d}x+ \int_{\T}\|T(x)\|\ \mathrm{d}x \geq \int_{\T}\|S(x)+i T(x)\|\ \mathrm{d}x=\int_{\T} \|\tilde{U}_0(x)\|\ \mathrm{d}x \geq c.
\end{align*}
Without loss of generality we could assume $\int_{\T}\|S(x)\|\ \mathrm{d}x>\frac{c}{2}$. Also
\begin{align*}
\tilde{A}(x) S(x)=S(x+\alpha)\cos{2\pi\tilde{\theta}}-T(x+\alpha)\sin{2\pi\tilde{\theta}}+O(e^{-\frac{\epsilon_1}{45}N})\ \ \mathrm{on}\ \T.
\end{align*}
Then since $\|2\tilde{\theta}\|=L^{-1}$,
\begin{align*}
\tilde{A}(x) S(x)=S(x+\alpha)+O(L^{-\frac{1}{2}})\ \ \mathrm{on}\ \T.
\end{align*}
First we prove $\inf_{x\in {\T}} \|S(x)\|\geq e^{-2\epsilon_1 n}$. 
Suppose otherwise. Then there exists $x_0\in \T$, so that $\|S(x_0)\|<e^{-2\epsilon_1 n}$. 
Then $\|\mathrm{Re}\tilde{U}_0(x_0+l\alpha)\|<e^{-\frac{\epsilon_0}{8} n}$ for $|l|<e^{\frac{\epsilon_0}{4C}n}$, where $C$ is the constant that appeared in Theorem $\ref{polycontrol}$.
We have already shown that
\begin{align*}
\|\tilde{U}_0(x)-[e^{-\pi i n_jx}\tilde{U}_0^{[9n]}]_{[-18n, 18n]}e^{\pi i n_jx}\|_{\T}<e^{-4\epsilon_1 n}.
\end{align*}
Thus 
$$\|\mathrm{Re}[e^{-\pi i n_jx}\tilde{U}_0^{[9n]}]_{[-18n, 18n]}(x_0+l\alpha)\|<e^{- \frac{\epsilon_0}{16}n}$$
for $|l|<e^{\frac{\epsilon_0}{4C}n}$.
However $\mathrm{Re}[e^{-\pi i n_jx}\tilde{U}_0^{[9n]}]_{[-18n, 18n]}$ is a polynomial with essential degree at most $36n$. 
Using Lemma $\ref{polynomialestimate}$ we are able to get $\|\mathrm{Re}[e^{-\pi i nx}\tilde{U}_0^{[9n]}]_{[-18n, 18n]}e^{\pi i n_jx}\|_{\T}<e^{-\frac{\epsilon_0}{32}n}$, and thus
$\|\mathrm{Re}\tilde{U}_0(x)\|_{\T}<e^{-\frac{\epsilon_0}{64}n}$ which is a contradiction to $\int_{\T}\|\mathrm{Re}\tilde{U}_0(x)\|\ \mathrm{d}x>\frac{c}{2}$.
At the meantime, we also get $\|S(x)-\mathrm{Re}[e^{-\pi i n_jx}\tilde{U}_0^{[9n]}]_{[-18n, 18n]}(x)e^{\pi i n_jx}\|_{\T} \triangleq \|S(x)-h(x)\|_{\T} \leq e^{-4\epsilon_1 n}$.
The first column of $W_2(x)$ is ${\det{W}_1(x)}^{-\frac{1}{2}}S(x)$. We have
\begin{align*}
       &\|\frac{S(x)}{{\det{W}_1(x)}^{\frac{1}{2}}}-\frac{h(x)}{{w_0}^{\frac{1}{2}}}\|\\
\leq &\frac{1}{|{\det{W}_1(x)}^{\frac{1}{2}}|}\|S(x)-h(x)+(1-\frac{{\det{W}_1(x)}^{\frac{1}{2}}}{{w_0}^{\frac{1}{2}}})h(x)\|\\
\leq & L^{2C}(e^{-4\epsilon_1 n}+L^{8C} e^{-\frac{\epsilon_1}{100}N})\\
\leq & e^{-3\epsilon_1 n}< \|\frac{S(x)}{{\det{W}_1(x)}^{\frac{1}{2}}}\|\ \ \mathrm{on}\ \T.
\end{align*}
Thus by Rouch$\acute{e}$'s theorem $|\deg{W}_2(x)|=|\deg{h}(x)|\leq 19n$.
Notice that
\begin{align*}
|\rho(\alpha, W^{-1}_2\tilde{A}W_2)+\tilde{\theta}|<C e^{-\frac{\epsilon_1}{200}N}.
\end{align*} 
Then, by $\ref{rhoconju}$ for some $|m|\leq 19n$:
\begin{align*}
|\rho(\alpha, \tilde{A})-\frac{m}{2}\alpha+\tilde{\theta}|<C e^{-\frac{\epsilon_1}{200}N}.
\end{align*}

\appendix
\section{\\}
When $\lambda$ belongs to region II, let $\epsilon_2=\ln{\frac{\lambda_2+\sqrt{\lambda_2^2-4\lambda_1\lambda_3}}{\lambda_1+\lambda_3+\sqrt{(\lambda_1+\lambda_3)^2-4\lambda_1\lambda_3}}}>\epsilon_1$. Then $c(x)$ is analytic and nonzero on $|\mathrm{Im}(x)|<\frac{\epsilon_2}{2\pi}$. Furthermore, the winding number of $c(\cdot +i\epsilon)$ is equal to zero when $|\epsilon|< \frac{\epsilon_2}{2\pi}$.
\begin{lemma}\label{c/|c|}
When $\lambda$ belongs to region II, we can find an analytic function $f(x)$ on $|\mathrm{Im}(x)|\leq \frac{\epsilon_1}{2\pi}$ such that $c(x)=|c|(x)e^{f(x+\alpha)-f(x)}$ and $\tilde{c}(x)=|c|(x)e^{-f(x+\alpha)+f(x)}$.
\end{lemma}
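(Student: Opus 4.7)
The plan is to reduce the two identities to a single cohomological equation over $\T$ and solve it by Fourier analysis, relying on the Diophantine condition on $\alpha$ that is assumed throughout the paper.

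First, I would note that the second identity follows from the first together with $c(x)\tilde c(x) = |c|^2(x)$: dividing gives $\tilde c = |c|^2/c = |c|\,e^{-(f(x+\alpha)-f(x))}$. So it suffices to produce an analytic $f$ on $|\mathrm{Im}\,x|\leq \epsilon_1/(2\pi)$ with $c(x)/|c|(x) = e^{f(x+\alpha)-f(x)}$. Since $c$ is analytic, nonvanishing, and has zero winding number on $|\mathrm{Im}\,x|<\epsilon_2/(2\pi)$ (as noted just before the lemma), a single-valued analytic branch of $\log c$ exists there; I would normalize it by $\log c(-\alpha/2)=\ln(\lambda_1+\lambda_2+\lambda_3)\in \R$ (note $c(-\alpha/2) = \lambda_1+\lambda_2+\lambda_3 > 0$ in region II). The algebraic identity $\tilde c(x) = c(-x-\alpha)$, immediate from the formulas using $\lambda_j\in \R$, transfers the same properties to $\tilde c$, and one defines the compatible branch $\log\tilde c(x):= \log c(-x-\alpha)$. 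Setting
\begin{align*}
h(x):=\tfrac12\bigl(\log c(x)-\log\tilde c(x)\bigr),
\end{align*}
$h$ is analytic on $|\mathrm{Im}\,x|<\epsilon_2/(2\pi)$, and $e^{h(x)} = c(x)/|c|(x)$ (both sides equal $1$ at $x=-\alpha/2$, pinning the correct branch of the square root, and then agreeing everywhere by analytic continuation).

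Second, I would verify the solvability condition $\int_\T h \,dx = 0$. On $\T$, the reality of $\lambda_j$ yields $\tilde c(x)=\overline{c(x)}$, and the two branches match at the real value $c(-\alpha/2)$, so $\log\tilde c(x)=\overline{\log c(x)}$ on $\T$ and $h(x) = i\arg c(x)$. The change of variables $x\mapsto -x-\alpha$ together with $\arg c(-x-\alpha)= \arg \overline{c(x)} = -\arg c(x)$ gives $\int_\T \arg c\,dx = -\int_\T\arg c\,dx$, so the integral vanishes.

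Third, I would solve $f(x+\alpha)-f(x)=h(x)$ by setting $\hat f(k):= \hat h(k)/(e^{2\pi ik\alpha}-1)$ for $k\neq 0$ and $\hat f(0):=0$. Analyticity of $h$ on $|\mathrm{Im}\,x|<\epsilon_2/(2\pi)$ gives $|\hat h(k)|\leq C_\delta e^{-(\epsilon_2-\delta)|k|}$ for any $\delta>0$, while $\beta(\alpha)=0$ yields $|e^{2\pi ik\alpha}-1|\geq C_\xi e^{-\xi|k|}$ for any $\xi>0$. Choosing $\xi,\delta$ so small that $\epsilon_2-\delta-\xi>\epsilon_1$ (possible because $\epsilon_2>\epsilon_1$) produces exponential decay $|\hat f(k)|\leq C' e^{-(\epsilon_1+\eta)|k|}$ for some $\eta>0$, giving $f$ analytic on $|\mathrm{Im}\,x|\leq \epsilon_1/(2\pi)$ as required.

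The only delicate point—more a bookkeeping issue than a genuine obstacle—is the consistent choice of logarithmic branches so that $e^{h} = c/|c|$ holds as an honest equality, not merely modulo $2\pi i\Z$, and so that the mean-zero identity holds literally rather than up to an integer multiple of $2\pi i$. Pinning all branches at the real positive value $c(-\alpha/2)=\lambda_1+\lambda_2+\lambda_3$ and propagating by analytic continuation resolves this cleanly.
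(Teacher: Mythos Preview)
Your argument is correct and follows essentially the same route as the paper: take analytic logarithms of $c$ and $\tilde c$ (zero winding number), show the difference has zero mean on $\T$ via the symmetry $\tilde c(x)=\overline{c(x)}$, and solve the resulting cohomological equation $f(x+\alpha)-f(x)=h(x)$. Your version is in fact more careful than the paper's, making explicit the use of the Diophantine condition and the inequality $\epsilon_2>\epsilon_1$ to absorb the small-divisor loss and land on the closed strip $|\mathrm{Im}\,x|\leq \epsilon_1/(2\pi)$.
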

\begin{proof}
Since the winding numbers of $c(x)$ and $\tilde{c}(x)$ are $0$ on $|\mathrm{Im}(x)|\leq \frac{\epsilon_1}{2\pi}$, 
there exist analytic functions $g_1(x)$ and $g_2(x)$ on $|\mathrm{Im}(x)|\leq\frac{\epsilon_1}{2\pi}$, such that $c(x)=e^{g_1(x)}$ and $\tilde{c}(x)=e^{g_2(x)}$. 
Notice that
\begin{align*}
\int_{\T} \ln{|c(x)|}\ \mathrm{d}x=\int_{\T} \ln{|\tilde{c}(x)}|\ \mathrm{d}x\\
\int_{\T} \arg{c(x)}\ \mathrm{d}x=\int_{\T} \arg{\tilde{c}(x)}\ \mathrm{d}x,
\end{align*}
so there exists an analytic function $f(x)$ such that $2f(x+\alpha)-2f(x)=g_1(x)-g_2(x)$. Then $c(x)=|c|(x)e^{f(x+\alpha)-f(x)}$.
\end{proof} $\hfill{} \Box$

\begin{lemma}\label{conjugate}
When $\lambda$ belongs to region II, there exists an analytic matrix $Q_{\lambda}(x)$ defined on $|\mathrm{Im}(x)|\leq \frac{\epsilon_1}{2\pi}$ such that
\begin{align*}
Q_{\lambda}^{-1}(x+\alpha)\tilde{A}_{\lambda, E}(x)Q_{\lambda}(x)=A_{\lambda, E}(x).
\end{align*}
\end{lemma}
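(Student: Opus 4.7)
The plan is to exhibit $Q_{\lambda}$ explicitly as a diagonal matrix and solve the conjugacy equation entry-by-entry, using the coboundary representation of $c$ from Lemma \ref{c/|c|}.

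First, I would rewrite the desired identity as $\tilde{A}_{\lambda,E}(x)Q_{\lambda}(x) = Q_{\lambda}(x+\alpha) A_{\lambda,E}(x)$ and try the ansatz $Q_{\lambda}(x) = \mathrm{diag}(p(x), q(x))$. Comparing the four entries, the common factor $E - 2\cos 2\pi x$ in the $(1,1)$ position cancels and leaves the scalar cohomological equation $p(x+\alpha)/p(x) = c(x)/\sqrt{|c|(x)|c|(x-\alpha)}$; the $(2,1)$ entry yields $q(x+\alpha) = \sqrt{|c|(x)/|c|(x-\alpha)}\, p(x)$; the $(2,2)$ entry is trivially $0=0$; and the $(1,2)$ entry turns out to follow from the first two after using $\tilde c\, c = |c|^2$.

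Second, Lemma \ref{c/|c|} gives $c(x) = |c|(x) e^{f(x+\alpha)-f(x)}$, so the cohomological equation for $p$ simplifies to $p(x+\alpha)/p(x) = \sqrt{|c|(x)/|c|(x-\alpha)}\, e^{f(x+\alpha)-f(x)}$. This is solved explicitly by
$$p(x) = \sqrt{|c|(x-\alpha)}\, e^{f(x)}, \qquad q(x) = \sqrt{|c|(x-\alpha)}\, e^{f(x-\alpha)},$$
i.e.\ $Q_{\lambda}(x) = \sqrt{|c|(x-\alpha)}\,\mathrm{diag}(e^{f(x)}, e^{f(x-\alpha)})$, and a one-line calculation using $\tilde c(x-\alpha) = |c|(x-\alpha)^2/c(x-\alpha)$ confirms the remaining $(1,2)$ entry.

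Third, to verify analyticity and invertibility on $|\mathrm{Im}(x)| \leq \epsilon_1/(2\pi)$: in region II both $c$ and $\tilde c$ are analytic and nonvanishing with zero winding number on this strip (as discussed at the start of the appendix, since $\epsilon_1 < \epsilon_2$), so we may write $c = e^{g_1}$ and $\tilde c = e^{g_2}$ with $g_1, g_2$ analytic, and then take the single-valued branch $\sqrt{|c|} := e^{(g_1+g_2)/4}$. Combined with the analyticity of $f$ from Lemma \ref{c/|c|}, this makes both diagonal entries of $Q_{\lambda}$ analytic and nonvanishing, with $\det Q_{\lambda}(x) = |c|(x-\alpha)\, e^{f(x)+f(x-\alpha)} \neq 0$ on the strip. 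The argument is elementary once the ansatz is identified; the only delicate point, and the reason region II is essential, is the single-valuedness of the various square roots, so the main ``obstacle'' is bookkeeping rather than substance.
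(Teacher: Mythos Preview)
Your proof is correct and essentially coincides with the paper's: both arrive at the very same diagonal conjugacy $Q_{\lambda}(x)=\sqrt{|c|(x-\alpha)}\,\mathrm{diag}(e^{f(x)},e^{f(x-\alpha)})$ using Lemma~\ref{c/|c|}, the paper by directly factoring $\tilde{A}_{\lambda,E}$ and the student by positing a diagonal ansatz and solving entrywise. The analyticity/invertibility discussion (zero winding number of $c,\tilde c$ on the strip, hence single-valued square roots) is also the same in substance.
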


\begin{proof}
\begin{align*}
\tilde{A}_{\lambda, E}(x)=&
\frac{1}{\sqrt{|c|(x)|c|(x-\alpha)}}
\left(
\begin{matrix}
1       &0\\
0       &\sqrt{\frac{\tilde{c}(x)}{c(x)}}
\end{matrix}
\right)
\left(
\begin{matrix}
E-v(x)        &-\tilde{c}(x-\alpha)\\
c(x)  &0
\end{matrix}
\right)
\left(
\begin{matrix}
1        &0\\
0        &\sqrt{\frac{c(x-\alpha)}{\tilde{c}(x-\alpha)}}
\end{matrix}
\right)\\
=&
\frac{c(x)}{\sqrt{|c|(x)|c|(x-\alpha)}}
\left(
\begin{matrix}
1       &0\\
0       &\sqrt{\frac{\tilde{c}(x)}{c(x)}}
\end{matrix}
\right)
A(x)
\left(
\begin{matrix}
1        &0\\
0        &\sqrt{\frac{c(x-\alpha)}{\tilde{c}(x-\alpha)}}
\end{matrix}
\right)\\
=&
e^{f(x+\alpha)}\sqrt{|c|(x)}
\left(
\begin{matrix}
1       &0\\
0       &\sqrt{\frac{\tilde{c}(x)}{c(x)}}
\end{matrix}
\right)
A(x)
\left\lbrace e^{f(x)}\sqrt{|c|(x-\alpha)}
\left(
\begin{matrix}
1        &0\\
0        &\sqrt{\frac{\tilde{c}(x-\alpha)}{c(x-\alpha)}}
\end{matrix}
\right)
\right\rbrace^{-1}\\
=&Q_{\lambda}(x+\alpha)A_{\lambda, E}(x)Q_{\lambda}^{-1}(x).
\end{align*}
\end{proof} $\hfill{} \Box$

\begin{lemma}\label{LEregion2}
If $\alpha$ is irrational, $\lambda$ belongs to region II, $E\in\Sigma (\lambda)$, then $L(\alpha, A_{\lambda, E}(\cdot+i\epsilon))=L(\alpha, \tilde{A}_{\lambda, E}(\cdot+i\epsilon))=0$ for $|\epsilon|\leq \frac{\epsilon_1}{2\pi}$.
\end{lemma}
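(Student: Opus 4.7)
My plan is to chain the analytic conjugacy from Lemma \ref{conjugate}, the factorization $A_{\lambda,E}=c_\lambda^{-1}D_{\lambda,E}$, and the Aubry-duality identity \eqref{LE1} to reduce the statement to an explicit algebraic matching that is forced by the formula \eqref{epsilon1} for $\epsilon_1(\lambda)$.

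First, by Lemma \ref{conjugate} together with the nonvanishing of $c_\lambda$ on the strip (Lemma \ref{c/|c|}), the conjugator $Q_\lambda$ is analytic and invertible on $|\mathrm{Im}(x)|\leq\epsilon_1/(2\pi)$. Hence $A_{\lambda,E}(\cdot+i\epsilon)$ and $\tilde A_{\lambda,E}(\cdot+i\epsilon)$ are analytically conjugate on the strip and share a common Lyapunov exponent $\omega(\epsilon)$. Since $\tilde A_{\lambda,E}$ takes real values on $\T$, the function $\omega$ is convex, even, and nonnegative, with $\omega(0)=0$ because $E\in\Sigma_\lambda$ lies in the subcritical region II (the well-known real-line vanishing of the Lyapunov exponent on the spectrum). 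It therefore suffices to prove $\omega(\epsilon_1/(2\pi))=0$.

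Second, write $A_{\lambda,E}(\theta)=c_\lambda(\theta)^{-1}D_{\lambda,E}(\theta)$ with the entire matrix
\[
D_{\lambda,E}(\theta)=\begin{pmatrix} E-2\cos 2\pi\theta & -\tilde c(\theta-\alpha) \\ c(\theta) & 0 \end{pmatrix},
\]
so that
\[
\omega(\epsilon)=L(\alpha,D_{\lambda,E,\epsilon})-\int_\T\ln|c_\lambda(\theta+i\epsilon)|\,d\theta.
\]
By Lemma \ref{c/|c|}, $c_\lambda$ is analytic and nonvanishing on the slightly larger strip $|\mathrm{Im}(\theta)|<\epsilon_2/(2\pi)$, so $\ln|c_\lambda|$ is harmonic there and the integral is independent of $\epsilon$. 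Hence it reduces to checking $L(\alpha,D_{\lambda,E,\epsilon_1/(2\pi)})=\int_\T\ln|c_\lambda|$.

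Third, apply Aubry duality. Since $\lambda$ is in region II, the dual $\hat\lambda$ lies in region I with $E/\lambda_2\in\Sigma_{\hat\lambda}$, and \eqref{LE1} gives $L(\alpha,A_{\hat\lambda,E/\lambda_2})=\epsilon_1(\lambda)$. Applying the same factorization to the dual cocycle yields $L(\alpha,D_{\hat\lambda,E/\lambda_2})=\epsilon_1(\lambda)+\int_\T\ln|c_{\hat\lambda}|$. The key algebraic identity is that $D_{\lambda,E}(\cdot+i\epsilon_1/(2\pi))$ is, up to a scalar rescaling and a tame conjugacy built from the coboundary $c=|c|e^{f(\cdot+\alpha)-f(\cdot)}$ in Lemma \ref{c/|c|}, analytically equivalent to $D_{\hat\lambda,E/\lambda_2}$; the precise value of $\epsilon_1(\lambda)$ in \eqref{epsilon1} is engineered to be the logarithmic gap that makes this matching possible. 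Equating Lyapunov exponents then yields the required equality, and convexity plus $\omega(0)=0$ propagates vanishing throughout the closed strip.

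The main obstacle I expect is the algebraic matching in the last step: one must expand $\cos 2\pi(\theta+i\epsilon_1/(2\pi))=\tfrac12(e^{-\epsilon_1}e^{2\pi i\theta}+e^{\epsilon_1}e^{-2\pi i\theta})$, and likewise expand $c(\theta+i\epsilon_1/(2\pi))$ and $\tilde c(\theta-\alpha+i\epsilon_1/(2\pi))$, and recognize the resulting trigonometric polynomials as rescalings of the entries of $D_{\hat\lambda,E/\lambda_2}$ under the substitution $\lambda\mapsto(\lambda_3/\lambda_2,1/\lambda_2,\lambda_1/\lambda_2)$; the coboundary $f$ from Lemma \ref{c/|c|} is what absorbs the residual phase factors between $c_\lambda$ and $c_{\hat\lambda}$. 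This is essentially a bookkeeping computation, but getting the normalizations to cancel against $\epsilon_1(\lambda)$ in \eqref{epsilon1} is where the heart of the argument lies.
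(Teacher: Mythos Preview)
Your reduction in steps 1--4 is sound: the conjugacy from Lemma~\ref{conjugate} equates the two Lyapunov exponents on the strip, evenness plus convexity plus $\omega(0)=0$ reduces the problem to checking $\omega(\epsilon_1/2\pi)=0$, and the harmonicity of $\ln|c_\lambda|$ on $|\mathrm{Im}\,x|<\epsilon_2/2\pi$ makes the integral term constant in $\epsilon$. All of this is clean.

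The gap is step 5. There is no analytic (or even measurable) conjugacy, scalar or otherwise, between $D_{\lambda,E}(\cdot+i\epsilon_1/2\pi)$ and $D_{\hat\lambda,E/\lambda_2}$. Aubry duality is a unitary equivalence of direct integrals of operators (a Fourier-type transform exchanging $n$ and $\theta$); it does \emph{not} act as a conjugacy of transfer-matrix cocycles, and complexifying the phase does not turn one into the other. Concretely, the top-left entry of $D_{\lambda,E}(\theta+i\epsilon_1/2\pi)$ is $E-e^{-\epsilon_1}e^{2\pi i\theta}-e^{\epsilon_1}e^{-2\pi i\theta}$, an asymmetric, non-real trigonometric polynomial; no diagonal coboundary built from $f$ (or anything else) can convert this into the symmetric real entry $E/\lambda_2-2\cos 2\pi\theta$ of the dual while simultaneously matching the off-diagonal entries. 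The phrase ``the precise value of $\epsilon_1(\lambda)$ is engineered to make this matching possible'' is wishful: $\epsilon_1$ is determined by where the zeros of $c_\lambda$ lie, not by any such cocycle identity.

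What the paper does instead is a direct computation of both pieces of $\omega(\epsilon)=L(\alpha,D_{\lambda,E,\epsilon})-\int\ln|c_\lambda(\cdot+i\epsilon)|$ as functions of $\epsilon$. For the second term it factors $c_\lambda$ as a quadratic in $e^{2\pi i\theta}$ and evaluates the integral explicitly as a piecewise-linear function of $\epsilon$ (constant on the middle interval, slope $\pm 2\pi$ outside). For the first term it extracts the dominant exponential behavior of $D_{\lambda,E}(\theta+i\epsilon)$ as $\epsilon\to\pm\infty$ to get the asymptotic slope $\pm 2\pi$ and intercept; convexity and the quantized slopes from Avila's global theory then pin down $L(\alpha,D_{\lambda,E,\epsilon})$. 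Subtracting the two piecewise-linear functions gives $\omega\equiv 0$ on $|\epsilon|\le\epsilon_1/2\pi$. If you want to salvage your outline, replace the duality step by this direct asymptotic analysis of $L(\alpha,D_{\lambda,E,\epsilon})$.
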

\begin{proof}
$L(A(\cdot+i \epsilon))=L(D(\cdot +i\epsilon))-\int \ln{|c(x+i\epsilon)|}\mathrm{d}x$
\begin{align*}
D(x+i\epsilon)
&=\left(
\begin{matrix}
E-e^{2\pi i (x+i\epsilon)}-e^{-2\pi i (x+i\epsilon)}\ \                                                &-\lambda_1 e^{2\pi i (x-\frac{\alpha}{2}+i\epsilon)}-\lambda_2-\lambda_3 e^{-2\pi i (x-\frac{\alpha}{2}+i\epsilon)}\\
\lambda_1 e^{-2\pi i(x+\frac{\alpha}{2}+i\epsilon)}+\lambda_2+\lambda_3 e^{2\pi i (x+\frac{\alpha}{2}+i\epsilon)}\ \  &0
\end{matrix}
\right)\\
&=e^{2\pi \epsilon}
\left(
\begin{matrix}
-e^{2\pi i x}+o(1)\ \ & -\lambda_3 e^{-2\pi i (x-\frac{\alpha}{2})}+o(1)\\
\lambda_1 e^{-2\pi i (x+\frac{\alpha}{2})}+o(1)\ \ &0
\end{matrix}
\right).
\end{align*}
Thus the asymptotic behaviour of $L(D(\cdot+i\epsilon))$ is:
\begin{align*}
&L(D(\cdot+i\epsilon))=\ln{|\frac{1+\sqrt{1-4\lambda_1\lambda_3}}{2}|}+2\pi \epsilon\ \ \mathrm{when}\ \epsilon\rightarrow \infty,\\
&L(D(\cdot+i\epsilon))=\ln{|\frac{1+\sqrt{1-4\lambda_1\lambda_3}}{2}|}-2\pi \epsilon\ \ \mathrm{when}\ \epsilon\rightarrow -\infty.
\end{align*}
Then it suffices to calculate $\int \ln{|c(x+i\epsilon)|}\mathrm{d}x$ in region II. We have
\begin{align*}
&\int \ln{|c(x+i\epsilon)|}\mathrm{d} x\\
=&\ln{\lambda_3}-2\pi \epsilon+\int \ln{|e^{2\pi i x}-y_{1,\epsilon}|}+\int \ln{|e^{2\pi i x}-y_{2,\epsilon}|}.
\end{align*}
where $y_{1,\epsilon}=\frac{-\lambda_2+\sqrt{\lambda_2^2-4\lambda_1\lambda_3}}{2\lambda_3}e^{2\pi \epsilon}$ and $y_{2,\epsilon}=\frac{-\lambda_2-\sqrt{\lambda_2^2-4\lambda_1\lambda_3}}{2\lambda_3}e^{2\pi \epsilon}$.
\begin{align*}
\int \ln{|c(x+i\epsilon)|}\mathrm{d}x=
\left\lbrace
\begin{matrix}
2\pi \epsilon+\ln{\lambda_1}\ \ \ \ \ \ \ \ \ \ \ \ \ \ &\epsilon>\frac{1}{2\pi}\ln{\frac{\lambda_2+\sqrt{\lambda_2^2-4\lambda_1\lambda_3}}{2\lambda_1}},\\
\\
\ln{\frac{\lambda_2+\sqrt{\lambda_2^2-4\lambda_1\lambda_3}}{2}}\ \ \ \ &\frac{1}{2\pi}\ln{\frac{\lambda_2-\sqrt{\lambda_2^2-4\lambda_1\lambda_3}}{2\lambda_1}}\leq \epsilon\leq \frac{1}{2\pi}\ln{\frac{\lambda_2+\sqrt{\lambda_2^2-4\lambda_1\lambda_3}}{2\lambda_1}},\\
\\
-2\pi \epsilon+\ln{\lambda_3}\ \ \ \ \ \ \ \ \ \ \ \ \ &\epsilon<\frac{1}{2\pi}\ln{\frac{\lambda_2-\sqrt{\lambda_2^2-4\lambda_1\lambda_3}}{2\lambda_1}}.
\end{matrix}
\right.
\end{align*}
Thus $L(A(\cdot +i\epsilon))=0$ when $|\epsilon|\leq \frac{1}{2\pi}\ln{\frac{\lambda_2+\sqrt{\lambda_2^2-4\lambda_1\lambda_3}}{\max{(1,\lambda_1+\lambda_3)}+\sqrt{\max{(1,\lambda_1+\lambda_3)}^2-4\lambda_1\lambda_3}}}=\frac{\epsilon_1}{2\pi}$.

Since $\tilde{A}_{\lambda, E}(x+i\epsilon)=Q_{\lambda}(x+\alpha+i\epsilon)A_{\lambda, E}(x+i\epsilon)Q_{\lambda}^{-1}(x+i\epsilon)$, the statement about $\tilde{A}_{\lambda, E}$ is also true.
\end{proof} $\hfill{} \Box$

\section*{Acknowledgement}
I am deeply grateful to Svetlana Jitomirskaya for suggesting this problem and for many valuable discussions. 
This research was partially supported by the NSF DMS–1401204.

\bibliographystyle{amsplain}

\end{document}